\newcommand{\dsp}{\displaystyle}
\newcommand{\lbr}{\lbrack}
\newcommand{\rbr}{\rbrack}
\newcommand{\eps}{\varepsilon}
\newcommand{\om}{\omega}
\newcommand{\Om}{\Omega}
\newcommand{\mrm}[1]{\mathrm{#1}}
\newcommand{\mrB}{\mathrm{B}}
\newcommand{\bfx}{\boldsymbol{x}}
\newcommand{\bfn}{\boldsymbol{n}}
\newcommand{\bfxi}{\boldsymbol{\xi}}
\newcommand{\Cplx}{\mathbb{C}}
\newcommand{\N}{\mathbb{N}}
\newcommand{\R}{\mathbb{R}}
\renewcommand{\div}{\mrm{div}}
\newcommand{\mL}{\mrm{L}}
\newcommand{\mH}{\mrm{H}}
\newcommand{\mV}{\mrm{V}}
\newcommand{\mA}{\mrm{A}}
\newcommand{\mB}{\mrm{B}}
\renewcommand{\ker}{\mrm{ker}}
\newcommand{\coker}{\mrm{coker}}
\renewcommand{\dim}{\mrm{dim}}
\newcommand{\spec}{\mathfrak{S}}
\newtheorem{theorem}{Theorem}[section]
\newtheorem{lemma}{Lemma}[section]
\newtheorem{remark}{Remark}[section]
\newtheorem{corollary}{Corollary}[section]
\newtheorem{proposition}{Proposition}[section]
\newtheorem{Assumption}{Assumption}
\begin{document}

~\vspace{0.0cm}
\begin{center}
{\sc \bf\LARGE 
Spectrum of a diffusion operator with coefficient\\[4pt] changing
sign over a small inclusion}
\end{center}

\begin{center}
\textsc{Lucas Chesnel}$^1$, \textsc{Xavier Claeys}$^2$, \textsc{Sergei A. Nazarov}$^{3,\,4,\,5}$\\[16pt]
\begin{minipage}{0.91\textwidth}
{\small
$^1$ Centre de Math\'ematiques Appliqu\'ees, bureau 2029, \'Ecole Polytechnique, 91128 Palaiseau Cedex, France;\\
$^2$ Laboratory Jacques Louis Lions, University Pierre et Marie Curie, 4 place Jussieu, 75005 Paris, France; \\
$^3$ Faculty of Mathematics and Mechanics, St. Petersburg State University, Universitetsky prospekt, 28, 198504, Peterhof, St. Petersburg, Russia;\\
$^4$ Laboratory for mechanics of new nanomaterials, St. Petersburg State Polytechnical University, Polytekhnicheskaya ul, 29, 195251, St. Petersburg, Russia;\\
$^5$ Laboratory of mathematical methods in mechanics of materials, Institute of Problems of Mechanical Engineering, Bolshoj prospekt, 61, 199178, V.O., St. Petersburg, Russia;\\[10pt] 
E-mail: \texttt{chesnel@cmap.polytechnique.fr}, \texttt{claeys@ann.jussieu.fr}, \texttt{srgnazarov@yahoo.co.uk}\\[-14pt]
\begin{center}
(\today)
\end{center}
}
\end{minipage}
\end{center}
\vspace{0.4cm}

\noindent\textbf{Abstract.} 
We study a spectral problem $(\mathscr{P}^{\delta})$ for a diffusion like equation in a 3D domain $\Om$. The main originality lies in the presence 
of a parameter $\sigma^{\delta}$, whose sign changes on $\Om$, in the principal part of the operator we consider. More precisely, $\sigma^{\delta}$ 
is positive on $\Om$ except in a small inclusion of size $\delta>0$. Because of the sign-change of $\sigma^{\delta}$, for all $\delta>0$ the spectrum 
of $(\mathscr{P}^{\delta})$ consists of two sequences converging to $\pm\infty$. However, at the limit $\delta=0$, the small inclusion vanishes so 
that there should only remain positive spectrum for $(\mathscr{P}^{\delta})$. What happens to the negative spectrum? In this paper, we prove that 
the positive spectrum of $(\mathscr{P}^{\delta})$ tends to the spectrum of the problem without the small inclusion. On the other hand, we establish 
that each negative eigenvalue of $(\mathscr{P}^{\delta})$ behaves like $\delta^{-2}\mu$ for some constant $\mu<0$. We also show that the eigenfunctions 
associated with the negative eigenvalues are localized around the small inclusion. We end the article providing  2D numerical experiments 
illustrating these results.\\

\noindent\textbf{Key words.} Negative materials, small inclusion, plasmonics, metamaterial, sign-changing coefficients, eigenvalues, asymptotics, singular perturbation.

\section{Introduction} 
\noindent \begin{wrapfigure}{r}{0.48\textwidth}
\vspace{-0.2cm}\centering\includegraphics{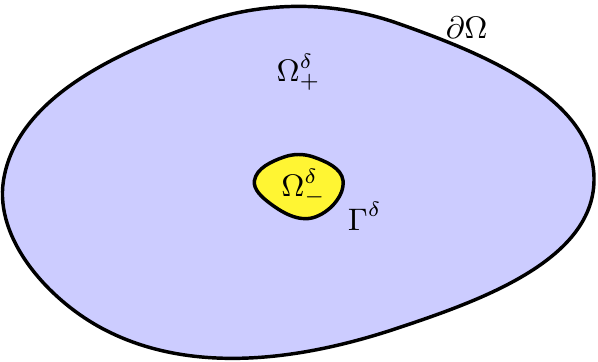}\vspace{-0.2cm}
\caption{Schematic view of the geometry.\label{GeneralGeom}}\vspace{-1.1cm}
\end{wrapfigure}
Let $\Om$ and $\Xi_-$ be three-dimensional domains, \textit{i.e.} bounded and connected open subsets of $\R^{3}$, with 
boundaries $\partial\Om$ and $\Gamma :=\partial\Xi_-$ that admit $\mathscr{C}^{\infty}$ regularity. Assume that $\Om$, $\Xi_-$ contain the 
origin $O$ and that there holds $\overline{\Xi_{-}}\subset\Om$. For $\delta\in(0;1]$, we introduce the sets (see Figure \ref{GeneralGeom})
$$
\begin{array}{lll}
\Om_{-}^{\delta} & := & \delta\,\Xi_{-}\\
\Om_{+}^{\delta} & := & \Om\setminus\overline{\Om_-^{\delta}}\\[3pt]
\Gamma^{\delta} & := & \partial \Om_{-}^{\delta}.
\end{array}
$$
Define $\sigma^{\delta}:\Omega\to\R$ by $\sigma = \sigma_{-}$ on $\overline{\Om\!\,^{\delta}_{-}}$ and 
$\sigma = \sigma_{+}$ on $\Om\!\,^{\delta}_{+}$, where $\sigma_{-}<0$ and 
$\sigma_{+}>0$ are constants. Throughout the paper, if $u$ is a measurable function on $\Om$, we shall denote 
$u_{\pm}:=u|_{\Om_{\pm}^{\delta}}$. For any open set $\om\subset \R^{3}$, the space  $\mL^{2}(\om)$ will 
refer to the set of square integrable functions defined on $\om$, equipped with the scalar product 
$(u,v)_{\om} =\int_{\om}u\overline{v}d\bfx$ and the norm $\Vert u\Vert_{\om} := \sqrt{(u,u)_{\om}}$. In the present article, we wish to study the following spectral problem  involving a Dirichlet boundary condition:
\begin{equation}\label{ExPb}
\begin{array}{|l}
\mbox{Find }(\lambda^{\delta},u^{\delta})\in\Cplx\times(\mH^{1}_{0}(\Om)\setminus\{0\})\mbox{ such that}\\[4pt]
-\div(\sigma^{\delta}\nabla u^{\delta}) = 
\lambda^{\delta}u^{\delta}\quad\mbox{ in }\Om.
\end{array}
\end{equation}
In Poblem (\ref{ExPb}), $\lambda^{\delta}$ is the spectral parameter. Moreover, $\mrm{H}^1_0(\Om)$ stands for the subspace of functions of the Sobolev space $\mrm{H}^1(\Om)$ 
vanishing on $\partial\Om$. It is endowed with the norm $\|u\|_{\mH^1_0(\Om)}:=\|\nabla u\|_{\Om}$. We regard $-\div(\sigma^{\delta}\nabla \cdot)$ as the unbounded operator $\mA^{\delta}:D(\mA^{\delta})\to \mL^{2}(\Omega)$ defined by 
\begin{equation}\label{ExOp}
\begin{array}{|l}
\mA^{\delta}\, v \;=\; -\mrm{div}(\sigma^{\delta}\nabla v)\\[6pt]
D(\mA^{\delta})\;:=\{v\in\mH^{1}_{0}(\Omega)\;\vert\; \mrm{div}(\sigma^{\delta}\nabla v)\in\mL^{2}(\Omega)\}.
\end{array}
\end{equation}
Since the interface $\Gamma^{\delta}=\partial \Om^{\delta}_{-}$ between the two subdomains is 
smooth, one can verify that when the contrast $\kappa_{\sigma} := \sigma_{-}/\sigma_{+}$ satisfies $\kappa_{\sigma}\neq -1$, $\mA^{\delta}$ fits the standard framework of \cite{LiMa68,RoSh63} for dealing with transmission problems. This leads to the following result (for a detailed discussion for this particular problem, see also \cite{Ramd99}).

\begin{proposition}\label{propoCompactResol}
Assume that $\kappa_{\sigma} = \sigma_{-}/\sigma_{+} \neq -1$. Then for all $\delta\in(0;1]$, the operator $\mA^{\delta}$ is densely defined, closed, self-adjoint 
and admits compact resolvent.
\end{proposition}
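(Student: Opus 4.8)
The plan is to reduce all four assertions to one structural fact about the variational operator and then to obtain self-adjointness and compactness by soft functional analysis. Introduce the bounded Hermitian sesquilinear form $a^{\delta}(u,v):=(\sigma^{\delta}\nabla u,\nabla v)_{\Om}$ on $\mV:=\mH^{1}_{0}(\Om)$; it is Hermitian because $\sigma^{\delta}$ is real-valued, but it is \emph{not} coercive, since $\sigma^{\delta}=\sigma_{-}<0$ on $\Om_{-}^{\delta}$ makes $a^{\delta}(u,u)$ sign-indefinite, so neither Lax--Milgram nor the usual form construction of a self-adjoint operator applies. The fact I would isolate is: \emph{the operator $L^{\delta}:=-\div(\sigma^{\delta}\nabla\,\cdot\,)$, seen from $\mH^{1}_{0}(\Om)$ into $\mH^{-1}(\Om)$, is Fredholm of index zero}. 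This is the only genuinely delicate point, and it is precisely here that the hypothesis $\kappa_{\sigma}\neq-1$ and the $\mathscr{C}^{\infty}$ regularity of $\Gamma^{\delta}$ enter; it is the complementing (Shapiro--Lopatinskii) condition for the transmission problem across $\Gamma^{\delta}$ within the framework of \cite{LiMa68,RoSh63} (see also \cite{Ramd99}).

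I would establish this Fredholm property by T-coercivity. Assume first $|\kappa_{\sigma}|<1$ (the case $|\kappa_{\sigma}|>1$ is symmetric, exchanging the roles of $\Om_{+}^{\delta}$ and $\Om_{-}^{\delta}$). Using the smoothness of $\Gamma^{\delta}$, fix a tubular neighborhood of $\Gamma^{\delta}$, a bounded ``reflection'' $\mathcal{R}$ across $\Gamma^{\delta}$ sending $\mH^{1}$ functions on its $\Om_{+}^{\delta}$ side to $\mH^{1}$ functions on its $\Om_{-}^{\delta}$ side and preserving traces on $\Gamma^{\delta}$, and a smooth cut-off $\chi$ equal to $1$ near $\Gamma^{\delta}$ and supported away from $\partial\Om$; then put $\mrm{T}u:=u$ on $\Om_{+}^{\delta}$ and $\mrm{T}u:=-u+2\chi\mathcal{R}u$ on $\Om_{-}^{\delta}$. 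Matching of traces makes $\mrm{T}u\in\mH^{1}_{0}(\Om)$, and $\mrm{T}$ is a bounded bijection of $\mV$. Expanding, $a^{\delta}(u,\mrm{T}u)=\sigma_{+}\|\nabla u\|_{\Om_{+}^{\delta}}^{2}+|\sigma_{-}|\,\|\nabla u\|_{\Om_{-}^{\delta}}^{2}$ plus cross terms; flattening the interface in local coordinates shows that $\mathcal{R}$ contributes a constant equal to $1$ up to curvature factors of lower order, so a Young inequality absorbs the cross terms because $|\sigma_{-}|<\sigma_{+}$ --- this is where $\kappa_{\sigma}\neq-1$ is used --- at the cost of a lower-order remainder $\beta\|u\|_{\Om}^{2}$ produced by $\chi$. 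The outcome is a T-coercivity (G\r{a}rding-type) estimate $\mrm{Re}\,a^{\delta}(u,\mrm{T}u)\geq\alpha\|u\|_{\mV}^{2}-\beta\|u\|_{\Om}^{2}$ for all $u\in\mV$, with $\alpha>0$, $\beta\geq0$. Passing to Riesz representations on $\mV$, with $\mathbb{A}$ representing $a^{\delta}$ and $\mathbb{K}$ representing $\beta(\,\cdot\,,\,\cdot\,)_{\Om}$ --- the latter compact by the Rellich embedding $\mV\hookrightarrow\mL^{2}(\Om)$ ($\Om$ bounded) --- the estimate reads $\mrm{Re}\,\bigl((\mrm{T}^{\ast}\mathbb{A}+\mathbb{K})u,u\bigr)_{\mV}\geq\alpha\|u\|_{\mV}^{2}$, so $\mathbb{B}:=\mrm{T}^{\ast}\mathbb{A}+\mathbb{K}$ is an isomorphism of $\mV$ and $\mathbb{A}=(\mrm{T}^{\ast})^{-1}\mathbb{B}-(\mrm{T}^{\ast})^{-1}\mathbb{K}$ is an isomorphism minus a compact operator, hence Fredholm of index $0$; equivalently $L^{\delta}:\mH^{1}_{0}(\Om)\to\mH^{-1}(\Om)$ is Fredholm of index $0$, and then so is $L^{\delta}-z$ for every $z\in\Cplx$, being a compact perturbation (again by Rellich). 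I expect this to be essentially the whole difficulty; the rest is routine.

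To conclude: $\mA^{\delta}$ is densely defined because $\mathscr{C}^{\infty}_{0}(\Om\setminus\Gamma^{\delta})\subset D(\mA^{\delta})$ --- on such $v$, $\sigma^{\delta}$ is locally constant, so $\div(\sigma^{\delta}\nabla v)=\sigma^{\delta}\Delta v\in\mL^{2}(\Om)$ --- and this set is dense in $\mL^{2}(\Om)$ since $\Gamma^{\delta}$ is Lebesgue-negligible. It is symmetric: testing against $\mathscr{C}^{\infty}_{0}(\Om)$ and using density yields $(\mA^{\delta}u,v)_{\Om}=a^{\delta}(u,v)$ for all $u\in D(\mA^{\delta})$, $v\in\mH^{1}_{0}(\Om)$, hence $(\mA^{\delta}u,v)_{\Om}=a^{\delta}(u,v)=\overline{a^{\delta}(v,u)}=(u,\mA^{\delta}v)_{\Om}$ for $u,v\in D(\mA^{\delta})$; in particular all eigenvalues of $\mA^{\delta}$ are real. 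Consequently, for non-real $z$ the operator $L^{\delta}-z:\mH^{1}_{0}(\Om)\to\mH^{-1}(\Om)$ is an isomorphism: being Fredholm of index $0$ it is enough to check injectivity, and if $L^{\delta}u=zu$ in $\mH^{-1}(\Om)$ with $u\in\mH^{1}_{0}(\Om)\setminus\{0\}$ then $\div(\sigma^{\delta}\nabla u)=-zu\in\mL^{2}(\Om)$, so $u\in D(\mA^{\delta})$ and $z$ would be a non-real eigenvalue of $\mA^{\delta}$ --- impossible. Applying this with $z=\pm i$: for $f\in\mL^{2}(\Om)\subset\mH^{-1}(\Om)$, the element $u:=(L^{\delta}-z)^{-1}f\in\mH^{1}_{0}(\Om)$ satisfies $\div(\sigma^{\delta}\nabla u)=zu-f\in\mL^{2}(\Om)$, hence $u\in D(\mA^{\delta})$ and $(\mA^{\delta}-z)u=f$; thus $\mrm{ran}(\mA^{\delta}-i)=\mrm{ran}(\mA^{\delta}+i)=\mL^{2}(\Om)$, and the basic self-adjointness criterion (a densely defined symmetric operator $\mathcal{A}$ with $\mrm{ran}(\mathcal{A}-i)=\mrm{ran}(\mathcal{A}+i)=\mL^{2}(\Om)$ is self-adjoint) shows that $\mA^{\delta}$ is self-adjoint, in particular closed. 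Finally $(\mA^{\delta}-i)^{-1}$ is the restriction of $(L^{\delta}-i)^{-1}$ to $\mL^{2}(\Om)$, which maps $\mL^{2}(\Om)$ boundedly into $\mH^{1}_{0}(\Om)$; composing with the compact embedding $\mH^{1}_{0}(\Om)\hookrightarrow\mL^{2}(\Om)$ shows $(\mA^{\delta}-i)^{-1}$ is compact on $\mL^{2}(\Om)$, i.e.\ $\mA^{\delta}$ has compact resolvent. The argument is unchanged for every $\delta\in(0;1]$, since $\Gamma^{\delta}=\delta\Gamma$ stays smooth and $\kappa_{\sigma}$ is independent of $\delta$.
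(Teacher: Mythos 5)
Your proof is correct, and it takes a genuinely different route from the one the paper invokes. The paper does not prove Proposition \ref{propoCompactResol} itself; it states that for smooth $\Gamma^{\delta}$ and $\kappa_{\sigma}\neq -1$ the operator $\mA^{\delta}$ ``fits the standard framework of \cite{LiMa68,RoSh63}'' (the classical Shapiro--Lopatinskii / pseudodifferential treatment of transmission problems), and points to \cite{Ramd99} for details. You instead give a self-contained variational argument via T-coercivity, which is the approach of \cite{BoCZ10,BoCC12} that the paper itself uses elsewhere (for the near-field operator and the source problem, but not for this proposition). Concretely, you identify the one nontrivial ingredient --- that $L^{\delta}:\mH^{1}_{0}(\Om)\to\mH^{-1}(\Om)$ is Fredholm of index $0$ --- and you prove it by constructing an explicit involution $\mrm{T}$ yielding a G\aa rding-type inequality modulo a compact remainder; the rest (density via $\mathscr{C}^{\infty}_{0}(\Om\setminus\Gamma^{\delta})$, symmetry, the basic criterion $\mrm{ran}(\mA^{\delta}\pm i)=\mL^{2}(\Om)$, and compactness of the resolvent from Rellich) is elementary and sound. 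What your route buys over the paper's is self-containedness at the cost of the local-flattening construction of $\mrm{T}$ near the curved interface, which you sketch correctly but would need to expand to be fully rigorous (as in \cite{BoCC12}); what the paper's route buys is brevity, by outsourcing the ellipticity check to an established reference. Two small slips worth noting but not affecting correctness: the line $\div(\sigma^{\delta}\nabla u)=zu-f$ should read $\div(\sigma^{\delta}\nabla u)=-zu-f$ (both lie in $\mL^{2}(\Om)$, so your conclusion $u\in D(\mA^{\delta})$ stands), and for the concrete numerical regime of Section \ref{Numerics} ($\kappa_{\sigma}=-2.5$) the relevant case is $\vert\kappa_{\sigma}\vert>1$, so the symmetric variant with $\mathcal{R}$ acting from $\Om_{-}^{\delta}$ to $\Om_{+}^{\delta}$ is the one actually used there.
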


\noindent Therefore, for a fixed $\delta>0$, we can study the spectrum of $\mA^{\delta}$. Because $\sigma^{\delta}$ changes sign on $\Om$, this spectrum is not bounded from below nor from above. More precisely, we have the following results (see \cite{Ramd99} and \cite{BoRa02}).

\begin{proposition}\label{propoSptDesc}
Assume that $\kappa_{\sigma}\neq -1$. Then for all $\delta\in(0;1]$, the spectrum of $\mA^{\delta}$ consists in two sequences, one nonnegative and 
one negative, of real eigenvalues of finite multiplicity: 
\begin{equation}\label{spectrum delta}
\dots \lambda_{-n}^{\delta}\leq \dots\leq 
\lambda_{-1}^{\delta}< 0 \leq \lambda_{1}^{\delta}\leq \lambda_{2}^{\delta}\leq \dots 
\leq \lambda_{n}^{\delta} \dots\ .
\end{equation}
In the sequences above, the numbering is chosen so that each eigenvalue is repeated 
according to its multiplicity. Moreover, there holds $\dsp\lim_{n\to+\infty}\lambda_{\pm n}^{\delta}=\pm\infty$.
\end{proposition}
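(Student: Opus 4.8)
My plan is to deduce the statement from Proposition~\ref{propoCompactResol} by exhibiting, inside each of the two subdomains, arbitrarily negative (resp.\ arbitrarily positive) values of the quadratic form on test functions that genuinely belong to $D(\mA^\delta)$. First I would observe that, $\mL^2(\Om)$ being infinite dimensional and $\mA^\delta$ being self-adjoint with compact resolvent, $\spec(\mA^\delta)$ is an infinite set of real eigenvalues of finite multiplicity with no finite accumulation point. Hence it suffices to prove that for every $t>0$ the operator $\mA^\delta$ has infinitely many eigenvalues larger than $t$ and infinitely many eigenvalues smaller than $-t$: the nonnegative ones then arrange into a nondecreasing sequence tending to $+\infty$ and the negative ones into a nonincreasing sequence tending to $-\infty$, which is exactly (\ref{spectrum delta}). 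I will write $(\mA^\delta u,u)_\Om=(\sigma^\delta\nabla u,\nabla u)_\Om$ for $u\in D(\mA^\delta)$ (integrating by parts, the boundary term vanishing since $u\in\mH^1_0(\Om)$), and denote by $E$ the spectral measure of $\mA^\delta$.

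The abstract ingredient I would use is elementary: if $L\subset D(\mA^\delta)$ is infinite dimensional and $(\mA^\delta u,u)_\Om\le -t\,\|u\|^2_\Om$ for all $u\in L$, then $\dim\mathrm{Ran}\,E((-\infty;-t])=+\infty$, and symmetrically with $\ge t$ and $E([t;+\infty))$. Indeed, if the orthogonal projection $P:=E((-t;+\infty))$ had finite-dimensional complement, then $L\cap\mathrm{Ran}\,P$ would still be infinite dimensional and contained in $D(\mA^\delta)$, while the spectral theorem forces $(\mA^\delta u,u)_\Om>-t\,\|u\|^2_\Om$ for every nonzero $u\in\mathrm{Ran}\,P\cap D(\mA^\delta)$, a contradiction. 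Combined with finiteness of multiplicities, this yields infinitely many eigenvalues below $-t$ (resp.\ above $t$).

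It then remains to build such subspaces $L$. The key point is that $\sigma^\delta$ is locally constant on the open set $\Om_-^\delta$, so that $\mathscr{C}^\infty_0(\Om_-^\delta)\subset D(\mA^\delta)$: extending $u\in\mathscr{C}^\infty_0(\Om_-^\delta)$ by zero gives a function in $\mathscr{C}^\infty_0(\Om)\subset\mH^1_0(\Om)$ with $\div(\sigma^\delta\nabla u)=\sigma_-\Delta u\in\mL^2(\Om)$, and on such $u$ one has $(\mA^\delta u,u)_\Om=\sigma_-\|\nabla u\|^2_\Om$. Given $t>0$, I would fix $\phi\in\mathscr{C}^\infty_0(B(O;1))\setminus\{0\}$ and $\rho_0>0$ so small that every rescaled bump $x\mapsto\phi(x/\rho)$ with $\rho\le\rho_0$ has Rayleigh quotient $\rho^{-2}\|\nabla\phi\|^2_{\R^3}/\|\phi\|^2_{\R^3}\ge t/|\sigma_-|$, then place translated copies $\psi_k$ ($k\in\N$) of such bumps in a sequence of pairwise disjoint balls of radius $\le\rho_0$ shrinking towards an interior point of $\Om_-^\delta$. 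Disjointness of supports makes the Rayleigh quotient of $u=\sum_k c_k\psi_k$ a convex average of those of the $\psi_k$, so $\|\nabla u\|^2_\Om\ge(t/|\sigma_-|)\|u\|^2_\Om$, hence $(\mA^\delta u,u)_\Om\le -t\,\|u\|^2_\Om$ on the infinite dimensional space $L=\mathrm{span}\{\psi_k\}$. The same construction carried out in the open set $\Om_+^\delta$, where $\sigma^\delta=\sigma_+>0$, produces for each $t>0$ an infinite dimensional $L\subset\mathscr{C}^\infty_0(\Om_+^\delta)\subset D(\mA^\delta)$ with $(\mA^\delta u,u)_\Om\ge t\,\|u\|^2_\Om$. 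Feeding these into the abstract ingredient finishes the proof; the value $0$ may or may not be an eigenvalue, but in any case it has finite multiplicity, in accordance with the $\le$ sign in (\ref{spectrum delta}).

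The step I expect to carry the real content (and the reason one cannot simply invoke the usual min--max principle) is that, because $\sigma^\delta$ changes sign, the form $u\mapsto(\sigma^\delta\nabla u,\nabla u)_\Om$ is not semibounded on $\mH^1_0(\Om)$ and $\mA^\delta$ is not bounded from below; moreover one must not use arbitrary $\mH^1$ functions supported in $\overline{\Om^\delta_\pm}$ as trial functions, since such functions generically violate the transmission condition $\sigma_+\partial_n u_+=\sigma_-\partial_n u_-$ on $\Gamma^\delta$ and hence fail to lie in $D(\mA^\delta)$. Restricting to $\mathscr{C}^\infty_0$ of the \emph{open} subdomains, where $\sigma^\delta$ is constant, is precisely what keeps the trial functions in the operator domain and renders the projection argument legitimate. (Alternatively, this is a classical fact and one may simply cite \cite{Ramd99,BoRa02}.)
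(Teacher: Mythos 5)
Your argument is correct. The paper itself gives no proof of Proposition~\ref{propoSptDesc}: it simply refers to the references \cite{Ramd99,BoRa02}, so there is no in-paper argument to compare against. Your self-contained Weyl-type construction is a standard and valid way to establish the result: you correctly observe that self-adjointness plus compact resolvent (Proposition~\ref{propoCompactResol}) already forces the spectrum to be a discrete set of real eigenvalues of finite multiplicity with no finite accumulation point, and that it then suffices, for every $t>0$, to exhibit an infinite-dimensional subspace $L\subset D(\mA^{\delta})$ on which the quadratic form $(\sigma^{\delta}\nabla u,\nabla u)_{\Om}$ is $\leq -t\|u\|^{2}_{\Om}$, and another one on which it is $\geq t\|u\|^{2}_{\Om}$; the projection/spectral-measure argument you give (if $E((-\infty;-t])$ had finite rank, then $L\cap\mathrm{Ran}\,E((-t;+\infty))$ would be a nonzero subspace of $D(\mA^{\delta})$ contradicting the spectral theorem) is sound, including the strict inequality $(\mA^{\delta}u,u)_{\Om}>-t\|u\|^{2}_{\Om}$ for nonzero $u$ in the range of $E((-t;+\infty))$. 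The construction of $L$ via disjointly supported rescaled bumps in $\mathscr{C}^{\infty}_{0}(\Om^{\delta}_{\mp})$ is clean precisely because such functions lie in $D(\mA^{\delta})$ without any concern about the transmission condition on $\Gamma^{\delta}$, and the Rayleigh-quotient scaling $\rho^{-2}\|\nabla\phi\|^{2}/\|\phi\|^{2}$ is computed correctly, so that the span has the required sign on the form. Your closing remark correctly identifies why the usual min--max machinery for semibounded operators cannot be invoked directly here and why one must keep the trial functions inside the operator domain.
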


\noindent In Proposition \ref{propoCompactResol}, the assumption $\kappa_{\sigma}\neq-1$ is important and in the sequel, we should not depart from it. The case $\kappa_{\sigma}=-1$ is rather pathological and is beyond the scope of the present article.\\
\newline
Proposition \ref{propoSptDesc} indicates that for all  $\delta\in(0;1]$, $\spec(\mA^{\delta})$ (the spectrum of $\mA^{\delta}$) contains a sequence of eigenvalues which tends to $-\infty$. On the other hand, when $\delta$ goes to zero, the small inclusion vanishes so that the parameter $\sigma^{\delta}$ 
becomes strictly positive at the limit $\delta=0$. As a consequence, one could expect to obtain only positive spectrum for the operator $\mA^{\delta}$ when $\delta\to0$. The question we want to answer in this paper can be formulated as follows: what happens to the 
negative spectrum of $\mA^{\delta}$ when the small inclusion shrinks? \\
\newline
Problems of small inclusions or small holes have a long history in asymptotic analysis. The case where $\Om_{-}^{\delta}$  is a hole (with Dirichlet of Neumann condition 
on $\partial\Om_{-}^{\delta}$) in 2D/3D has been studied in detail in \cite{Na61,MaNP00} (see also the references therein). The configuration where $\Om_{-}^{\delta}$ contains a positive material with a concentrated mass has been investigated in \cite{SaPa84,OlSY91,Naza93,CaPV07} (see also the review \cite{LoPe03}). In this context,
the asymptotics of eigenpairs of elliptic operators has been considered in \cite{Na56,Na61,Na108}. 

\quad\\
A remarkable feature of Problem (\ref{ExPb}) is the change of sign of the parameter appearing in the principal part of the operator $\mA^{\delta}$. This is what makes it non-standard. In \cite{Na462,Na464,Na473}, the authors have examined the asymptotics of eigenpairs in situations where a sign changing coefficient arises in the compact part of the spectral problem under study. Yet, to our knowledge, asymptotics of the eigenpairs with a sign changing coming into play in the principal part of the operator has never been considered before.

\quad\\
The motivation for investigating Problem (\ref{ExPb}) comes from electromagnetics and in particular from 
the so-called \textit{surface plasmon polaritons}  as well as  \textit{metamaterials} that both offer a wide range of new technological perspectives. The surface plasmon polaritons are waves which 
propagate at the interface between a metal and a classical dielectric in the visible range. They appear because 
at optical frequencies, neglecting dissipation effects, the permittivity of a metal can be negative \cite{Ordal:83,BaDE03,CLMB05,ZaSM05,GrBo10}.  
The metamaterials (see \cite{Anan05} for an overview) are  artificial materials, made of small resonators periodically arranged so as to obtain macroscopic media with exotic permittivity $\eps$ and/or 
permeability $\mu$. In this field, one of the goals consists in achieving negative $\eps$ and/or $\mu$. We emphasize that in the present article, we consider a problem involving a material such that only one physical parameter ($\eps$ or $\mu$) takes negative values. When additionally there is a sign changing function in the right hand side of Equation (\ref{ExPb}), complex spectrum can appear and the analysis we propose does not apply.

\quad\\
The outline of the paper is the following. In Section \ref{SectionLimitProblems}, we start by presenting the features of the two 
limit operators $\mA^{0}$ and $\mB^{\infty}$ (see their definition in (\ref{FFOp}) and (\ref{NFOp})) which appear naturally in the 
study of the spectrum of $\mA^{\delta}$ when $\delta$ tends to zero. In Section \ref{SectionSourceTermpb}, we give an asymptotic 
expansion of the solution of the source term problem associated with (\ref{ExPb}) as $\delta\to0$.  In order to justify this 
asymptotic expansion, we establish uniform boundedness of the inverse of $\mA^{\delta}$ in terms of weighted norms involving the 
small parameter $\delta$. The proof of this important result is difficult because of the change of sign of the parameter $\sigma^{\delta}$. 
Our approach is based on the technique of overlapping cut-off functions introduced in \cite[Chap 2]{MaNP00}, \cite{Naza99}. Then, 
we make use of this uniform boundedness result in order to show that $(\mA^{\delta})^{-1}$ converges strongly to $(\mA^{0})^{-1}$ in 
the operator norm as $\delta\to 0$. This allows to prove directly, in Section \ref{StudyPositiveSpec}, that the positive part of 
the spectrum of $\mA^{\delta}$ converges to the spectrum of $\mA^{0}$. This will also imply that the negative eigenvalues of 
$\mA^{\delta}$ all diverge to $-\infty$. Section \ref{negativeSpectrum} then focuses on a sharper study of the negative part of 
the spectrum of $\mA^{\delta}$. In particular, we show that all negative eigenvalues admit a behaviour of the form $\delta^{-2}\mu$, 
with $\mu<0$. In the last section, we illustrate these theoretical results with 2D numerical experiments. The two main results 
of the paper, respectively for the positive and negative spectrum of $\mA^{\delta}$, are formulated in Theorem \ref{thmMajorPos} 
and Theorem \ref{NegSpectConv}.

\section{Limit problems}\label{SectionLimitProblems}
\noindent In the sequel, we will provide an asymptotic expansion of the eigenpairs of Problem (\ref{ExPb}) as $\delta$ tends to zero. This asymptotic expansion will involve the spectral parameters of some operators associated with two limit problems independent of $\delta$. The goal of the present section is to introduce these operators and to provide their main features. Before proceeding further, we introduce a set of cut-off functions which will be useful in our analysis. Let $\psi$ and $\chi$ be two elements of  $\mathscr{C}^{\infty}(\R,[0;1])$ such that
\begin{equation}\label{definition cut-off functions}
\psi(r)+\chi(r)=1,\qquad\psi(r)=1\ \mbox{ for }r\le1,\qquad \mbox{ and }\qquad\psi(r)=0\ \mbox{ for }r\ge2.
\end{equation}
For $t>0$, we shall denote $\psi_t$ and $\chi_t$ the functions (see Figure \ref{cut-off functions}) such that 
\begin{equation}\label{definition cut-off functions weigthed}
\psi_t(r)=\psi(r/t)\qquad\mbox{ and }\qquad\chi_t(r)=\chi(r/t).
\end{equation}

\begin{figure}[!ht]
\centering\includegraphics{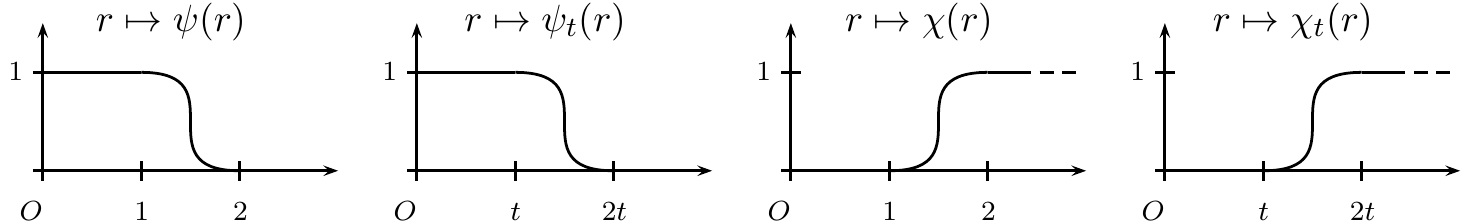}\vspace{-0.2cm}
\caption{Cut-off functions.\label{cut-off functions}}
\end{figure}
\noindent In order to simplify the presentation and without restriction, we shall assume that the domains $\Om$ and $\Xi_{-}$ are such that 
$\overline{\mrm{B}(O,2)}\subset\Om$, $\Xi_{-}\subset\overline{\mrm{B}(O,1)}$ so that the support of $\psi$ is included in $\Om$ and so that 
$\psi=1$ on $\Xi_{-}$. Here and throughout the paper, $\mrm{B}(O,d)$ denotes the open ball of $\R^3$, centered at $O$ and of radius $d>0$. 
If the domains $\Om$, $\Xi_{-}$ do not satisfy this assumption, we modify the cut-off functions accordingly.
\subsection{Far field operator}
\noindent As $\delta$ tends to zero, the small inclusion of negative material disappears. In other words, there holds $\sigma^{\delta}\to\sigma_{+}$ a.e. 
in $\Om$. This leads us to introduce the \textit{far field} operator $\mA^{0}:D(\mA^{0})\to \mL^{2}(\Omega)$ such that 
\begin{equation}\label{FFOp}
\begin{array}{|l}
\mA^{0} v \;=\; -\sigma_+\Delta v\\[6pt]
D(\mA^{0})\;:=\{v\in\mH^{1}_{0}(\Omega)\;\vert\; \Delta v\in\mL^{2}(\Omega)\}.
\end{array}
\end{equation}
Since $\partial\Om$ is smooth, $D(\mA^{0})$ coincides with $\mH^2(\Om)\cap\mH^1_0(\Om)$. 
Moreover, the operator $\mA^{0}$ is bijective from $D(\mA^{0})$ to $\mL^{2}(\Omega)$ and its spectrum forms a 
discrete sequence of eigenvalues:
\begin{equation}\label{eigenvalue bounded}
\begin{array}{l}
\mathfrak{S}(\mA^{0}) = \{\mu_{n}\}_{n\geq 1}\qquad\textrm{with}\qquad 0<\mu_1 <\mu_2\le\cdots\le\mu_n\ldots\underset{n\to+\infty}{\to}+\infty.
\end{array}
\end{equation}
In (\ref{eigenvalue bounded}), each eigenvalue is repeated according to its multiplicity. On the other hand, if $v_n$ is an eigenfunction associated with the eigenvalue $\mu_n$, then $v_n\in\mathscr{C}^{\infty}(\overline{\Om})$ (at least when $\partial\Om$ is of class $\mathscr{C}^{\infty}$).\\
\newline
In the sequel, we shall need sharp estimates for the behaviour of the eigenfunctions of $\mA^{0}$ at $O$. Following \cite{Kond67}, we shall express 
them in weighted norms. Let $\mathscr{C}^{\infty}_{0}(\overline{\Omega}\setminus\{O\})$ refer to the set of infinitely differentiable functions supported 
in $\overline{\Omega}\setminus\{O\}$. For $\beta\in \R$ and $k\geq 0$, we define the Kondratiev space $\mV^{k}_{\beta}(\Omega)$ as the completion of 
$\mathscr{C}^{\infty}_0(\overline{\Omega}\setminus\{O\})$ in the norm 
\begin{equation}\label{def norme poids ini}
\Vert v\Vert_{\mV^{k}_{\beta}(\Omega)} := \Big(\sum_{|\alpha|\le k}\int_{\Omega}
r^{2(\beta+\vert\alpha\vert-k)}\vert\partial^{\alpha}_{\bfx} v\vert^{2}\;d\bfx\;\Big)^{1/2}.
\end{equation}
In (\ref{def norme poids ini}), $r=|\bfx|$ denotes the distance to the origin $O$. We define the spaces $\mV^k_{\beta}(\Om_{\pm}^{\delta})$ 
like $\mV^k_{\beta}(\Om)$, replacing $\Om$ by $\Om_{\pm}^{\delta}$ in (\ref{def norme poids ini}). Of course, since $O\notin\Om^{\delta}_{+}$, 
the space $\mV^k_{\beta}(\Om_{+}^{\delta})$ coincides with $\mH^k(\Om_{+}^{\delta})$ for all $\beta\in\R$. To deal with homogeneous Dirichlet boundary condition, we shall consider functions belonging to the space $\mathring{\mV}^{1}_{\beta}(\Omega):=\{ v\in \mV^{1}_{\beta}(\Omega)
\;\vert\;v = 0\mbox{ on }\partial\Omega \}$. Then, we introduce the continuous operators 
$\mathcal{A}_{\beta}:D(\mathcal{A}_{\beta})\to \mV^0_{\beta}(\Om)$ such that
\begin{equation}\label{def op far field}
\begin{array}{|l}
\mathcal{A}_{\beta}v=-\sigma_+\Delta v\\[6pt]
D(\mathcal{A}_{\beta}):=\mV^2_{\beta}(\Om)\cap\mathring{\mV}^1_{\beta-1}(\Om).
\end{array}
\end{equation}
The proof of the following classical result can be found in the original paper \cite{Kond67} or, for example, in \cite[Chap 2]{NaPl94}, \cite[Chap 1]{MaNP00}.
\begin{proposition}\label{propoLaplaceBounded}
If $\beta\in(1/2;3/2)$ then $\mathcal{A}_{\beta}$ is an isomorphism. 
\end{proposition}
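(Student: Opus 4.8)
The plan is to establish the isomorphism property via the standard Kondratiev machinery: the operator $\mathcal{A}_\beta$ is Fredholm of index zero for all $\beta$ outside a discrete set of ``forbidden'' weights, and its kernel and cokernel are trivial precisely when $\beta$ lies in the critical strip $(1/2;3/2)$. First I would recall the local model: near the origin, after freezing coefficients, the operator $-\sigma_+\Delta$ in $\R^3$ acts on homogeneous functions $r^\gamma Y(\theta)$, and the relevant pencil is $-\sigma_+(\gamma(\gamma+1)+\Delta_{\mathbb{S}^2})$. Its spectrum (the set of $\gamma$ for which there is a nontrivial spherical harmonic solution) is $\{\gamma \in \Z : \gamma \geq 0\} \cup \{\gamma \in \Z : \gamma \leq -1\}$, i.e. $\gamma = \ell$ and $\gamma = -\ell-1$ for $\ell\in\N_0$. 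The weighted space $\mV^2_\beta(\Om)$ detects no new behaviour exactly when the line $\mathrm{Re}\,\gamma = 3/2 - \beta$ (the shift convention coming from the $d\bfx = r^2\,dr\,d\theta$ in 3D and the definition \eqref{def norme poids ini}) avoids this spectrum; this happens iff $3/2-\beta\notin\Z$, and in particular for $\beta\in(1/2;3/2)$ the line sits strictly between the eigenvalues $\gamma = 0$ and $\gamma = -1$, which is the admissible strip. By the Kondratiev theory (applicable since $\partial\Om$ is smooth and the coefficient $\sigma_+$ is constant hence smooth up to $O$), $\mathcal{A}_\beta$ is then Fredholm; and since the index is constant on each connected component of admissible weights and equals zero on the ``central'' strip by symmetry of the operator pencil, $\ind\mathcal{A}_\beta = 0$ for $\beta\in(1/2;3/2)$.

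It then remains to check injectivity and surjectivity; by the index-zero property it suffices to prove $\ker\mathcal{A}_\beta = \{0\}$. Suppose $v\in D(\mathcal{A}_\beta)$ with $-\sigma_+\Delta v = 0$ in $\Om$ and $v = 0$ on $\partial\Om$. For $\beta < 3/2$ the space $\mV^2_\beta(\Om)$ embeds into $\mH^1_{\mathrm{loc}}$ near $O$ with $\nabla v \in \mL^2$ locally (one checks the weight exponent $2(\beta-1)$ against $r^2\,dr$, giving integrability since $\beta > 1/2$), so the origin is a removable singularity and $v\in\mH^1_0(\Om)$ is a genuine weak solution; integrating by parts, $\sigma_+\|\nabla v\|^2_\Om = 0$, hence $v\equiv 0$. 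The embedding $D(\mathcal{A}_\beta)\hookrightarrow \mathring{\mV}^1_{\beta-1}(\Om)$ built into the definition is what licenses this integration by parts cleanly, since $\beta-1\in(-1/2;1/2)$ puts us in the energy range. Surjectivity is then automatic from $\ind = 0$ and $\coker\mathcal{A}_\beta \cong \ker\mathcal{A}_{3-\beta}^\ast$, which vanishes for the same reason as $3-\beta$ also lies in $(3/2;5/2)$... here one uses instead the duality pairing $\mV^0_\beta \times \mV^0_{-\beta}$ and the fact that the adjoint weight $-\beta + 3$ (accounting for the $3$-dimensional duality shift) again falls outside the pencil spectrum, so the adjoint homogeneous problem has only the trivial solution by the same energy argument.

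The main obstacle, and the only place requiring genuine care rather than bookkeeping, is getting the index-zero conclusion right: one must verify that the operator pencil associated with $-\sigma_+\Delta$ has no eigenvalue on the line $\mathrm{Re}\,\gamma = 3/2-\beta$ for $\beta$ in the stated interval and correctly track the shift conventions relating the weight $\beta$ in \eqref{def norme poids ini} to the position of that line — an off-by-one here would move the admissible strip. Once the pencil analysis is pinned down, everything else is the routine Fredholm-plus-energy argument, and this is precisely why the statement is attributed to \cite{Kond67} with the computation deferred to \cite[Chap 2]{NaPl94} and \cite[Chap 1]{MaNP00}; I would simply cite those for the pencil spectrum of the Laplacian and assemble the pieces above.
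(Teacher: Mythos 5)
The paper gives no proof of Proposition~\ref{propoLaplaceBounded}: it simply cites \cite{Kond67}, \cite[Chap 2]{NaPl94}, \cite[Chap 1]{MaNP00}. Your sketch follows the standard Kondratiev route those references take (operator pencil, Fredholmness away from the critical set, index computation via duality, kernel triviality by an energy/removable-singularity argument), which is the right framework. However, two of your bookkeeping steps are off in a way that matters.

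First, you write the critical line as $\mathrm{Re}\,\gamma = 3/2-\beta$ but then conclude it sits between $\gamma=-1$ and $\gamma=0$; these are inconsistent. With the norm \eqref{def norme poids ini}, a function behaving like $r^{\gamma}Y(\theta)$ near $O$ belongs to $\mV^{2}_{\beta}(\Om)$ locally iff $\gamma>1/2-\beta$ (one integrates $r^{2\beta-2k+2\gamma}\cdot r^{2}\,dr$ with $k=2$), so the correct threshold is $\gamma=1/2-\beta$, which does lie in $(-1;0)$ for $\beta\in(1/2;3/2)$. You flagged the off-by-one risk yourself, but your written formula is the wrong one.

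Second, and more seriously, your adjoint weight is wrong. You identify $\coker\,\mathcal{A}_{\beta}$ with $\ker$ of an operator at weight $3-\beta$ and argue that triviality follows because $3-\beta\in(3/2;5/2)$. But the dual of $\mV^{0}_{\beta}(\Om)$ under the $\mL^{2}$ pairing is $\mV^{0}_{-\beta}(\Om)$, and a harmonic element of $\mV^{0}_{-\beta}$ has the same admissible asymptotics as an element of $\mV^{2}_{2-\beta}$ (both give the threshold $\gamma>\beta-3/2$). So the correct reflection is $\beta\mapsto 2-\beta$ — this is exactly what the paper uses later in the proof of Proposition~\ref{proposition Fredholm Outter} — and it preserves the interval $(1/2;3/2)$. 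Your weight $3-\beta$ lands in $(3/2;5/2)$, and there $\ker\mathcal{A}_{3-\beta}$ is \emph{not} trivial: the Dirichlet Green's function with pole at $O$, which behaves like $1/r$ near $O$, lies in $\mV^{2}_{\beta'}(\Om)$ precisely for $\beta'>3/2$. With your weight, the energy argument in the final step fails (the candidate kernel element has a nonremovable singularity), so surjectivity is not obtained. Replacing $3-\beta$ with $2-\beta$ repairs the argument and aligns it with the convention the paper uses elsewhere.
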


\subsection{Near field operator}\label{ParNearFieldOperator}
\noindent Introduce the \textit{rapid coordinate} $\bfxi:=\delta^{-1}\bfx$ and let $\delta$ tend to zero. Then, define the function $\sigma^{\infty}:\R^3\to\R$ 
such that $\sigma^{\infty}=\sigma_+$ in $\Xi_+:=\R^3\setminus\overline{\Xi}_{-}$ and $\sigma^{\infty}=\sigma_-$ in $\Xi_-$. In the sequel, the 
following  \textit{near field} operator $\mB^{\infty}:D(\mB^{\infty})\to \mL^{2}(\R^{3})$ will play a key role 
in the analysis:
\begin{equation}\label{NFOp}
\begin{array}{|l}
\mB^{\infty} w \;=\; -\div(\sigma^{\infty}\nabla w)\\[6pt]
D(\mB^{\infty})\;:=\{w\in\mH^1(\R^3)\;\vert\; \div(\sigma^{\infty}\nabla w)\in\mL^{2}(\R^3)\}.
\end{array}
\end{equation}
The description of $D(\mB^{\infty})$ is less classical than the one of $D(\mA^{0})$ because the sign of the parameter $\sigma^{\infty}$ 
is not constant on $\R^3$. Nevertheless, when $\kappa_{\sigma}\neq -1$, $\mB^{\infty}$ has elliptic regularity properties. In the next result, we consider the norm  $\Vert w\Vert_{\mH^{2}(\R^{3}\setminus\Gamma)}:=
\Vert w\Vert_{\mH^{2}(\Xi_{-})}+\Vert w\Vert_{\mH^{2}(\Xi_{+})}$, as well as the jump of normal derivative $\lbr \sigma\partial_{\bfn}w\rbr_{\Gamma}:=
\sigma_{+}\partial_{\bfn}w\vert_{\Gamma}^{+} - \sigma_{-}\partial_{\bfn}w\vert_{\Gamma}^{-}$, where $\bfn$ denotes the unit outward normal vector to $\Gamma$ directed from $\Xi_-$ to $\Xi_+$.

\begin{proposition}\label{PropDomainBinf}
Assume that $\kappa_{\sigma}\neq -1$. Then any $w\in D(\mB^{\infty})$ satisfies 
$w\in \mH^{2}(\Xi_{\pm})$ and $\lbr\sigma\partial_{\bfn}w \rbr_{\Gamma}=0$. Moreover, there exists a constant 
$C>0$ such that 
\begin{equation}\label{estimDomainBinf}
\|w\|_{\mH^2(\R^{3}\setminus\Gamma)}\le C\,(\|\div(\sigma^{\infty}\nabla w)\|_{\R^3}+\|w\|_{\R^3}),\qquad \forall w\in D(\mB^{\infty}).
\end{equation}
\end{proposition}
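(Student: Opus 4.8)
The plan is to establish the elliptic regularity estimate \eqref{estimDomainBinf} by reducing it to a local transmission problem near $\Gamma$ combined with standard interior and exterior estimates away from $\Gamma$. The key structural point is that since $\Gamma$ is smooth and $\kappa_\sigma\neq-1$, the transmission problem for $-\div(\sigma^\infty\nabla\cdot)$ across $\Gamma$ satisfies the Shapiro--Lopatinskii (complementing) condition, which is precisely what fails when $\kappa_\sigma=-1$. So the first step is to fix $w\in D(\mB^\infty)$, set $f:=\div(\sigma^\infty\nabla w)\in\mL^2(\R^3)$, and record that $w$ already belongs to $\mH^1(\R^3)$ with $w_\pm\in\mH^1(\Xi_\pm)$ and satisfies, in the sense of distributions, $-\sigma_\pm\Delta w_\pm=-f$ in $\Xi_\pm$ together with the natural transmission condition $[w]_\Gamma=0$ (from $w\in\mH^1(\R^3)$) and weakly $[\sigma\partial_\bfn w]_\Gamma=0$ (from the variational formulation). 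The real content is upgrading $\mH^1$ regularity to $\mH^2$ up to $\Gamma$.

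Next I would localize. Using a smooth partition of unity on $\R^3$ subordinate to a covering by (i) a bounded neighborhood $\mathscr{U}$ of $\overline{\Xi_-}$ whose boundary portion near $\Gamma$ is handled by transmission estimates, (ii) balls contained in $\Xi_-$ or in $\Xi_+$ where interior elliptic regularity applies, and (iii) the exterior region $\{|\bfx|>R\}$ where $\sigma^\infty\equiv\sigma_+$ and one has the constant-coefficient Laplacian, I reduce to two model situations. Away from $\Gamma$, in any ball where $\sigma^\infty$ is constant, Caccioppoli-type interior $\mH^2$ estimates give $\|w\|_{\mH^2(\text{ball})}\le C(\|f\|_{\text{larger ball}}+\|w\|_{\text{larger ball}})$; for the unbounded exterior piece one uses that $-\sigma_+\Delta w = -f$ on $\{|\bfx|>R\}$ with $w\in\mH^1$ and invokes global $\mH^2$ regularity of the Laplacian on the exterior domain (or simply covers $\{|\bfx|>R\}$ by infinitely many balls of comparable size with finite overlap, summing the local estimates — the finite-overlap property keeps the constant uniform). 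Near $\Gamma$, flattening the boundary locally and freezing coefficients reduces to the two-sided transmission problem $-\sigma_\pm\Delta v_\pm = g_\pm$ in half-balls, $[v]=0$, $[\sigma\partial_n v]=h$ on the flat interface; here one applies the Agmon--Douglis--Nirenberg / Roitberg--Sheftel theory for elliptic systems with transmission conditions, whose complementing condition for this scalar transmission problem is exactly $\sigma_++\sigma_-\neq 0$, i.e. $\kappa_\sigma\neq-1$. This yields the local bound $\|v\|_{\mH^2(\text{half-balls})}\le C(\|g\|_{\mL^2}+\|h\|_{\mH^{1/2}}+\|v\|_{\mL^2})$, and since $h$ comes from the already-established weak condition $[\sigma\partial_\bfn w]_\Gamma=0$ we may take $h=0$ after a standard lifting argument (or bootstrap: once $w\in\mH^2$ near $\Gamma$ the trace $[\sigma\partial_\bfn w]_\Gamma$ makes classical sense and vanishes).

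Finally, I would sum the finitely many local estimates of type (i) and (ii) together with the exterior estimate of type (iii), absorb the overlapping cutoff commutator terms (which are first-order in $w$, hence controlled by $\|w\|_{\mH^1}$, and then by $\|f\|+\|w\|$ via the already-available $\mH^1$ energy estimate $\|\nabla w\|_{\R^3}\le C(\|f\|_{\R^3}+\|w\|_{\R^3})$ coming from testing the equation against $w$ and using $\kappa_\sigma\neq-1$ to get coercivity up to a compact perturbation — or more carefully, a G\aa rding-type inequality exploiting the sign pattern). This produces \eqref{estimDomainBinf} with a single constant $C$. Along the way the inclusion $w\in\mH^2(\Xi_\pm)$ and the identity $[\sigma\partial_\bfn w]_\Gamma=0$ fall out as byproducts of the near-$\Gamma$ analysis.

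I expect the main obstacle to be the near-$\Gamma$ step: verifying carefully that the scalar transmission problem for $-\div(\sigma^\infty\nabla\cdot)$ satisfies the Shapiro--Lopatinskii complementing condition precisely under $\kappa_\sigma\neq-1$, and citing or adapting the corresponding $\mH^2$ regularity theorem (Roitberg--Sheftel / Lions--Magenes) in a form valid up to the curved interface $\Gamma$ after localization and change of variables. The secondary subtlety is handling the unbounded domain $\R^3$: one must ensure the constant in the summed estimate does not blow up, which is guaranteed by the finite-overlap covering and the fact that $\sigma^\infty$ is constant outside a fixed compact set, so no new difficulty arises at infinity beyond bookkeeping. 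The energy estimate needed to close the argument (controlling $\|w\|_{\mH^1}$ by $\|f\|+\|w\|$) is itself a place where $\kappa_\sigma\neq-1$ is used, via a T-coercivity or G\aa rding argument, but this is by now standard for such sign-changing problems.
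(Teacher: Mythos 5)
Your proposal is correct and follows essentially the same approach as the paper: cut off near $\overline{\Xi_-}$, invoke the $\mH^2$ transmission-regularity theory of Lions--Magenes and Roitberg--Sheftel (your ADN/complementing-condition step, where $\sigma_++\sigma_-\neq 0$ is precisely $\kappa_\sigma\neq-1$), and handle the region away from $\Gamma$ by constant-coefficient elliptic regularity. The paper takes a slightly leaner path for two of your technical steps: it treats the far-field piece $\chi w$ directly by Plancherel for $\sigma_+\Delta(\chi w)$ on all of $\R^3$ rather than via a Caccioppoli/finite-overlap ball covering of the exterior, and it bounds the cut-off commutator terms not through a global T-coercivity/G\aa rding energy estimate but simply by interior elliptic regularity on the fixed annulus $\mrm{supp}(\nabla\psi)\cup\mrm{supp}(\Delta\psi)\subset\Xi_+$, where the operator reduces to $-\sigma_+\Delta$ with constant positive coefficient; your global energy route is available once $\kappa_\sigma\neq-1$, but it is heavier than needed to close the estimate.
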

\begin{proof}
Clearly, any element $w\in \mH^{1}(\R^{3})$ that verifies $w\in \mH^{2}(\Xi_{\pm})$ and 
$\lbr\sigma\partial_{\bfn}w \rbr_{\Gamma}=0$ belongs to $D(\mB^{\infty})$. Now, let us pick some $w\in D(\mB^{\infty})$. 
The relation $\lbr\sigma\partial_{\bfn}w \rbr_{\Gamma}=0$ is classically obtained using Green's formula. Now, we wish to prove (\ref{estimDomainBinf}). Let $\psi$ be the cut-off function defined in (\ref{definition cut-off functions}). There holds $\div(\sigma^{\infty}\nabla(\psi w))=f$ with $f=\psi\,\div(\sigma^{\infty}\nabla  w)+2\sigma_+\nabla\psi
\cdot\nabla w+\sigma_+w\Delta\psi\in\mrm{L}^2(\R^{3})$ (notice that $\sigma^{\infty}=\sigma_+$ on the supports of $\nabla\psi$ and $\Delta\psi$). 
Since $\kappa_{\sigma}\neq -1$ and since $\Gamma = \partial\Xi_-$ is smooth, the results of \cite{LiMa68,RoSh63} allow to prove that $\psi w|_{\Xi_{\pm}}$ belongs to $\mH^2(\Xi_{\pm})$ with the estimate $\|\psi w\|_{\mH^2(\Xi_{\pm})}\le C\,(\|f\|_{\R^{3}}+\|w\|_{\Om})$. Here and in the sequel, $C>0$ denotes a constant independent of $w$ which can change from one line to another. Using interior regularity in $\Xi_{+}$, one establishes with classical techniques 
(see \cite{LiMa68}) that $\|f\|_{\R^{3}}\le C\,(\|\psi\,\div(\sigma^{\infty}\nabla w)\|_{\R^{3}}+\|w\|_{\Om})$. This implies
\begin{equation}\label{estim1domain}
\|\psi w\|_{\mH^2(\Xi_{\pm})}\le C\,(\|\psi \,\div(\sigma^{\infty}\nabla w)\|_{\R^{3}}+\|w\|_{\Om})
,\qquad\forall w\in D(\mrm{B}^{\infty}),
\end{equation} 
with a constant $C>0$ that does not depend on $w$. It remains to estimate $\div(\sigma^{\infty}\nabla(\chi w))=\sigma_+\Delta(\chi w)$. 
Thanks to Plancherel's theorem, we can write $\|\chi w\|_{\mH^2(\Xi_{+})}\le C\,(\|\div(\sigma^{\infty}\nabla(\chi w))\|_{\Xi_{+}}+\|\chi w\|_{\R^3})$. 
Using again interior regularity in $\Xi_{+}$, we deduce that $\|\chi w\|_{\mH^2(\Xi_{+})}\le C\,(\|\chi\div(\sigma^{\infty}\nabla w)\|_{\R^3}+\|w\|_{\R^3})$. 
This last inequality together with (\ref{estim1domain}) finally leads to (\ref{estimDomainBinf}) since $\psi+\chi=1$.
\end{proof}
\noindent Let us describe the spectrum $\mathfrak{S}(\mB^{\infty})$ of the operator $\mB^{\infty}$ when $\kappa_{\sigma}\neq -1$. 
According to the 3D version of \cite[Thm 5.2]{BoCC12}, for all $f\in\mrm{L}^2(\R^{3})$, we know that the problem 
``find $v\in\mH^1(\R^{3})$ such that $(\sigma^{\infty}\nabla v,\nabla v')_{\R^{3}}\pm i(v,v')_{\R^{3}}=(f,v')_{\R^{3}}$ 
for all $v'\in\mH^1(\R^{3})$'', has a unique solution. On the other hand, since $-\sigma_+\Delta\pm i\mrm{Id}:\mH^2(\R^3)\to\mrm{L}^2(\R^3)$ 
is bijective, we can prove that $\mB^{\infty}\pm i\mrm{Id}$ is bijective. Here, $\mrm{Id}$ denotes the identity of $\mrm{L}^2(\R^3)$. From 
\cite[Thm 4.1.7]{BiSo87}, we conclude that $\mB^{\infty}$ is self-adjoint. Moreover, observing that $\sigma^{\infty}=\sigma_+$ 
outside the compact region $\overline{\Xi}_{-}$, using again the 3D version of \cite[Thm 5.2]{BoCC12}, we can show 
that $\mB^{\infty}$ has the same continuous spectrum as the Laplace operator in $\R^3$. In other words, there holds 
$\mathfrak{S}_c(\mB^{\infty})=[0;+\infty)$. The interval $(-\infty;0)$ contains discrete spectrum only. 
Working as in \cite[Prop 4.1]{BoRa02}, we can build a sequence $(\zeta_n)_n$ of 
elements of $D(\mB^{\infty})$ such that $\lim_{n\to\infty}(\mB^{\infty}\zeta_n,\zeta_n)_{\R^3}=-\infty$ and $\|\zeta_n\|_{\R^3}=1$. 
According to \cite[Cor 4.1.5]{BiSo87}, this proves that the discrete spectrum of $\mB^{\infty}$ is equal to a sequence of eigenvalues 
\begin{equation}\label{eigenvalue unbounded}
\begin{array}{l}
\mathfrak{S}(\mrm{B}^{\infty})\setminus\overline{\R_{+}} = \{\mu_{-n}\}_{n\geq 1}\qquad\textrm{with} \qquad 
0>\mu_{-1}\ge\mu_{-2}\ge\cdots\ge\mu_{-n}\ldots\underset{n\to+\infty}{\to}-\infty,
\end{array}
\end{equation}
where each eigenvalue is repeated according to its multiplicity. In the following proposition, we establish that the 
eigenfunctions corresponding to the negative spectrum of $\mrm{B}^{\infty}$ are localized: they decay exponentially at infinity.

\begin{proposition}\label{proposition decompo eigenfunction Outter}
Assume that $\kappa_{\sigma}\neq -1$. For any eigenfunction $w$ of $\mrB^{\infty}$ 
associated with an eigenvalue $\mu\in \mathfrak{S}(\mrB^{\infty})\setminus\overline{\R_{+}}$ 
satisfying $\Vert w\Vert_{\R^{3}} = 1$, we have
\begin{equation}\label{estimEigenNF}
\int_{\R^{3}}(\,\vert w(\bfxi)\vert^{2}+\vert \nabla w(\bfxi)\vert^{2}\,)
\exp\left(\vert\bfxi\vert\sqrt{\vert\mu\vert/\sigma_{+}}\,\right)\,d\bfxi<+\infty.
\end{equation}
\end{proposition}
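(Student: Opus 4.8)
The plan is to prove exponential decay of eigenfunctions by the classical Agmon-type / Combes–Thomas argument, exploiting the fact that outside the compact inclusion $\overline{\Xi}_-$ the operator $\mrB^{\infty}$ coincides with $-\sigma_+\Delta$, and that $\mu<0$ lies strictly below the bottom $0$ of the essential spectrum. Concretely, fix an eigenpair $(\mu,w)$ with $\mu<0$ and $\|w\|_{\R^3}=1$. For a parameter $\gamma\in(0,\sqrt{|\mu|/\sigma_+})$ I introduce the bounded Lipschitz weight $\rho_R(\bfxi):=\min(|\bfxi|,R)$ for $R>0$, which satisfies $|\nabla\rho_R|\le 1$ a.e. and $\rho_R=0$ on $\Xi_-$ once $R$ is fixed but we will in fact keep $\rho_R$ supported away from $\Xi_-$ by composing with a cut-off vanishing on $\overline{\Xi}_-$ (recall $\Xi_-\subset\overline{\mrm{B}(O,1)}$); write $\phi_R:=\gamma\rho_R$ so that $e^{\phi_R}$ is bounded and Lipschitz. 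The test function $v':=e^{2\phi_R}w$ belongs to $\mH^1(\R^3)$, so the variational identity $(\sigma^{\infty}\nabla w,\nabla(e^{2\phi_R}w))_{\R^3}=\mu(w,e^{2\phi_R}w)_{\R^3}$ holds.

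**Next I would** expand the left-hand side. Setting $W:=e^{\phi_R}w$, a direct computation using $\nabla(e^{2\phi_R}w)=e^{\phi_R}(\nabla W+W\nabla\phi_R)$ and $\sigma^{\infty}\nabla w=\sigma^{\infty}e^{-\phi_R}(\nabla W-W\nabla\phi_R)$ gives
\begin{equation*}
\int_{\R^3}\sigma^{\infty}\bigl(|\nabla W|^2-|\nabla\phi_R|^2\,|W|^2\bigr)\,d\bfxi=\mu\int_{\R^3}|W|^2\,d\bfxi.
\end{equation*}
On the region $\Xi_+=\R^3\setminus\overline{\Xi}_-$ one has $\sigma^{\infty}=\sigma_+>0$ and $|\nabla\phi_R|\le\gamma$, so the $\Xi_+$-part of the left side is bounded below by $\sigma_+\!\int_{\Xi_+}(|\nabla W|^2-\gamma^2|W|^2)$. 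On the compact set $\Xi_-$ the weight $\phi_R$ is $O(1)$ uniformly in $R$ (indeed it can be taken $\equiv 0$ there), so $W|_{\Xi_-}$ and $\nabla W|_{\Xi_-}$ are bounded in $\mL^2(\Xi_-)$ uniformly in $R$ by a constant depending only on $w$; hence the $\Xi_-$-contribution of the left side, together with the corresponding piece of the right side, is $O(1)$. Rearranging, and using $-\mu=|\mu|>0$, yields
\begin{equation*}
\sigma_+\int_{\Xi_+}|\nabla W|^2\,d\bfxi+\bigl(|\mu|-\sigma_+\gamma^2\bigr)\int_{\Xi_+}|W|^2\,d\bfxi\le C,
\end{equation*}
with $C$ independent of $R$. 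Since $\gamma<\sqrt{|\mu|/\sigma_+}$, the coefficient $|\mu|-\sigma_+\gamma^2$ is strictly positive, so both $\int_{\Xi_+}|\nabla W|^2$ and $\int_{\Xi_+}|W|^2$ are bounded uniformly in $R$; adding back the uniformly bounded $\Xi_-$-parts, letting $R\to+\infty$ and invoking the monotone convergence theorem gives
\begin{equation*}
\int_{\R^3}\bigl(|w(\bfxi)|^2+|\nabla w(\bfxi)|^2\bigr)e^{2\gamma|\bfxi|}\,d\bfxi<+\infty
\end{equation*}
for every $\gamma<\sqrt{|\mu|/\sigma_+}$.

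**The last step** is to reach the borderline rate $\gamma=\sqrt{|\mu|/\sigma_+}$ stated in \eqref{estimEigenNF}, i.e. to upgrade the strict inequality $\gamma<\sqrt{|\mu|/\sigma_+}$ to equality. This I would obtain by a bootstrap using the equation itself: away from $\overline{\Xi}_-$, $w$ solves $-\sigma_+\Delta w=\mu w$, so $w$ is a (real-)analytic solution of a Helmholtz-type equation with the decay rate $\sqrt{|\mu|/\sigma_+}$ being exactly the exponential rate of its fundamental solution $e^{-\sqrt{|\mu|/\sigma_+}\,|\bfxi|}/|\bfxi|$. Having already established that $w$ decays faster than any $e^{-\gamma|\bfxi|}$ with $\gamma<\sqrt{|\mu|/\sigma_+}$, a representation formula (or comparison with the resolvent kernel of $-\sigma_+\Delta-\mu$ on $\R^3$, applied to the compactly supported right-hand side $-[{-}\sigma_+\Delta-\mu](\chi_\rho w)$ for a cut-off $\chi_\rho$ vanishing near the inclusion) gives the pointwise bound $|w(\bfxi)|+|\nabla w(\bfxi)|\le C(1+|\bfxi|)^{-1}e^{-\sqrt{|\mu|/\sigma_+}\,|\bfxi|}$, which is amply enough to make the integral in \eqref{estimEigenNF}, containing only the factor $e^{+\sqrt{|\mu|/\sigma_+}\,|\bfxi|}$, converge in $\R^3$. \textbf{The main obstacle} is the sign change of $\sigma^{\infty}$: the weighted variational identity is not coercive on $\Xi_-$, so one must carefully quarantine the inclusion — keeping the weight constant there and absorbing its contribution into the $O(1)$ right-hand side using that $w\in\mH^1(\R^3)$ already — rather than trying to run Agmon's estimate uniformly across $\Gamma$; the elliptic-regularity bound of Proposition \ref{PropDomainBinf} is what guarantees the $\Xi_-$-terms are genuinely harmless.
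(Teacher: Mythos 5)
Your Steps 1--2 are essentially the paper's proof: a weighted energy (Agmon/Combes--Thomas) identity obtained by testing the eigenvalue equation against $e^{2\phi}w$ with a truncated (hence bounded, so $e^{2\phi}w\in\mH^1(\R^3)$) exponential weight, together with the key device of keeping the weight essentially constant on $\overline{\Xi}_-$ so the sign-indefinite contribution from the inclusion is quarantined as a finite, $R$-independent quantity. The paper uses $\mathscr{W}^T_\gamma$ (constant for $|\bfxi|\le1$, $e^{\gamma|\bfxi|}$ in between, constant for $|\bfxi|\ge T$), whereas you use $\phi_R=\gamma\min(|\bfxi|,R)$; these are the same idea. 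Your derivation is slightly tighter: by working directly with $|\nabla W|^2$ rather than splitting it with a triangle inequality as the paper does, you obtain boundedness of $\int e^{2\gamma|\bfxi|}(|w|^2+|\nabla w|^2)$ for \emph{every} $\gamma<\sqrt{|\mu|/\sigma_+}$, while the paper only records the specific value $\gamma=\tfrac12\sqrt{|\mu|/\sigma_+}$.

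Your Step~3, however, rests on a misreading of the statement. The integrand in \eqref{estimEigenNF} carries the weight $\exp\!\bigl(|\bfxi|\sqrt{|\mu|/\sigma_+}\bigr)$, which in your notation is $e^{2\gamma|\bfxi|}$ with $\gamma=\tfrac12\sqrt{|\mu|/\sigma_+}$. This lies strictly inside the range $\gamma<\sqrt{|\mu|/\sigma_+}$ you have already established, so the proposition follows immediately from Steps~1--2; no ``borderline rate'' bootstrap is needed, and the paper simply plugs in $\gamma=\tfrac12\sqrt{|\mu|/\sigma_+}$ and stops. (Your pointwise bound via the resolvent kernel of $-\sigma_+\Delta-\mu$ is a correct way to get decay at the genuine borderline rate $\gamma=\sqrt{|\mu|/\sigma_+}$, but it proves a stronger statement than the one asked for and introduces unnecessary machinery.)
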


\begin{proof}
The proof of this proposition will rely on a technique used in \cite{KaNa00,CaDN10,Naza11}. Consider some $\gamma>0$ whose 
appropriate value will be fixed later on, and pick some $T\geq 2$. We introduce the weight function $\mathscr{W}_{\gamma}^T$ such that
\[
\mathscr{W}_{\gamma}^T(\bfxi) = 
\left\{ \begin{array}{lll} 
\exp(\gamma) & \mbox{ for } & \phantom{1\le\ }|\bfxi|\le 1\\ 
\exp(\gamma\vert\bfxi\vert)  & \mbox{ for } & 1\le|\bfxi|\le T \\
\exp(\gamma T) & \mbox{ for } & \phantom{1\le\,\, }|\bfxi|\ge T\,.
\end{array}\right.
\]
It is clear that $\mathscr{W}_{\gamma}^T$ is bounded and continuous. If $w$ is an eigenfunction of $\mB^{\infty}$ associated with the eigenvalue $\mu<0$, there holds $(\sigma^{\infty}\nabla w,\nabla w')_{\R^3} = \mu\,(w,w')_{\R^3}$ for all $w'\in\mH^1(\R^3)$. Choosing $w'=(\mathscr{W}_{\gamma}^T)^2w$ (observe that this function is indeed an element of $\mH^1(\R^3)$), we obtain 
\begin{equation}\label{estimExpoDecay1}
\begin{array}{lcl}
|\mu|\,\|\mathscr{W}_{\gamma}^Tw\|^2_{\R^3} & = & -(\sigma^{\infty}\nabla w,\nabla((\mathscr{W}_{\gamma}^T)^2w))_{\R^3}\\[4pt]
 & = & -(\sigma^{\infty}\mathscr{W}_{\gamma}^T\nabla w,\nabla(\mathscr{W}_{\gamma}^Tw))_{\R^3}-(\sigma^{\infty}\mathscr{W}_{\gamma}^T\nabla w,w\nabla \mathscr{W}_{\gamma}^T)_{\R^3}\\[4pt]
 & = & -(\sigma^{\infty}(\nabla(\mathscr{W}_{\gamma}^T w)-w\nabla \mathscr{W}_{\gamma}^T),\nabla(\mathscr{W}_{\gamma}^T w))_{\R^3}-(\sigma^{\infty}\mathscr{W}_{\gamma}^T\nabla w,w\nabla \mathscr{W}_{\gamma}^T)_{\R^3}\\[4pt]
 & = & -(\sigma^{\infty}\nabla(\mathscr{W}_{\gamma}^T w),\nabla(\mathscr{W}_{\gamma}^T w))_{\R^3}+(\sigma^{\infty}w\nabla \mathscr{W}_{\gamma}^T,w\nabla \mathscr{W}_{\gamma}^T)_{\R^3}.
\end{array}
\end{equation}
On $\Xi_{-}$, we have $\mathscr{W}_{\gamma}^T=\exp(\gamma)$. Therefore, (\ref{estimExpoDecay1}) rewrites as
\begin{equation}\label{estimExpoDecay2}
|\mu|\,\|\mathscr{W}_{\gamma}^Tw\|^2_{\R^3}+\sigma_{+}\,\|\nabla(\mathscr{W}_{\gamma}^Tw)\|^2_{\Xi_{+}} = \exp(2\gamma)|\sigma_{-}|\,\|\nabla w\|^2_{\Xi_{-}}+\sigma_{+}\,\|w\nabla \mathscr{W}_{\gamma}^T\|^2_{\R^{3}}.
\end{equation}
Add  $\sigma_{+}\|\nabla (\mathscr{W}_{\gamma}^T w)\|^2_{\Xi_{-}} = \sigma_{+}\exp(2\gamma)\|\nabla w\|^2_{\Xi_{-}}$ on each side of (\ref{estimExpoDecay2}), 
and use triangular inequality  $\Vert \nabla (\mathscr{W}_{\gamma}^T w)\Vert_{\R^{3}}^{2}\geq \frac{1}{2}
\Vert \mathscr{W}_{\gamma}^T\nabla w \Vert_{\R^{3}}^{2} - \Vert w \nabla \mathscr{W}_{\gamma}^T\Vert_{\R^{3}}^{2}$, to obtain 
\begin{equation}\label{estimExpoDecay3}
\vert \mu\vert \|w \mathscr{W}_{\gamma}^T\|^2_{\R^3}-2\sigma_{+}\|w\nabla \mathscr{W}_{\gamma}^T\|^2_{\R^{3}}+
\frac{\sigma_{+}}{2}\|\mathscr{W}_{\gamma}^T\nabla w\|^2_{\R^3} \leq \exp(2\gamma)\,(\sigma_{+}+\vert\sigma_{-}\vert)\,\|\nabla w\|^2_{\Xi_{-}}.
\end{equation}
We have $ \vert \nabla \mathscr{W}_{\gamma}^T\vert \leq \gamma \vert \mathscr{W}_{\gamma}^T\vert$, which implies
$\vert \mu\vert\|w \mathscr{W}_{\gamma}^T\|^2_{\R^3}-2\sigma_{+}\|w\nabla \mathscr{W}_{\gamma}^T\|^2_{\R^{3}}\geq (\vert \mu\vert-2\gamma^{2}\sigma_{+})
\,\|w \mathscr{W}_{\gamma}^T\|^2_{\R^3}$. From this, we conclude that
\[
\vert \mu\vert\|\mathscr{W}_{\gamma}^Tw\|_{\R^3}^{2}+\sigma_{+}\|\mathscr{W}_{\gamma}^T\nabla w\|_{\R^3}^{2} \le 
2\exp(2\gamma)\,(\sigma_{+}+\vert\sigma_{-}\vert)\,\|\nabla w\|^2_{\Xi_{-}},\qquad\textrm{for}\;\; 
\gamma = \frac{1}{2}\sqrt{\vert\mu\vert/\sigma_{+}}.
\]
There only remains to let $T$ tend to $+\infty$. Since the right-hand side of the inequality above 
is independent of $T$, this concludes the proof. \end{proof}
\noindent In the sequel, we will need to work with an operator analog to $\mrm{B}^{\infty}$ but considered in Sobolev spaces with weight at infinity. If $\bfxi\in\R^3$, we denote $\rho:=|\bfxi|$. For $\beta\in\R$, $k\geq 0$, we introduce the space $\mathcal{V}^{k}_{\beta}(\R^3)$ defined as the completion of the set $\mathscr{C}^{\infty}_0(\R^3)$ in the norm
\begin{equation}\label{def norme poids bis}
\Vert w\Vert_{\mathcal{V}^{k}_{\beta}( \R^3)} := \Big(\sum_{\vert\alpha\vert\le k}
\int_{ \R^3}(1+\rho)^{2(\beta+\vert\alpha\vert-k)}\vert\partial^{\alpha}_{\bfxi} w
\vert^{2} d\bfxi\;\Big)^{1/2}.
\end{equation}
We also define $\mathcal{V}^k_{\beta}(\Xi_{\pm}):=\{w|_{\Xi_{\pm}}\,\vert\,w\in\mathcal{V}^{k}_{\beta}(\R^3) \}$ (although
$\mathcal{V}^k_{\beta}(\Xi_{-})=\mH^k(\Xi_{-})$ for all $\beta\in\R$, as  $\Xi_{-}$ is bounded).  
Introduce the operator $\mathcal{B}_{\beta}:D(\mathcal{B}_{\beta})\to\mathcal{V}^0_{\beta}(\R^3)$ such that
\begin{equation}\label{def op inner field}
\begin{array}{|l}
\mathcal{B}_{\beta}w  =-\div(\sigma^{\infty}\nabla  w)\quad\forall w\in D(\mathcal{B}_{\beta})\\[6pt]
D(\mathcal{B}_{\beta}):=\{w\in\mathcal{V}^1_{\beta-1}(\R^3)\,|\,\div(\sigma^{\infty}\nabla  w)\in \mathcal{V}^0_{\beta}(\R^3)\}.
\end{array}
\end{equation}
Working as in the proof of Proposition \ref{PropDomainBinf} and using the Kondratiev theory, one can show that any element $w\in D(\mathcal{B}_{\beta})$ 
verifies $w\in \mV^{2}_{\beta}(\Xi_{\pm})$ and $\lbr \sigma\partial_{\bfn}w\rbr_{\Gamma} = 0$. Conversely it is straightforward to 
check that any $w\in \mV^{1}_{\beta-1}(\Xi_{\pm})$ satisfying these two conditions belongs to $D(\mathcal{B}_{\beta})$. Hence 
$D(\mathcal{B}_{\beta})$ is a closed subset of $\mathcal{V}^{2}_{\beta}(\R^{3}\setminus\Gamma)$, and $\mathcal{B}_{\beta}$ is continuous when equipping  
$D(\mathcal{B}_{\beta})$ with the norm  $\Vert w\Vert_{\mathcal{V}^{2}_{\beta}(\R^{3}\setminus\Gamma)} := \Vert w\Vert_{\mathcal{V}^{2}_{\beta}(\Xi_{+})}+ \Vert w\Vert_{\mathcal{V}^{2}_{\beta}(\Xi_{-})}$.
\begin{proposition}\label{proposition Fredholm Outter}
Assume that $\kappa_{\sigma}\ne-1$ and that $\beta\in(1/2;3/2)$. Then $\mathcal{B}_{\beta}$ is a Fredholm operator with 
$\mrm{ind}(\mathcal{B}_{\beta}):=\dim(\ker\,\mathcal{B}_{\beta})-\mrm{dim}(\coker\,\mathcal{B}_{\beta})=0$
\footnote{We recall that if $\mrm{X}$, $\mrm{Y}$ are two Banach spaces and if $L:\mrm{X}\to\mrm{Y}$ is a continuous linear map 
with closed range, then the cokernel of $L$ is defined as $\coker\,L:=(\mrm{Y}/\mrm{range}\,L)$.}.
\end{proposition}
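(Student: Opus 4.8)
The plan is to obtain the Fredholm property and the vanishing index for $\mathcal{B}_\beta$ by a perturbation-and-parametrix argument built on the two pieces that are already under control: the Laplace operator at the origin (Proposition \ref{propoLaplaceBounded}), which governs the local behaviour near $\Gamma$ after rescaling, and the Laplace operator at infinity in the weighted spaces $\mathcal{V}^k_\beta(\R^3)$. First I would recall the Kondratiev picture for $-\Delta$ on $\R^3$ in the scale $\mathcal{V}^k_\beta$: the critical weights correspond to the exponents $-1$ and $0$ of homogeneous harmonic functions (the constants and the Coulomb potential $\rho^{-1}$), so that $-\Delta:\mathcal{V}^2_\beta(\R^3)\cap\mathring{\mathcal{V}}^1_{\beta-1}(\R^3)\to\mathcal{V}^0_\beta(\R^3)$ is an isomorphism precisely for $\beta\in(1/2;3/2)$; this is the $\R^3$-analogue of Proposition \ref{propoLaplaceBounded}, proved in \cite{Kond67,NaPl94,MaNP00}. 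Since $\sigma^\infty=\sigma_+$ outside the fixed compact set $\overline{\Xi_-}$, the operator $\mathcal{B}_\beta$ differs from $-\sigma_+\Delta$ only through a transmission condition supported on the compact smooth interface $\Gamma$.

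Next I would localize: write $1=\psi+\chi$ with $\psi$ the cut-off of (\ref{definition cut-off functions}), so $\psi$ is supported near $\Gamma$ and $\chi$ vanishes there. Away from $\Gamma$ the operator is just $-\sigma_+\Delta$, for which the weighted global theory gives a right parametrix $R_\infty$ handling the $\chi$-part together with the behaviour as $\rho\to\infty$; near $\Gamma$, using that $\kappa_\sigma\ne-1$ and that $\Gamma$ is smooth, the results of \cite{LiMa68,RoSh63} provide elliptic regularity and hence a local parametrix $R_0$ for the transmission problem on a bounded neighbourhood of $\Gamma$, exactly as in the proof of Proposition \ref{PropDomainBinf}. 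Patching $R_0$ and $R_\infty$ with the cut-off functions produces an operator $R$ with $\mathcal{B}_\beta R=\mathrm{Id}+K_1$ and $R\mathcal{B}_\beta=\mathrm{Id}+K_2$, where $K_1,K_2$ are given by commutators $[\Delta,\psi]$, $[\Delta,\chi]$ times $R$; these commutators are first-order differential operators with coefficients supported in the bounded annulus $\{1\le\rho\le2\}$, so they gain decay at infinity and map into spaces compactly embedded in the target by the Rellich–Kondratiev compactness lemma for weighted spaces. Hence $K_1,K_2$ are compact, and $\mathcal{B}_\beta$ is Fredholm.

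For the index, I would use the homotopy invariance of the index under continuous deformation within Fredholm operators. The natural deformation is $t\mapsto \mathcal{B}_\beta^{(t)}$ obtained by scaling the contrast, e.g. replacing $\sigma_-$ by $(1-t)\sigma_- + t\,\sigma_+$ for $t\in[0;1]$: throughout this path the contrast stays in $(\;\cdot\;,+\infty)$ away from $-1$ (since both endpoints are, and one can choose the path to avoid $-1$; if $\sigma_-/\sigma_+<-1$ one first deforms $\sigma_+$ holding $\sigma_-$ fixed, or rescales so the contrast moves monotonically through values $\ne-1$), so the parametrix construction above goes through uniformly and each $\mathcal{B}_\beta^{(t)}$ is Fredholm with $t$-continuous dependence. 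At $t=1$ the interface disappears and $\mathcal{B}_\beta^{(1)}=-\sigma_+\Delta$ on $\mathcal{V}^2_\beta(\R^3)\cap\mathring{\mathcal{V}}^1_{\beta-1}(\R^3)$, which is an isomorphism for $\beta\in(1/2;3/2)$, hence of index $0$. Therefore $\mathrm{ind}(\mathcal{B}_\beta)=0$.

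The main obstacle is checking that the homotopy stays inside the class of Fredholm operators \emph{with $\beta$ in the admissible window} — equivalently, that the critical exponents of the associated model problem at infinity do not cross the line $\mathrm{Re}\,s=\beta-1$ (here $\beta-1\in(-1/2;1/2)$) along the deformation. At infinity the operator is always exactly $-\sigma_+\Delta$, whose critical exponents $0$ and $-1$ are $t$-independent, so this is automatic; the only $t$-dependence sits in the compact interface term, which never affects Fredholmness. A secondary technical point is the precise description of $D(\mathcal{B}_\beta)$ as a closed subspace of $\mathcal{V}^2_\beta(\R^3\setminus\Gamma)$ cut out by the transmission condition $[\sigma\partial_{\bfn}w]_\Gamma=0$ — but this is already recorded in the text just before the statement, so I would simply invoke it.
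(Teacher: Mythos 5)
Your Fredholm step via an explicit parametrix is a legitimate alternative to the paper's route (the paper instead derives the \emph{a priori} estimate $\Vert w\Vert_{\mathcal{V}^{2}_{\beta}(\R^{3}\setminus\Gamma)}\le C(\Vert\div(\sigma^{\infty}\nabla w)\Vert_{\mathcal{V}^0_{\beta}(\R^3)}+\Vert w\Vert_{\Om})$ from (\ref{estim1domain}) and Kondratiev theory, then invokes Peetre's lemma together with compactness of $w\mapsto w|_{\Om}$ to get closed range and finite-dimensional kernel). Both are standard; yours is a bit more explicit and essentially equivalent in strength, modulo the usual care about whether what you get near $\Gamma$ from \cite{LiMa68,RoSh63} is a genuine local right inverse or merely a regularity estimate — but the paper has the same appeal to those references, so this is not where the trouble lies.

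The genuine gap is in the index step. The deformation $t\mapsto\sigma_-(t):=(1-t)\sigma_-+t\,\sigma_+$ with $\sigma_-<0<\sigma_+$ necessarily passes through $\sigma_-(t_\ast)=0$ at $t_\ast=\sigma_-/(\sigma_--\sigma_+)\in(0;1)$. At that point the operator $-\div(\sigma^{\infty}\nabla\cdot)$ is identically zero on $\Xi_-$, so it is no longer elliptic there: its range is contained in $\{f: f|_{\Xi_-}=0\}$, which has infinite codimension, hence $\mathcal{B}_\beta^{(t_\ast)}$ is not Fredholm and the homotopy argument collapses. You carefully arranged the path to avoid $\kappa_\sigma=-1$, but any real path from a negative contrast to the contrast $+1$ must cross $\kappa_\sigma=0$; there is simply no admissible real path connecting the two components of $\R\setminus\{0\}$. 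A fix would be to deform $\kappa_\sigma$ through the complex plane (e.g.\ through the upper half-plane) so as to avoid both $0$ and $-1$, which keeps the principal symbol elliptic and the model transmission problem at $\Gamma$ solvable; but then the operators are no longer self-adjoint and you would have to redo the Kondratiev/transmission analysis for complex coefficients — none of which is acknowledged. The paper avoids the whole issue by a purely algebraic argument: closed range gives $\coker\,\mathcal{B}_\beta\cong\ker\mathcal{B}_\beta^{\,\ast}$, the adjoint is identified as $\mathcal{B}_{2-\beta}$, and weight-independence of the kernel on $(1/2;3/2)$ (which is stable under $\beta\mapsto2-\beta$) yields $\dim\ker\mathcal{B}_\beta=\dim\coker\,\mathcal{B}_\beta$ directly, with no deformation of coefficients at all. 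You should either adopt that duality argument or make the complex homotopy precise.
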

\begin{proof}
Once the difficulty of the change of sign of $\sigma^{\infty}$ on the compact set $\Xi_{-}$ has been tackled thanks to \cite{CoSt85,BoCC12}, 
the proof of this proposition is rather classical and we will just sketch it. From the Kondratiev theory \cite{Kond67}, we know that for 
$\beta\in(1/2;3/2)$, the Laplace operator maps isomorphically  $\mathcal{V}^2_{\beta}(\R^3)$ onto $\mathcal{V}^0_{\beta}(\R^3)$. Using this 
result and inequality (\ref{estim1domain}), we obtain 
$$
\|w\|_{\mathcal{V}^{2}_{\beta}(\R^{3}\setminus\Gamma)}\le C\,(\|\div(\sigma^{\infty}\nabla w)\|_{\mathcal{V}^0_{\beta}(\R^3)}+\|w\|_{\Om}),
\qquad \forall w\in D(\mathcal{B}_{\beta}).
$$ 
Since the map $w\mapsto w|_{\Om}$ from $D(\mathcal{B}_{\beta})$ to $\mL^2(\Om)$ is compact, 
using the classical extension \cite{Tart87} of the well-known Peetre's lemma \cite{Peet61} (see also lemma 5.1 of \cite[Chap 2]{LiMa68}), we 
infer from the previous \textit{a priori} estimate that $\mathcal{B}_{\beta}$ has a kernel of finite dimension and that its range is closed 
in $\mathcal{V}^0_{\beta}(\R^3)$. As a consequence of the latter property, \cite[Thm 3.3.5]{BiSo87} ensures that $\coker\,\mathcal{B}_{\beta}$ 
is isomorphic to $\ker\,\mathcal{B}_{\beta}{}^{\ast}$ where $\mathcal{B}_{\beta}{}^{\ast}$ denotes the adjoint of $\mathcal{B}_{\beta}$. But the 
adjoint of $\mathcal{B}_{\beta}$ is the operator $\mathcal{B}_{2-\beta}$, which has a kernel of finite dimension because for $\beta\in(1/2;3/2)$, $2-\beta$ 
also belongs to $(1/2;3/2)$. Therefore $\coker\,\mathcal{B}_{\beta}$ is of finite dimension. Using the Kondratiev theory again, we can 
establish that for all $\beta\in(1/2;3/2)$, there holds $\ker\,\mathcal{B}_{\beta}= \cap_{\gamma\in(1/2;3/2)}\ker\,\mathcal{B}_{\gamma}$. This 
implies $\ker\,\mathcal{B}_{\beta}=\ker\,\mathcal{B}_{2-\beta}$, $\mrm{dim}(\coker\,\mathcal{B}_{\beta})=\mrm{dim}(\ker\,\mathcal{B}_{2-\beta})=
\mrm{dim}(\ker\,\mathcal{B}_{\beta})$ and so $\mrm{ind}(\mathcal{B}_{\beta})=0$. 
\end{proof}
\noindent  Depending on the parameter $\sigma_{\pm}$ and on the domain $\Xi_-$, it can happen that the operator $\mathcal{B}_{\beta}$ gets a non trivial 
(finite dimensional) kernel. We discard this possibility, considering an additional 
assumption
\begin{Assumption}\label{assumption1}
There exists $\beta\in (1/2;3/2)$ such that the operator $\mathcal{B}_{\beta}$ is injective.
\end{Assumption}

\noindent According to Proposition \ref{proposition Fredholm Outter} and the Kondratiev theory \cite{Kond67}, Assumption \ref{assumption1} 
implies that $\mathcal{B}_{\beta}:D(\mathcal{B}_{\beta})\to\mathcal{V}^0_{\beta}(\R^3)$ is an isomorphism for all $\beta\in(1/2,3/2)$ (and not just for one value of $\beta$). For a concrete case where this assumption is satisfied, one may for example consider the situation of Section \ref{Numerics}.

\begin{remark}\label{rmq kernel}
This assumption is interesting in its own and there exist results to check whether or not it holds for a given configuration. It is related to a question 
investigated by H. Poincar\'e in \cite{Poin97}. As it is done in the introduction of the very interesting paper \cite{KhPS07}, let us summarize Poincar\'e's 
problem with our notation. Let $w:\R^3\to\R$ be a continuous function whose restrictions to $\Xi_{\pm}$ are harmonic (like the elements of the kernel 
of $\mathcal{B}_{\beta}$). If we impose the total energy $\|\nabla w\|_{\R^3}$ to be equal to one, what is the minimum of $\|\nabla w\|_{\Xi_{-}}$? The answer is simple: this minimum is zero and it is attained for $w=w^0$ where $w^0=1$ in $\Xi_{-}$. Now, if we assume that $w$ satisfies both $\|\nabla w\|_{\R^3}=1$ 
and the orthogonality relation $(\nabla w,\nabla w^0)_{\R^3}=0$, what is the minimum of $\|\nabla w\|_{\Xi_{-}}$? Is it attained? It turns out that the minimum 
is indeed attained and is equal to some $m^1>0$. Moreover, if $w^1$ is a function which realizes this minimum, then there holds  $\partial_{\bfn}w^1|_{\Gamma}^+=
-m^1\partial_{\bfn}w^1|_{\Gamma}^- \mbox{ on }\partial\Xi_-$. In other words, $w^1$ belongs to the kernel of $\mathcal{B}_{\beta}$ when the contrast $\kappa_{\sigma}=\sigma_{-}/\sigma_{+}$ 
verifies $\kappa_{\sigma}=-m^1$. Continuing the process and imposing the orthogonality relations $(\nabla w,\nabla w^0)_{\R^3}=0$, $(\nabla w,\nabla w^1)_{\R^3}=0 \dots$, 
we can express all the values of the contrasts $\kappa_{\sigma}$ for which $\mathcal{B}_{\beta}$ fails to be injective in term of the extrema of the ratios of 
energies $\|\nabla w\|_{\Xi_{-}}/\|\nabla w\|_{\R^3}$. For more details concerning this question, we refer the reader to \cite{KhPS07,HePe12,PePu12,BoTr13,MR3195185}. 
\end{remark}
\begin{remark}\label{rmq plasmonics}
In the physical literature \cite{Maier}, values of $\kappa_{\sigma} = \sigma_{-}/\sigma_{+}$ leading to a non-trivial kernel for $\mathcal{B}_{\beta}$ are referred to as plasmonic eigenvalues.  Note that, although there is a growing interest on this 
so-called plasmonic eigenvalue problem, the latter is \textit{not the concern} of the present article. Here we assume that $\kappa_{\sigma}$ is fixed and not a plasmonic eigenvalue for the operator $\mathcal{B}_{\beta}$ (see Assumption \ref{assumption1} above), and we study the spectrum of the corresponding operator $\mA^{\delta}$.
\end{remark}
\begin{remark}
The study of the asymptotics of the eigenvalues of $\mA^{\delta}$ as $\delta$ goes to zero remains an open question when Assumption \ref{assumption1} does not hold.
\end{remark}

\noindent In this paper, we focus on the behaviour of eigenvalues and we leave aside the justification of 
asymptotics of eigenfunctions. We shall prove that if Assumption \ref{assumption1} holds, then for all $n\in\N^{\ast}:=\{1,2,\dots\}$, 
the eigenvalue $\lambda_{n}^{\delta}\ge0$ of $\mA^{\delta}$ converges to the corresponding eigenvalue $\mu_{n}>0$ of $\mA^{0}$. We 
shall also show that for all $n\in\N^{\ast}$, the eigenvalue $\lambda_{-n}^{\delta}<0$ of $\mA^{\delta}$ is such that $\delta^{2}\lambda_{-n}$ 
converges to $\mu_{-n}<0$, where $\mu_{-n}<0$ is the eigenvalue of $\mB^{\infty}$ defined in (\ref{eigenvalue unbounded}). Before doing 
that, in the next section, we investigate the source term problem associated with the original spectral Problem (\ref{ExPb}). In the 
process, we will establish a stability estimate which will reveal useful for the study of the initial problem.

\section{Asymptotic analysis for the source term problem}\label{SectionSourceTermpb}
\noindent Let $f$ be a source term in $\mrm{L}^2(\Om)$. In this section, we consider the problem  
\begin{equation}\label{PbAprioriEstimate}
\begin{array}{|l}
\mbox{Find }u^{\delta}\in\mH^1_{0}(\Om)\mbox{ such that }\\[4pt]
-\div(\sigma^{\delta}\nabla u^{\delta}) = f\quad\mbox{ in }\Om.
\end{array}
\end{equation}
For a fixed $\delta\in(0;1]$, using Proposition \ref{propoLaplaceBounded} and the 3D version of \cite[Thm 5.2]{BoCC12}, we can show, 
working as in the proof of Proposition \ref{proposition Fredholm Outter}, that Problem (\ref{PbAprioriEstimate}) is 
uniquely solvable if and only if it is injective. Under Assumption \ref{assumption1}, when the contrast satisfies 
$\kappa_{\sigma}\ne-1$, we will establish that it is injective (so that $(\mA^{\delta})^{-1}$ is well-defined) for $\delta$ small enough. Then, we provide, and justify with an error estimate, an asymptotic expansion of the solution $u^{\delta}$.

\subsection{A stability estimate}
\noindent We start by proving an important stability estimate for problem (\ref{PbAprioriEstimate}). Because the sign of $\sigma^{\delta}$ changes on $\Om$, this is a delicate procedure and the variational approach developed in \cite{BoCZ10,BoCC12} to establish Fredholm property for (\ref{PbAprioriEstimate}) seems useless here. Instead, we will employ a method introduced in \cite[Chap 2]{MaNP00}, \cite{Naza99} (see also \cite{ChCNSu} for an example 
of application in the context of negative materials), relying on the use of overlapping cut-off functions. To implement this technique, 
we need to introduce the Hilbert spaces $\mV^k_{\beta,\, \delta}(\om)$, $k\in\N$, $\beta\in\R$, over a domain $\om\subset \R^{3}$.
These spaces are defined as the completions of $\mathscr{C}^{\infty}(\overline{\om})$ for the weighted norms
\begin{equation}\label{weighted space bounded}
 \|v\|_{\mV^k_{\beta,\delta}(\om)}  := \Big(\sum_{|\alpha|\le k}\int_{\Omega}
(r+\delta)^{2(\beta+\vert\alpha\vert-k)}\vert\partial^{\alpha}_{\bfx} v\vert^{2}\;d\bfx\;\Big)^{1/2}.
\end{equation}
Observe that for any $\beta\in\R$ and $\delta>0$, the space $\mV^k_{\beta,\, \delta}(\Om)$ coincides with  $\mH^k(\Om)$ because the norm (\ref{weighted space bounded}) is equivalent to $\|\cdot\|_{\mH^k(\om)}$. However, the constants coming into play in this equivalence definitely depend on $\delta$ which is a crucial feature.
In the sequel, we shall consider the norm defined by 
$\|v\|_{\mV^2_{\beta,\delta}(\Om\setminus\Gamma^{\delta})}:= \|v\|_{\mV^2_{\beta,\delta}(\Om^{\delta}_{+})} + 
\|v\|_{\mV^2_{\beta,\delta}(\Om^{\delta}_{-})}$.

\begin{proposition}\label{proposition stability estimate}
Assume that $\kappa_{\sigma}\neq -1$ and that Assumption \ref{assumption1} holds. Then there is some $\delta_0>0$ such that the operator $\mA^{\delta}:D(\mA^{\delta})\to \mL^{2}(\Omega)$ defined by (\ref{ExOp}) is an isomorphism for all $\delta\in(0;\delta_0]$. Moreover, there holds $v\in \mH^{2}(\Omega_{\pm}^{\delta})$ for all $v\in D(\mA^{\delta})$ and,  for any $\beta\in(1/2;3/2)$, there exists $C_{\beta}>0$ independent of $\delta$ such that 
\begin{equation}\label{lemma stability estimate result}
C_{\beta}\leq \inf_{v\in D(\mA^{\delta})\setminus\{0\}}\frac{\| \mA^{\delta} v  \|_{\mV^0_{\beta,\,\delta}(\Om)}}{
\|v\|_{\mV^2_{\beta,\delta}(\Om\setminus\Gamma^{\delta})}},\qquad\forall \delta\in(0;\delta_0].
\end{equation}
\end{proposition}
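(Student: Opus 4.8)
The plan is to prove the estimate \eqref{lemma stability estimate result} by contradiction, following the overlapping cut-off function technique of \cite[Chap 2]{MaNP00}, \cite{Naza99}, and then to deduce the isomorphism property and the $\mH^2$-regularity from it. Suppose the estimate fails: then there exist sequences $\delta_m\to0$ and $v_m\in D(\mA^{\delta_m})$ with $\|v_m\|_{\mV^2_{\beta,\delta_m}(\Om\setminus\Gamma^{\delta_m})}=1$ and $\|\mA^{\delta_m}v_m\|_{\mV^0_{\beta,\delta_m}(\Om)}=:\eta_m\to0$. Write $f_m:=\mA^{\delta_m}v_m$. The idea is to split $v_m$ into a \emph{near field} part, $\psi_{d_m}v_m$ for a suitably chosen intermediate scale $d_m$ with $\delta_m\ll d_m\ll 1$, and a \emph{far field} part, $\chi_{d_m}v_m$, using the cut-off functions \eqref{definition cut-off functions weigthed}, and to estimate each part by the corresponding limit operator.

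\textbf{Key steps.}
First I would rescale the near field part: set $w_m(\bfxi):=v_m(\delta_m\bfxi)$ and observe that $w_m$ (localized by $\psi_{d_m/\delta_m}$) solves, up to commutator terms coming from $\nabla\psi$ and $\Delta\psi$, an equation of the form $-\div(\sigma^\infty\nabla w_m)=$ (small) in a large ball $\mrm{B}(O,2d_m/\delta_m)$. Using Assumption \ref{assumption1}, i.e. that $\mathcal{B}_\beta$ is an isomorphism for every $\beta\in(1/2;3/2)$, together with Proposition \ref{proposition Fredholm Outter}, gives a uniform bound on the rescaled near-field part in the weighted norm $\mathcal{V}^2_\beta$. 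Translating back to the $\bfx$ variable, this produces a bound $\|\psi_{d_m}v_m\|_{\mV^2_{\beta,\delta_m}(\Om\setminus\Gamma^{\delta_m})}\le C(\eta_m+\text{commutator terms})$. Symmetrically, the far field part $\chi_{d_m}v_m$ is supported away from $O$ at scale $d_m$, where $\sigma^{\delta_m}=\sigma_+$, so it solves $-\sigma_+\Delta(\chi_{d_m}v_m)=$ (data + commutators) in $\Om$; here I would use Proposition \ref{propoLaplaceBounded}, the isomorphism property of $\mathcal{A}_\beta$ on $\mV^2_\beta(\Om)\cap\mathring{\mV}^1_{\beta-1}(\Om)$ for $\beta\in(1/2;3/2)$, and the equivalence (for the far-field support, where $r\gtrsim d_m\gg\delta_m$) between $(r+\delta_m)$-weights and $r$-weights, to get $\|\chi_{d_m}v_m\|_{\mV^2_{\beta,\delta_m}(\Om)}\le C(\eta_m+\text{commutator terms})$. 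The crucial point of the overlapping-cut-off method is that the commutator terms $\nabla\psi_{d_m}\cdot\nabla v_m$, $(\Delta\psi_{d_m})v_m$ are supported in the annulus $d_m\le r\le 2d_m$, where \emph{both} estimates are available, and their weighted $\mV^0_{\beta,\delta_m}$-norms can be absorbed: with the right choice of $d_m$ (e.g. $d_m=\sqrt{\delta_m}$), the factor gained from $|\nabla\psi_{d_m}|\le C/d_m$ against the weight is a small power of $\delta_m$, so these terms are bounded by $o(1)\cdot\|v_m\|_{\mV^2_{\beta,\delta_m}(\Om\setminus\Gamma^{\delta_m})}=o(1)$. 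Adding the two estimates and using $\psi_{d_m}+\chi_{d_m}=1$ yields $1=\|v_m\|\le C(\eta_m+o(1))\to0$, a contradiction.

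\textbf{From the estimate to the conclusion.}
Once \eqref{lemma stability estimate result} is established, injectivity of $\mA^{\delta}$ for $\delta\le\delta_0$ is immediate ($\mA^\delta v=0$ forces $\|v\|=0$). Since $\mA^{\delta}$ is self-adjoint with compact resolvent (Proposition \ref{propoCompactResol}), injectivity gives bijectivity, so $\mA^{\delta}$ is an isomorphism from $D(\mA^{\delta})$ onto $\mL^2(\Om)$ for $\delta\in(0;\delta_0]$. The $\mH^2$-regularity $v\in\mH^2(\Om_{\pm}^{\delta})$ for $v\in D(\mA^{\delta})$ follows from the interior/transmission elliptic regularity results of \cite{LiMa68,RoSh63} applied at fixed $\delta$ across the smooth interface $\Gamma^{\delta}$ (using $\kappa_\sigma\ne-1$), exactly as in the proof of Proposition \ref{PropDomainBinf}; alternatively, it is a byproduct of the weighted estimate, since membership in $\mV^2_{\beta,\delta}(\Om^{\delta}_{\pm})$ is equivalent to membership in $\mH^2(\Om^{\delta}_\pm)$ at fixed $\delta$.

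\textbf{Main obstacle.}
The hard part will be making the matching of the two scales uniform in $\delta$: one must verify that the near-field rescaled problem genuinely falls under the isomorphism provided by Assumption \ref{assumption1} despite being posed on a growing-but-bounded domain rather than all of $\R^3$ (this requires a second, nested contradiction argument at the level of the rescaled problem, passing to a limit that lives in $\mathcal{V}^2_\beta(\R^3)$ and invoking injectivity of $\mathcal{B}_\beta$), and that the weighted norms of the commutator terms in the overlap annulus really are $o(1)$ for the chosen intermediate scale — this is where the precise range $\beta\in(1/2;3/2)$ and the definition \eqref{weighted space bounded} of the $\delta$-dependent weight $(r+\delta)$ are used in an essential way.
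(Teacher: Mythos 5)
Your proof is by contradiction, whereas the paper's is constructive: it builds an explicit approximate right-inverse $\mrm{R}^{\delta}$ by decomposing the data $f$ (not the solution $v$) into a far-field piece $\chi_{\sqrt{\delta}}f$ and a rescaled near-field piece $\psi_{\sqrt{\delta}}(\delta\,\cdot)f(\delta\,\cdot)$, solving the two limit problems $\mathcal{A}_{\beta-\eps}v=\chi_{\sqrt{\delta}}f$ on $\Om$ and $\mathcal{B}_{\beta+\eps}V=\delta^{2}G$ on all of $\R^3$ at \emph{$\eps$-shifted weight exponents}, patching via $\hat u^{\delta}=\chi_{\delta}v+\psi V(\cdot/\delta)$, and showing $\mA^{\delta}\mrm{R}^{\delta}=\mrm{Id}+\mrm{K}^{\delta}$ with $\|\mrm{K}^{\delta}\|=O(\delta^{\eps/2})$. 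The smallness of the remainder comes precisely from the mismatch $\beta\mp\eps$ versus $\beta$: since $v$ is controlled in $\mV^{2}_{\beta-\eps}$ while the commutator is measured in $\mV^{0}_{\beta}$, the weight difference $r^{\eps}$ on the support of the commutator (scale $\delta$) produces the factor $\delta^{\eps}$; and the extra $\delta^{-\eps/2}$ lost when shifting the weight on $f$ is harmless because of the $\sqrt{\delta}$ intermediate scale. This constructive route also sidesteps the ``growing but bounded domain'' issue you flag: the near-field problem is solved genuinely on $\R^{3}$ via $\mathcal{B}_{\beta+\eps}$, not on $\Xi^{\delta}$, so no limiting argument is needed.

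The genuine gap in your version is the claim that the commutator terms in the overlap annulus are $o(1)$ purely from choosing $d_m=\sqrt{\delta_m}$. Check the arithmetic: on the annulus $d_m\le r\le 2d_m$ one has $(r+\delta_m)^{\beta}\approx d_m^{\beta}$, and $|\nabla\psi_{d_m}|\lesssim d_m^{-1}$ exactly cancels one factor of $d_m$, so
$$
\|(r+\delta_m)^{\beta}\nabla\psi_{d_m}\cdot\nabla v_m\|\ \lesssim\ \|(r+\delta_m)^{\beta-1}\nabla v_m\|_{d_m\le r\le 2d_m}\ \le\ \|v_m\|_{\mV^{2}_{\beta,\delta_m}}=1,
$$
which is $O(1)$, not $o(1)$. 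There is no net small power of $\delta_m$: the gain from the weight exactly compensates the loss from differentiating the cut-off, regardless of the intermediate scale. To make a contradiction argument of this type close, you need one of two additional ingredients: either the paper's $\eps$-shift (estimate the near/far pieces in weights $\beta\pm\eps$ and pay $\delta^{\eps/2}$ to gain $\delta^{\eps}$ on the commutator, which is exactly what the choice $d_m=\sqrt{\delta_m}$ is calibrated to exploit once the shift is in place), or an explicit compactness step proving $\|(r+\delta_m)^{\beta-1}\nabla v_m\|_{d_m\le r\le 2d_m}\to 0$ along the sequence — the ``nested contradiction'' you mention in passing, which is substantial additional work (one must extract weak limits, rule out concentration, and invoke injectivity of both $\mathcal{A}_{\beta}$ and $\mathcal{B}_{\beta}$ on the limits) and is not carried out. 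Your deduction of the isomorphism and the $\mH^{2}(\Om_{\pm}^{\delta})$ regularity from the estimate and from Propositions~\ref{propoCompactResol}, \ref{PropDomainBinf} is fine.
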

\begin{proof}
We will construct explicitly the inverse of $\mA^{\delta}$. We first define a linear map $\mrm{R}^{\delta}:\mL^{2}(\Omega)\to D(\mrm{A}^{\delta})$ in the following manner.
Take any element $f\in\mL^{2}(\Omega)$. Decompose it in an inner and an outer contribution,
$$
f(\bfx)=g(\bfx)+G(\bfx/\delta)\qquad\mbox{ with }\quad g(\bfx)=\chi_{\sqrt{\delta}}(r)f(\bfx)\quad\mbox{ and }\quad G(\bfxi)=\psi_{\sqrt{\delta}}(\delta\rho)f(\delta\bfxi).
$$
Let us emphasize that according to the definition of $\chi$, $\psi$ (see (\ref{definition cut-off functions weigthed})), 
there holds $\chi_{\sqrt{\delta}}+\psi_{\sqrt{\delta}}=1$. Moreover, since $\psi_{\sqrt{\delta}}=1$ for $r\le\sqrt{\delta}$ and 
$\psi_{\sqrt{\delta}}=0$ for $r\ge 2\sqrt{\delta}$, $\mbox{supp}(G)$ (the support of $G$) is bounded in $\R^3$. As a consequence, for all $\eps\in\R$, $\delta>0$, we have $g\in\mV^0_{\beta-\eps,\delta}(\Om)$ and $G\in\mV^0_{\beta+\eps}(\R^3)$. Besides, for $\eps\ge 0$, the following estimates are valid:
\begin{equation}\label{estimation new source term}
\|g\|_{\mV^0_{\beta-\eps,\delta}(\Om)}=\|(\delta+r)^{\beta-\eps}g\|_{\Om\setminus\overline{\mrm{B}(O,\sqrt{\delta})}} \le 
C\delta^{-\eps/2}\,\|(\delta+r)^{\beta}g\|_{\Om\setminus\overline{\mrm{B}(O,\sqrt{\delta})}} \le \delta^{-\eps/2}\,\|f\|_{\mV^0_{\beta,\,\delta}(\Om)}
\end{equation}
\begin{equation}\label{estimation new source termNF}
\begin{array}{llcl}
\mbox{ and } & \|G\|_{\mathcal{V}^0_{\beta+\eps}(\R^3)} & = &\|(1+\rho)^{\beta+\eps}G\|_{\mrm{B}(O,2/\sqrt{\delta})}  \\[4pt]
 & & \le & C\,\delta^{-\eps/2}\|(1+\rho)^{\beta}G\|_{\mrm{B}(O,2/\sqrt{\delta})}\  \le\   C\,\delta^{-\beta-3/2-\eps/2}\,\|f\|_{\mV^0_{\beta,\,\delta}(\Om)}.
\end{array}
\end{equation}
Here and in the sequel, $C>0$ denotes a constant, which can change from one line to another, depending on $\beta$, $\eps$ 
but not on $\delta$. The last inequality in (\ref{estimation new source termNF}) has been obtained making the change of 
variables $\bfx=\delta\bfxi$. This explains the appearance of the term $\delta^{-3/2-\beta}$. Let us choose $\eps>0$ small 
enough so that $[\beta-\eps;\beta+\eps]\in(1/2;3/2)$. Let us consider $v\in D(\mathcal{A}_{\beta-\eps})$, $V\in D(\mathcal{B}_{\beta+\eps})$ 
such that
\[
\mathcal{A}_{\beta-\eps}v=g,\qquad\qquad\qquad \mathcal{B}_{\beta+\eps}V=\delta^{2}G,
\]
where $\mathcal{A}_{\beta-\eps}$ and $\mathcal{B}_{\beta+\eps}$ were introduced respectively in (\ref{def op far field}) and (\ref{def op inner field}). 
Proposition \ref{propoLaplaceBounded} and Proposition \ref{proposition Fredholm Outter} coupled with Assumption \ref{assumption1} 
ensure that $v$, $V$ are well-defined and that they satisfy the estimates
\begin{equation}\label{estimate limit geometries}
\|v\|_{\mV^2_{\beta-\eps}(\Om)} \le C\,\|g\|_{\mV^0_{\beta-\eps}(\Om)}\le C\,\|g\|_{\mV^0_{\beta-\eps,\delta}(\Om)},\qquad\quad \|V\|_{\mathcal{V}^2_{\beta+\eps}(\R^{3}\setminus\Gamma)} \le C\,\delta^{2}\|G\|_{\mathcal{V}^0_{\beta+\eps}(\R^3)}.
\end{equation}
Finally, we define the operator $\mrm{R}^{\delta}$ such that for all $f\in\mL^2(\Om)$, $\mrm{R}^{\delta}f=\hat{u}^{\delta}$ with
\begin{equation}\label{constructionUhat}
\hat{u}^{\delta}(\bfx):=\chi_{\delta}(r)v(\bfx)+\psi(r)V_{\delta}(\bfx),\qquad\mbox{ where }\qquad V_{\delta}(\bfx):=V(\bfx/\delta).
\end{equation}
Since $V\in D(\mathcal{B}_{\beta})$, there holds $\mrm{R}^{\delta}f\in \mH^{2}(\Omega_{\pm}^{\delta})$. Moreover, we have $\lbr\sigma\partial_{\bfn}V\rbr_{\Gamma}=0$ so that $\mrm{R}^{\delta}f=\hat{u}^{\delta}$ belongs to $D(\mrm{A}^{\delta})$. Now observe that, 
since $r\leq r+\delta\leq 2r$ on $\mrm{supp}(\chi_{\delta})$, there exists a constant $C>0$
independent of $\delta$ such that $\Vert \chi_{\delta} v\Vert_{\mV^{2}_{\beta,\delta}(\Omega)}\leq 
C\Vert \chi_{\delta} v\Vert_{\mV^{2}_{\beta}(\Omega)}$. Hence, using (\ref{estimation new source term})-(\ref{estimate limit geometries}) 
with $\eps=0$ we can check (work as in \cite[Lem 2]{Naza93}) that
\begin{equation}\label{estimateUhat}
\Vert \mrm{R}^{\delta}f\Vert_{\mV^{2}_{\beta,\delta}(\Omega\setminus\Gamma^{\delta})}= \|\hat{u}^{\delta}\|_{\mV^2_{\beta,\delta}(\Om_{-}^{\delta})}+
\|\hat{u}^{\delta}\|_{\mV^2_{\beta,\delta}(\Om_{+}^{\delta})} \le C\,\|f\|_{\mV^0_{\beta,\,\delta}(\Om)}.
\end{equation}
We wish to prove that $\mrm{R}^{\delta}$ is an approximate inverse of $\mA^{\delta}$. Practically, we are going to show that $\hat{u}^{\delta}$  satisfies $\div(\sigma^{\delta}\nabla \hat{u}^{\delta}) = f^{\delta}$ for some 
source term $f^{\delta}$ close to $f$. A direct computation yields 
\begin{equation}\label{commutator hat}
\begin{array}{lcl}
-\div(\sigma^{\delta}\nabla \hat{u}^{\delta}) \hspace{-0.2cm}& \hspace{-0.2cm}= &\hspace{-0.2cm}  -\big[\div(\sigma^{\delta}\nabla\cdot),\chi_{\delta}\big]v-
\big[\div(\sigma^{\delta}\nabla\cdot),\psi\big]V_{\delta}-\chi_{\delta}\,\div(\sigma^{\delta}\nabla v^{\delta})-\psi\,\div(\sigma^{\delta}\nabla V_{\delta})\\[0.6em]
 \hspace{-0.2cm}& \hspace{-0.2cm}= &\hspace{-0.2cm} -\big[\div(\sigma^{\delta}\nabla\cdot),\chi_{\delta}\big]v-\big[\div(\sigma^{\delta}\nabla\cdot),\psi\big]V_{\delta}+
\chi_{\delta}\,\chi_{\sqrt{\delta}}\,f+\psi\,\psi_{\sqrt{\delta}}\,f\\[0.6em]
\hspace{-0.2cm} &\hspace{-0.2cm} = &\hspace{-0.2cm} -\big[\div(\sigma^{\delta}\nabla \cdot),\chi_{\delta}\big]v-\big[\div(\sigma^{\delta}\nabla\cdot),
\psi\big]V_{\delta}+f.
\end{array}
\end{equation}
In the above equalities, the commutator $[A,B]$ is defined by $[A,B]=AB-BA$. Since we have $\delta\le r \le 2\delta$ on the support of $\big[\div(\sigma^{\delta}\nabla\cdot),\chi_{\delta}\big]$ and $1\le r \le 2$ on the support of $\big[\div(\sigma^{\delta}\nabla\cdot),\psi\big]$, we find that $\div(\sigma^{\delta}\nabla \hat{u}^{\delta})\in\mV^0_{\beta,\,\delta}(\Om)$. Now, introduce the operator $\mrm{K}^{\delta}:\mV^0_{\beta,\,\delta}(\Om)\to\mV^0_{\beta,\,\delta}(\Om)$ such that for all $f\in\mV^0_{\beta,\,\delta}(\Om)$, $\mrm{K}^{\delta}f$ denotes the function verifying
\[
-\div(\sigma^{\delta}\nabla \hat{u}^{\delta}) = f + \mrm{K}^{\delta}f.
\]
From (\ref{commutator hat}), we know that $\mrm{K}^{\delta}f = -\big[\div(\sigma^{\delta}\nabla\cdot),\chi_{\delta}\big]v-\big[\div(\sigma^{\delta}\nabla\cdot),\psi\big]V_{\delta}$. Let us evaluate the norm of $\mrm{K}^{\delta}$. First, remark that 
\begin{equation}\label{PropertChiDelta}
|\nabla\chi_{\delta}|\le C\,\delta^{-1}\qquad\mbox{ and }\qquad|\Delta\chi_{\delta}|\le C\,\delta^{-2}.
\end{equation}
Defining, for $t>0$, the set $\mathbb{Q}^{t}:=\{\bfx\in\R^3\,|\,t<|\bfx|<2t\}$, we can write 
\begin{equation}\label{estimateReminder1}
\begin{array}{lcl}
\|\big[\div(\sigma^{\delta}\nabla\cdot),\chi_{\delta}\big]v\|_{\mV^0_{\beta,\,\delta}(\Om)} & \le &
\|(r+\delta)^{\beta}\sigma_+\nabla\chi_{\delta}\cdot\nabla v\|_{\Om}+\|(r+\delta)^{\beta}\sigma_+\Delta\chi_{\delta}\,v\|_{\Om}\\[4pt]
& \le & C\,(\delta^{-1}\|(r+\delta)^{\beta}\nabla v\|_{\mathbb{Q}^{\delta}}+\delta^{-2}\|(r+\delta)^{\beta} v\|_{\mathbb{Q}^{\delta}})\\[4pt]
& \le & C\,\delta^{\eps}\,(\|r^{\beta-\eps-1}\nabla v\|_{\mathbb{Q}^{\delta}}+\|r^{\beta-\eps-2} v\|_{\mathbb{Q}^{\delta}})\\[4pt]
& \le & C\,\delta^{\eps}\,\|v\|_{\mV^2_{\beta-\eps}(\Om)} \ \le\ C\,\delta^{\eps/2}\,\|f\|_{\mV^0_{\beta,\,\delta}}.
\end{array}
\end{equation}
In (\ref{estimateReminder1}), the last inequality comes from (\ref{estimate limit geometries}), (\ref{estimation new source term}). Proceeding similarly, we find 
$\|\big[\div(\sigma^{\delta}\nabla\cdot),\psi\big]V_{\delta}\|_{\mV^0_{\beta,\,\delta}(\Om)} \le C\,\delta^{\eps/2}\,\|f\|_{\mV^0_{\beta,\,\delta}}$. Therefore, 
for all $f\in\mV^0_{\beta,\,\delta}(\Om)$, there holds $\|\mrm{K}^{\delta}f\|_{\mV^0_{\beta,\,\delta}(\Om)}\le C\,\delta^{\eps/2}\,\|f\|_{\mV^0_{\beta,\,\delta}(\Om)}$. 
This proves that the norm of $\mrm{K}^{\delta}:\mV^0_{\beta,\,\delta}(\Om)\to\mV^0_{\beta,\,\delta}(\Om)$ tends to zero when $\delta\to0$. As a consequence, 
$\mrm{Id}+\mrm{K}^{\delta}$ is invertible for $\delta$ small enough.\\
\newline 
From this discussion, since $\mrm{A}^{\delta}\cdot \mrm{R}^{\delta}f = (\mrm{Id}+\mrm{K}^{\delta})f$, we conclude that the operator $\mA^{\delta}$ admits $\mrm{R}^{\delta}\cdot(\mrm{Id}+\mrm{K}^{\delta})^{-1}$ as 
a continuous inverse for $\delta$ small enough. This proves that there exists $\delta_0>0$ such that $\mA^{\delta}$ is an isomorphism for all $\delta\in(0;\delta_0]$. Formula (\ref{constructionUhat}) implies that if $v\in D(\mA^{\delta})$ verifies $\mA^{\delta}v=f$ and if $\mA^{\delta}$ is invertible, then we have $v= \mrm{R}^{\delta}\cdot(\mrm{Id}
+\mrm{K}^{\delta})^{-1}f\in \mH^{2}(\Omega_{\pm}^{\delta})$. Finally, we obtain (\ref{lemma stability estimate result}) from estimates (\ref{estimateUhat}) and (\ref{estimateReminder1}) which provide respectively uniform bounds of $\mrm{R}^{\delta}$ and $(\mrm{Id}+\mrm{K}^{\delta})^{-1}$.
\end{proof}

\subsection{Asymptotic expansion for the source term problem}\label{sectionSourceTerm}
\noindent Now that we know that problem (\ref{PbAprioriEstimate}) is uniquely solvable for $\delta$ small enough, we can think of providing an asymptotic expansion of its solution $u^{\delta}$ as $\delta$ goes to zero. This is precisely the goal of the present section. When $\delta\to0$, as previously observed, the inclusion of negative material disappears. This leads us naturally to consider the following problem
\begin{equation}\label{source term limit}
\begin{array}{|l}
\mbox{Find }v\in\mH^1_{0}(\Om)\mbox{ such that }\\[4pt]
-\sigma_+\Delta v = f\quad\mbox{ in }\Om.
\end{array}
\end{equation}
Here, $f\in\mrm{L}^2(\Om)$ is the same source term as the one of (\ref{PbAprioriEstimate}). We have $\mrm{L}^2(\Om) \subset \mV^0_{\beta}(\Om)$ for all $\beta\in(1/2;3/2)$. Therefore, Proposition 
\ref{propoLaplaceBounded} ensures that Problem (\ref{source term limit}) has a unique solution $v = (\mA^{0})^{-1}f$ satisfying, for all $\beta\in(1/2;3/2)$, $v\in \mV^{2}_{\beta}(\Omega)$. In addition, since $f\in\mrm{L}^2(\Om)$, it is known 
(see \cite[Chap 1]{MaNP00}) that $v-v(0)$ belongs to $\mV^2_{0}(\Om)$ with the estimate
\begin{equation}\label{estimate far field source term}
|v(0)|+\|v-v(0)\|_{\mV^2_{0}(\Om)} \le C\,\|f\|_{\Om}.
\end{equation}
In next proposition, we prove that for $\delta$ small enough, $v$ is a good approximation of $u^{\delta}$, the unique solution to Problem (\ref{PbAprioriEstimate}). To proceed, from $v$, we define 
another function $\mathcal{R}^{\delta}f$ by the formula 
\begin{equation}\label{definition approximation source term}
\mathcal{R}^{\delta}f := \chi_{\delta}\,v+\psi_{\delta}\,v(0),
\end{equation}
where $\chi_{\delta}$, $\psi_{\delta}$ were introduced in (\ref{definition cut-off functions weigthed}). The operator
$\mathcal{R}^{\delta}$ is clearly linear, and estimate (\ref{estimate far field source term}) indicates that it maps
continuously $\mL^{2}(\Omega)$ into $\mH^{2}(\Omega\setminus\Gamma^{\delta})$. Observe in addition 
that, since $\mathcal{R}^{\delta}v$ is constant in a neighbourhood of $\Omega_{-}^{\delta}$, we actually have $\mathcal{R}^{\delta}f\in D(\mA^{\delta})$. The following result is obtained by observing that $\mathcal{R}^{\delta}$ approximates both $(\mA^{\delta})^{-1}$ and $(\mA^{0})^{-1}$.
\begin{proposition}\label{propoEstimResolvantes}
Assume that $\kappa_{\sigma}\neq -1$ and that Assumption \ref{assumption1} holds. Then for any 
$\beta\in(1/2,3/2)$, there exist constants $C_{\beta}$, $\delta_0>0$ independent of $\delta$ such  that 
\begin{equation}\label{error estimate result}
\sup_{f\in\mL^{2}(\Omega)\setminus\{0\}}\frac{\Vert (\mA^{0})^{-1} f- (\mA^{\delta})^{-1}f\Vert_{\mV^{2}_{\beta,\delta}(\Omega\setminus\Gamma^{\delta})  }}{
\Vert f\Vert_{\Om}}\leq C_{\beta}\,\delta^{\beta}, \qquad\forall \delta\in(0;\delta_0].
\end{equation}
\end{proposition}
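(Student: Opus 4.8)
The plan is to show that the operator $\mathcal{R}^{\delta}$ defined in \eqref{definition approximation source term} is simultaneously a right-approximate inverse of $\mA^{\delta}$ and a right-approximate inverse of $\mA^{0}$, with the errors controlled by a power of $\delta$ in the appropriate weighted norms; the estimate \eqref{error estimate result} then follows from combining the two approximations with the uniform stability estimate \eqref{lemma stability estimate result} of Proposition \ref{proposition stability estimate}.

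\textbf{Step 1: $\mathcal{R}^{\delta}$ approximately inverts $\mA^{0}$.} First I would compute $-\sigma_+\Delta(\mathcal{R}^{\delta}f)$. Using $\chi_\delta+\psi_\delta=1$ and $-\sigma_+\Delta v=f$, a direct commutator computation gives $-\sigma_+\Delta(\mathcal{R}^{\delta}f)=f-\big[\sigma_+\Delta,\chi_\delta\big](v-v(0))$, where the commutator term is supported on the annulus $\mathbb{Q}^{\delta}=\{\delta<|\bfx|<2\delta\}$ and involves $\nabla\chi_\delta$, $\Delta\chi_\delta$, which obey \eqref{PropertChiDelta}. Estimating exactly as in \eqref{estimateReminder1}, and using that $v-v(0)\in\mV^2_0(\Om)$ with the bound \eqref{estimate far field source term}, one gets $\|f+\sigma_+\Delta(\mathcal{R}^{\delta}f)\|_{\Om}\le C\,\delta^{\beta}\,\|f\|_{\Om}$ for any $\beta\in(1/2;3/2)$ (the exponent $\delta^\beta$ coming from trading the weight $r^{\beta}$ near the origin against the $\delta^{-1},\delta^{-2}$ factors on $\mathbb{Q}^{\delta}$, keeping in mind $v-v(0)$ vanishes to first order at $O$ in the $\mV^2_0$ sense). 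Since $\mA^{0}$ is an isomorphism (Proposition \ref{propoLaplaceBounded}, or the remarks after \eqref{FFOp}), this yields $\|\mathcal{R}^{\delta}f-(\mA^{0})^{-1}f\|_{\mV^2_{\beta,\delta}(\Om\setminus\Gamma^{\delta})}\le C\,\delta^{\beta}\|f\|_{\Om}$, after noting that on the support of $\psi_\delta$ the function $\mathcal{R}^{\delta}f-v$ is a constant times $\psi_\delta$, whose $\mV^2_{\beta,\delta}$-norm is itself $O(\delta^{\beta})$ by a scaling argument, and that elsewhere $\mathcal{R}^{\delta}f=v=(\mA^{0})^{-1}f$.

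\textbf{Step 2: $\mathcal{R}^{\delta}$ approximately inverts $\mA^{\delta}$.} Here I would compute $-\div(\sigma^{\delta}\nabla(\mathcal{R}^{\delta}f))$. On $\Om_-^{\delta}$ and on a neighbourhood of $\Gamma^{\delta}$, $\mathcal{R}^{\delta}f=\psi_\delta\,v(0)=v(0)$ is constant, so $\div(\sigma^{\delta}\nabla(\mathcal{R}^{\delta}f))=0$ there and the transmission condition $\lbr\sigma\partial_{\bfn}\rbr_{\Gamma^{\delta}}=0$ holds trivially; hence $\mathcal{R}^{\delta}f\in D(\mA^{\delta})$ as already noted. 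On $\Om_+^{\delta}$, $\sigma^{\delta}=\sigma_+$, so again $-\sigma_+\Delta(\mathcal{R}^{\delta}f)=f-\big[\sigma_+\Delta,\chi_\delta\big](v-v(0))$, with the same annulus-supported remainder, giving $\|\mA^{\delta}(\mathcal{R}^{\delta}f)-f\|_{\mV^0_{\beta,\delta}(\Om)}\le C\,\delta^{\beta}\|f\|_{\Om}$. Apply $(\mA^{\delta})^{-1}$, whose norm from $\mV^0_{\beta,\delta}(\Om)$ to $\mV^2_{\beta,\delta}(\Om\setminus\Gamma^{\delta})$ is bounded uniformly in $\delta$ by \eqref{lemma stability estimate result}, to conclude $\|\mathcal{R}^{\delta}f-(\mA^{\delta})^{-1}f\|_{\mV^2_{\beta,\delta}(\Om\setminus\Gamma^{\delta})}\le C\,\delta^{\beta}\|f\|_{\Om}$. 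The triangle inequality combining Steps 1 and 2 gives \eqref{error estimate result}.

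\textbf{Main obstacle.} The routine-looking part (the commutator estimates) is genuinely the crux: one must verify that the weight $(r+\delta)^{\beta}$ on the annulus $\mathbb{Q}^{\delta}$, where $r\sim\delta$, contributes a factor $\delta^{\beta}$ that beats the $\delta^{-1}$ and $\delta^{-2}$ from $\nabla\chi_\delta$ and $\Delta\chi_\delta$ — this works precisely because $v-v(0)\in\mV^2_0(\Om)$, i.e.\ $v$ has a finite value at $O$ and its remainder decays fast enough near $O$ (Hardy-type inequalities packaged in \eqref{estimate far field source term}). The delicate accounting is to see that $\delta^{-1}\|(r+\delta)^{\beta}\nabla(v-v(0))\|_{\mathbb{Q}^{\delta}}\le C\delta^{\beta-1}\|r^{0}\nabla(v-v(0))\|_{\mathbb{Q}^{\delta}}$ and that the $\mV^2_0$-membership forces $\|r^{-1}\nabla(v-v(0))\|_{\mathbb{Q}^{\delta}}$ and $\|r^{-2}(v-v(0))\|_{\mathbb{Q}^{\delta}}$ to be bounded, so the whole remainder is $O(\delta^{\beta})$; the requirement $\beta<3/2$ (for the far-field operator to be Fredholm/isomorphic) and $\beta>1/2$ (so that $\mL^2\subset\mV^0_\beta$ and the weighted spaces behave) must both be respected throughout. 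I would also be careful that the estimate is uniform: all constants $C$ depend on $\beta$ but not on $\delta$ or $f$, which is exactly what Proposition \ref{proposition stability estimate} was set up to guarantee for the $(\mA^{\delta})^{-1}$ factor.
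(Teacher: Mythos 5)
Your overall strategy coincides with the paper's: use $\mathcal{R}^{\delta}$ as a common approximate inverse of both $\mA^{0}$ and $\mA^{\delta}$, estimate the two errors, and combine by triangle inequality. Step 2 is essentially correct: the observation that $\sigma^{\delta}=\sigma_{+}$ wherever $\mathcal{R}^{\delta}f$ is non-constant, so that $\mA^{\delta}\cdot\mathcal{R}^{\delta}=\mA^{0}\cdot\mathcal{R}^{\delta}$, and then Proposition \ref{proposition stability estimate} applied in the $\mV^{0}_{\beta,\delta}/\mV^{2}_{\beta,\delta}$ pair, is exactly the paper's argument.

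Step 1, however, has a genuine gap. The claimed estimate $\Vert f+\sigma_{+}\Delta(\mathcal{R}^{\delta}f)\Vert_{\Omega}\leq C\,\delta^{\beta}\Vert f\Vert_{\Omega}$ in the plain $\mL^{2}$-norm is false. Writing $\tilde{v}=v-v(0)$, a correct computation gives $-\sigma_{+}\Delta(\mathcal{R}^{\delta}f)=\chi_{\delta}f-\big[\sigma_{+}\Delta,\chi_{\delta}\big]\tilde{v}$ (you dropped the $-\psi_{\delta}f$ term), so the remainder is $\psi_{\delta}f+2\sigma_{+}\nabla\chi_{\delta}\cdot\nabla\tilde{v}+\sigma_{+}\tilde{v}\Delta\chi_{\delta}$. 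In $\mL^{2}$ none of these three terms gains a factor $\delta^{\beta}$: $\Vert\psi_{\delta}f\Vert_{\Omega}$ can be comparable to $\Vert f\Vert_{\Omega}$, and the commutator terms are only $O(1)\Vert f\Vert_{\Omega}$ since the $\delta^{-1},\delta^{-2}$ from $\nabla\chi_{\delta},\Delta\chi_{\delta}$ exactly cancel the $\delta,\delta^{2}$ coming from the $\mV^{2}_{0}$-membership of $\tilde{v}$ on the annulus $\mathbb{Q}^{\delta}$. The gain of $\delta^{\beta}$ comes \emph{only} from the weight $(r+\delta)^{\beta}\sim\delta^{\beta}$ in the $\mV^{0}_{\beta,\delta}$-norm, which is why the paper measures the remainder there. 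With the norm corrected, the step ``since $\mA^{0}$ is an isomorphism, this yields the $\mV^{2}_{\beta,\delta}$-estimate'' is still not automatic: Proposition \ref{propoLaplaceBounded} gives an isomorphism of $\mathcal{A}_{\beta}$ in the \emph{fixed} Kondratiev scale $\mV^{k}_{\beta}$, not a bound in the $\delta$-dependent scale $\mV^{k}_{\beta,\delta}$ that is uniform in $\delta$. That uniformity is precisely the content of Lemma \ref{Stability2}, which the paper proves separately by the overlapping cut-off decomposition, and which you do not invoke or reprove.

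Your ``after noting'' clause does contain the germ of a valid shortcut that would bypass Lemma \ref{Stability2} altogether, and in this sense it is a genuine alternative: since $\mathcal{R}^{\delta}f-(\mA^{0})^{-1}f=-\psi_{\delta}\,\tilde{v}$ is known explicitly (unlike $\mathcal{R}^{\delta}f-(\mA^{\delta})^{-1}f$), one can estimate its $\mV^{2}_{\beta,\delta}(\Omega\setminus\Gamma^{\delta})$-norm directly. On the support $\{r\leq2\delta\}$ the weight $(r+\delta)^{\beta+\vert\alpha\vert-2}\sim\delta^{\beta+\vert\alpha\vert-2}$, and each derivative of $\psi_{\delta}$ costs $\delta^{-1}$ while the bound $\Vert\tilde{v}\Vert_{\mV^{2}_{0}(\Omega)}\leq C\Vert f\Vert_{\Omega}$ from (\ref{estimate far field source term}) converts $\Vert\partial^{\gamma}\tilde{v}\Vert_{\mL^{2}(\{r\leq2\delta\})}$ into $O(\delta^{2-\vert\gamma\vert})$; keeping track of the bookkeeping gives $\Vert\psi_{\delta}\tilde{v}\Vert_{\mV^{2}_{\beta,\delta}(\Omega\setminus\Gamma^{\delta})}\leq C\delta^{\beta}\Vert f\Vert_{\Omega}$. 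But as written your description of the remainder as ``a constant times $\psi_{\delta}$'' is incorrect: it is $\psi_{\delta}\cdot(v(0)-v)$, a product of the cut-off with the \emph{non-constant} function $v(0)-v$, and the scaling argument needs this. To make Step 1 rigorous you must either (i) state the remainder estimate in $\mV^{0}_{\beta,\delta}$ and prove, or quote, a $\delta$-uniform stability estimate for $\mA^{0}$ in these weighted norms (Lemma \ref{Stability2}), or (ii) replace the isomorphism route entirely by the direct scaling estimate of $\psi_{\delta}\tilde{v}$ carried out carefully.
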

\begin{proof}
Take any $f\in\mL^{2}(\Omega)$ and for $\delta$ small enough, set $u^{\delta} := (\mA^{\delta})^{-1}f$, $\hat{u}^{\delta}:= \mathcal{R}^{\delta}f$, $\tilde{v}:=v-v(0)$ where $v$ is the solution to (\ref{source term limit}). Since $1=\chi_{\delta}+\psi_{\delta}$, we have $\nabla\psi_{\delta}=-\nabla\chi_{\delta}$ and $\Delta\psi_{\delta}=-\Delta\chi_{\delta}$. This implies
\[
\begin{array}{lcl}
-\div(\sigma^{\delta}\nabla(u^{\delta}-\hat{u}^{\delta})) & = & f+\div(\sigma^{\delta}\nabla \hat{u}^{\delta})\\[0.4em]
 & = & f+\sigma_+\chi_{\delta}\Delta v+2\sigma_+\nabla\chi_{\delta}\cdot\nabla \tilde{v}+\sigma_+\tilde{v}\Delta\chi_{\delta}\\[0.4em]
 & = & f(1-\chi_{\delta})+2\sigma_+\nabla\chi_{\delta}\cdot\nabla \tilde{v}+\sigma_+\tilde{v}\Delta\chi_{\delta}:=\hat{f}^{\delta}.
\end{array}
\]
What precedes can be rewritten $(\mrm{Id} - \mA^{\delta}\cdot\mathcal{R}^{\delta})f = \hat{f}^{\delta}$. Using (\ref{PropertChiDelta}), we find
\[
\begin{array}{lcl}
  \|\hat{f}^{\delta}\|_{\mV^0_{\beta,\,\delta}(\Om)} & \le & \|f(1-\chi_{\delta})\|_{\mV^0_{\beta,\,\delta}(\Om)}+
\|2\sigma_+\nabla\chi_{\delta}\cdot\nabla\tilde{v}\|_{\mV^0_{\beta,\,\delta}(\Om)}
+\|\sigma_+\tilde{v}\Delta\chi_{\delta}\|_{\mV^0_{\beta,\,\delta}(\Om)}\\[0.4em]
 & \le & C\,(\|(r+\delta)^{\beta}f\|_{\mrm{B}(O,2\delta)}+\delta^{-1}\|(r+\delta)^{\beta}\nabla \tilde{v}\|_{\mathbb{Q}^{\delta}}
+\delta^{-2}\|(r+\delta)^{\beta}\tilde{v}\|_{\mathbb{Q}^{\delta}})\\[0.4em]
  & \le  & C\,(\delta^{\beta}\,\|f\|_{\Om}+\delta^{\beta}\,\|\tilde{v}\|_{\mV^2_{0}(\Om)})\ \le\ C\,\delta^{\beta}\,\|f\|_{\Om}.
\end{array}
\]
To obtain the last line of the above inequalities, we have used (\ref{estimate far field source term}). 
This proves that $\Vert (\mrm{Id} - \mA^{\delta}\cdot\mathcal{R}^{\delta})f\Vert_{\mV^{0}_{\beta,\delta}(\Omega)}
\leq C\delta^{\beta}\Vert f\Vert_{\Omega}$ for all $f\in\mL^{2}(\Omega)$. Now, observe that 
$\mA^{\delta}\cdot\mathcal{R}^{\delta}f = \mA^{0}\cdot\mathcal{R}^{\delta}f$. Therefore, we also have
$\Vert (\mrm{Id} - \mA^{0}\cdot\mathcal{R}^{\delta})f\Vert_{\mV^{0}_{\beta,\delta}(\Omega)}
\leq C\delta^{\beta}\Vert f\Vert_{\Omega}$ for all $f\in\mL^{2}(\Omega)$. From Proposition \ref{proposition stability estimate} and Lemma \ref{Stability2} hereafter, we deduce 
\begin{equation}\label{estimTriang}
\begin{array}{ll}
 & \Vert (\mA^{0})^{-1}f - (\mA^{\delta})^{-1}f\Vert_{\mV^{2}_{\beta,\delta}(\Omega\setminus\Gamma^{\delta})}\\[6pt]
 \leq & \Vert (\mA^{0})^{-1}f -\mathcal{R}^{\delta}f\Vert_{\mV^{2}_{\beta,\delta}(\Omega\setminus\Gamma^{\delta})} +
\Vert(\mA^{\delta})^{-1}f -\mathcal{R}^{\delta}f\Vert_{\mV^{2}_{\beta,\delta}(\Omega\setminus\Gamma^{\delta})}\\[6pt]
 \leq & C\Vert f -\mA^{0}\cdot\mathcal{R}^{\delta}f\Vert_{\mV^{0}_{\beta,\delta}(\Omega)} +
C\Vert f -\mA^{\delta}\cdot\mathcal{R}^{\delta}f\Vert_{\mV^{0}_{\beta,\delta}(\Omega)}\leq C\delta^{\beta}\Vert f\Vert_{\Omega}.
\end{array}
\end{equation}
In (\ref{estimTriang}), $C>0$ is a constant, which can change from one line to another, independent of $\delta, f$. With (\ref{estimTriang}), we finally obtain (\ref{error estimate result}).
\end{proof}
\noindent The following lemma is a technical result needed in the proof of Proposition \ref{propoEstimResolvantes}. It claims that the stability property (\ref{lemma stability estimate result}) for $(\mA^{\delta})^{-1}$ is also satisfied by $(\mA^{0})^{-1}$. 
\begin{lemma}\label{Stability2}
For any $\beta\in(1/2,3/2)$, there exist constants $C_{\beta},\delta_{0}>0$ independent of $\delta$ such that 
$$
C_{\beta}\leq \inf_{v\in D(\mA^{0})\setminus\{0\}}\frac{\Vert \mA^{0}v\Vert_{\mV^{0}_{\beta,\delta}(\Omega)}}{
\Vert v\Vert_{\mV^{2}_{\beta,\delta}(\Omega\setminus\Gamma^{\delta})}},\qquad\forall \delta\in(0;\delta_0].
$$
\end{lemma}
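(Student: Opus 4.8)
The statement to prove is that the lower bound (stability estimate) for $\mathcal{A}^0$ holds with constant and $\delta_0$ independent of $\delta$, i.e. for all $\beta\in(1/2,3/2)$ there is $C_\beta>0$ with $\|\mathcal{A}^0 v\|_{\mathrm{V}^0_{\beta,\delta}(\Omega)}\ge C_\beta\|v\|_{\mathrm{V}^2_{\beta,\delta}(\Omega\setminus\Gamma^\delta)}$ uniformly in $\delta\in(0,\delta_0]$. The key idea is that $\mathcal{A}^0=-\sigma_+\Delta$ is a \emph{smooth-coefficient} operator on all of $\Omega$ (the sign change played no role here), so this is a purely Kondratiev-type weighted estimate with no transmission difficulty — the only subtlety is tracking the $\delta$-dependence of the weights $(r+\delta)$.

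\textbf{Step 1: reduce to the $\delta$-independent estimate of Proposition~\ref{propoLaplaceBounded}.} Given $v\in D(\mathcal{A}^0)=\mathrm{H}^2(\Omega)\cap\mathrm{H}^1_0(\Omega)$ and $f:=\mathcal{A}^0 v=-\sigma_+\Delta v$, I would split $f$ as in the proof of Proposition~\ref{proposition stability estimate}: write $f = g + G_\delta$ with $g = \chi_{\sqrt\delta} f$ supported away from $O$ (at distance $\gtrsim\sqrt\delta$) and $G_\delta = \psi_{\sqrt\delta} f$ supported in $\mathrm{B}(O,2\sqrt\delta)$. Pick $\eps>0$ with $[\beta-\eps,\beta+\eps]\subset(1/2,3/2)$. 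Inequality~(\ref{estimation new source term}) gives $\|g\|_{\mathrm{V}^0_{\beta-\eps,\delta}(\Omega)}\le \delta^{-\eps/2}\|f\|_{\mathrm{V}^0_{\beta,\delta}(\Omega)}$, and a Hardy-type inequality in the small ball (together with $r+\delta\le 3\sqrt\delta$ on the support) gives $\|G_\delta\|_{\mathrm{V}^0_{\beta+\eps,\delta}(\Omega)}\le C\delta^{-\eps/2}\|f\|_{\mathrm{V}^0_{\beta,\delta}(\Omega)}$ as well. Solving $-\sigma_+\Delta v_g = g$ and $-\sigma_+\Delta v_G = G_\delta$ using Proposition~\ref{propoLaplaceBounded} with exponents $\beta-\eps$ and $\beta+\eps$ respectively, and using that the $\mathrm{V}^k_{\beta\mp\eps}$ norms control the $\mathrm{V}^k_{\beta,\delta}$ norms up to a compensating power of $\delta$ (the crucial point: $(r+\delta)^{\beta}\le C\delta^{\mp\eps}(r+\delta)^{\beta\pm\eps}$ comparisons hold in the two regions $r\lesssim\delta$ and $r\gtrsim\delta$), one obtains $\|v_g\|_{\mathrm{V}^2_{\beta,\delta}(\Omega)}+\|v_G\|_{\mathrm{V}^2_{\beta,\delta}(\Omega)}\le C\|f\|_{\mathrm{V}^0_{\beta,\delta}(\Omega)}$ with $C$ independent of $\delta$. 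By uniqueness of the Dirichlet solution, $v=v_g+v_G$, giving the claim.

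\textbf{Step 2: handle the boundary condition and the change of variables carefully.} The near-field piece $v_G$ is most cleanly estimated after rescaling $\bfx=\delta\bfxi$: then $-\sigma_+\Delta$ becomes $-\delta^{-2}\sigma_+\Delta_{\bfxi}$ on the rescaled domain, and $\mathrm{V}^k_{\beta,\delta}(\Omega)$-norms become (up to explicit powers of $\delta$) $\mathcal{V}^k_\beta$-norms on a large expanding domain $\delta^{-1}\Omega\supset\mathrm{B}(O,2/\sqrt\delta)$; here I would invoke the Kondratiev isomorphism for the Laplacian between $\mathcal{V}^2_\beta$ and $\mathcal{V}^0_\beta$ on $\R^3$ (as already used in the proof of Proposition~\ref{proposition Fredholm Outter}) together with interior/boundary elliptic regularity to transfer back. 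Because $v_G$ must carry the homogeneous Dirichlet condition on $\partial\Omega$ while $G_\delta$ is supported near $O$, I would instead solve on $\Omega$ directly with $\mathcal{A}^0$ and use the Kondratiev weighted estimate \emph{on $\Omega$} — i.e., just apply Proposition~\ref{propoLaplaceBounded} twice and never leave $\Omega$; the rescaling is only needed to justify the $\delta$-power bookkeeping in the weight comparisons, exactly as in~(\ref{estimation new source termNF}).

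\textbf{Main obstacle.} The delicate part is the bookkeeping of powers of $\delta$: one must verify that the losses $\delta^{-\eps/2}$ incurred when passing from $\|\cdot\|_{\mathrm{V}^0_{\beta,\delta}}$ to $\|\cdot\|_{\mathrm{V}^0_{\beta\mp\eps}}$ on the two pieces are exactly compensated by the gains $\delta^{\pm\eps/2}$ when passing back from $\|\cdot\|_{\mathrm{V}^2_{\beta\mp\eps}}$ to $\|\cdot\|_{\mathrm{V}^2_{\beta,\delta}}$, so that the final constant is genuinely $\delta$-free; this is the same mechanism already exploited in the proof of Proposition~\ref{proposition stability estimate}, so the argument is essentially a re-run of that computation with the transmission operator $\mathcal{A}^\delta$ replaced by the constant-coefficient $\mathcal{A}^0$ — which is strictly easier since no sign-change and no overlapping-cut-off commutator estimates are needed. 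I expect the proof to be short, referring back to the estimates of Proposition~\ref{proposition stability estimate} for the technical inequalities.
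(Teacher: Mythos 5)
Your approach is genuinely different from the paper's, and it has a real gap. You decompose the \emph{source} $f=\mathcal{A}^0 v$ into $g+G_\delta$ and solve the two problems separately, whereas the paper decomposes the \emph{solution} itself: $v=\chi_\delta v+\psi_\delta v$, with cut-offs at scale $\delta$ (not $\sqrt\delta$). This distinction matters. In your scheme, $v_g$ and $v_G$ are both globally defined on $\Omega$, and to conclude you need to bound $\|v_g\|_{\mV^2_{\beta,\delta}(\Omega)}$ by $\|v_g\|_{\mV^2_{\beta-\eps}(\Omega)}$ (the norm Proposition~\ref{propoLaplaceBounded} actually gives you), modulo a power of $\delta$. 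But that comparison fails near the origin: the $\delta$-shifted weight $(r+\delta)^\gamma$ with $\gamma>0$ tends to $\delta^\gamma>0$ as $r\to 0$, while the unshifted Kondratiev weight $r^{\beta\mp\eps}$ vanishes there, so no inequality of the form $(r+\delta)^{\beta}\le C\,\delta^{s}\,r^{\beta\mp\eps}$ can hold uniformly for small $r$. Your parenthetical ``$(r+\delta)^{\beta}\le C\delta^{\mp\eps}(r+\delta)^{\beta\pm\eps}$'' is a \emph{shifted-to-shifted} comparison and is not the one you need; the ``$-\eps$'' version of it holds only for $r\lesssim\delta$ in any case. So the step ``one obtains $\|v_g\|_{\mV^2_{\beta,\delta}(\Omega)}+\|v_G\|_{\mV^2_{\beta,\delta}(\Omega)}\le C\|f\|_{\mV^0_{\beta,\delta}(\Omega)}$'' does not follow.

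The paper's choice of cutting the solution is precisely what avoids this. The outer piece $\chi_\delta v$ is supported in $\{r\ge\delta\}$, where $r\le r+\delta\le 2r$, so there the $\delta$-shifted and unshifted weights are equivalent with constants independent of $\delta$, and Proposition~\ref{propoLaplaceBounded} applies directly. The inner piece $\psi_\delta v$ is supported in $\mrm{B}(O,2\delta)$; after the rescaling $\bfxi=\bfx/\delta$ it becomes a compactly supported function $V_\delta$ in $\R^3$, and the weight $(r+\delta)^\beta=\delta^\beta(1+\rho)^\beta$ matches \emph{exactly} the weight of $\mathcal{V}^2_\beta(\R^3)$ — which, unlike $r^\beta$, does not vanish at $\rho=0$. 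The resulting commutator terms from $\nabla\psi_\delta,\Delta\psi_\delta$ (supported on the annulus $r\sim\delta$) are then absorbed using $\|v\|_{\mV^2_\beta(\Omega)}\le C\|\mathcal{A}^0v\|_{\mV^0_\beta(\Omega)}$. Your Step~2 gestures at this rescaling but then explicitly declines to carry it out (``solve on $\Omega$ directly ... never leave $\Omega$''), which is where the argument loses the piece it needs. To repair your proof you would essentially have to reintroduce the solution cut-off and rescaling — that is, reproduce the paper's proof.
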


\begin{proof}
Decompose any element 
$v\in D(\mA^{0})$ as $v(\bfx) = \chi_{\delta}(\bfx)v(\bfx) + V_{\delta}(\bfx/\delta)$ where
$V_{\delta}(\bfx/\delta) = \psi_{\delta}(\bfx)v(\bfx)$. We have $\mrm{supp}(\chi_{\delta}v)\subset \Omega\setminus 
\mrB(O,2\delta)$, and $\mrm{supp}(V_{\delta})\subset \overline{\mrB(O,2)}$. Observe that $D(\mA^{0})\subset 
\mV^{2}_{\beta}(\Omega)$ so that, according to Proposition \ref{propoLaplaceBounded}, there holds $\Vert v\Vert_{\mV^{2}_{\beta}(\Omega)}\leq C \Vert \Delta v\Vert_{\mV^{0}_{\beta}(\Omega)}$. Here and in the sequel, $C>0$ denotes a constant independent of $\delta$ which can change from one line to another. As a consequence, we can write 
\begin{equation}\label{Estimate1}
\begin{array}{lcl}
\Vert\chi_{\delta}v \Vert_{\mV^{2}_{\beta,\delta}(\Omega)}
& \leq &  C\, \Vert v \Vert_{\mV^{2}_{\beta,\delta}(\Omega\setminus\overline{\mrB(O,2\delta)})} 
\leq C\,\Vert v \Vert_{\mV^{2}_{\beta}(\Omega\setminus\overline{\mrB(O,2\delta)})} \\[6pt]

& \leq & C\, \Vert v \Vert_{\mV^{2}_{\beta}(\Omega )}
\leq C\,\Vert \mathcal{A}_{\beta} v \Vert_{\mV^{0}_{\beta}(\Omega)}\leq C\,\Vert \mA^{0}v \Vert_{\mV^{0}_{\beta,\delta}(\Omega)}.
\end{array}
\end{equation}
In (\ref{Estimate1}), we used that $r\leq r+\delta\leq 2r$ in $\Omega\setminus\overline{\mrB(O,2\delta)}$ 
and that $\Vert v\Vert_{\mV^{0}_{\beta}(\Omega)}\leq \Vert v\Vert_{\mV^{0}_{\beta,\delta}(\Omega)}$ since $\beta\geq 0$. For $\vert \bfxi\vert \geq 2$, there holds $V_{\delta}(\bfxi) = 0$. Therefore, the usual elliptic \textit{a priori} estimates give $\Vert V_{\delta}\Vert_{\mathcal{V}^{2}_{\beta}(\R^{3}  )}\leq C \Vert \Delta V_{\delta}\Vert_{\mathcal{V}^{0}_{\beta}(\R^{3})}$. Making the change of variables $\bfxi = \bfx/\delta$, we deduce 
\begin{equation}\label{Estimate2}
\begin{array}{lcl}
\Vert \psi_{\delta}v\Vert_{\mV^{2}_{\beta,\delta}(\Om)}
& \leq & C \Vert \Delta (\psi_{\delta}v)\Vert_{\mV^{0}_{\beta,\delta}(\Om)}\\[6pt]

 & \leq & C\, \Vert \psi_{\delta} \Delta v\Vert_{\mV^{0}_{\beta,\delta}(\Om)} +
     C\, \delta^{-1}\Vert \nabla v\Vert_{\mV^{0}_{\beta,\delta}(\mathbb{Q}^{\delta})}+C\,\delta^{-2} \Vert v\Vert_{\mV^{0}_{\beta,\delta}(\mathbb{Q}^{\delta})} \\[6pt]

& \leq & C\, \Vert \mA^{0}v\Vert_{\mV^{0}_{\beta,\delta}(\Om)} + C\, \delta^{-1}\Vert \nabla v\Vert_{\mV^{0}_{\beta}(\mathbb{Q}^{\delta})}+C\,\delta^{-2} \Vert v\Vert_{\mV^{0}_{\beta}(\mathbb{Q}^{\delta})} \\[6pt]

& \leq & C\, \Vert \mA^{0}v\Vert_{\mV^{0}_{\beta,\delta}(\Om)} +
     C\, \Vert v\Vert_{\mV^{2}_{\beta}(\Omega)} \\[6pt]

 & \leq & C\, \Vert \mA^{0}v\Vert_{\mV^{0}_{\beta,\delta}(\Om)} +
     C\, \Vert \mathcal{A}_{\beta} v\Vert_{\mV^{0}_{\beta}(\Omega)}
\leq C\, \Vert \mA^{0}v\Vert_{\mV^{0}_{\beta,\delta}(\Om)}.
\end{array}
\end{equation}
There only remains to gather (\ref{Estimate1}) and  (\ref{Estimate2}) that hold
for any element $v\in D(\mA^{0})$ and any $\delta\in(0;\delta_0]$. This leads to the conclusion of the proof.
\end{proof}

\noindent Error estimate (\ref{error estimate result}) can also be formulated in $\mL^{2}$--norm. Indeed, observe that there exists a constant independent of $\delta$ such that $1\leq C/(\vert\bfx\vert+\delta)$ 
for all $\bfx\in\Omega$. This implies $\Vert v\Vert_{\Omega}\leq C^{2-\beta}
\Vert v\Vert_{\mV^{0}_{\beta-2,\delta}(\Omega)}\leq C^{2-\beta}\Vert v\Vert_{\mV^{2}_{\beta,\delta}(\Omega\setminus\Gamma^{\delta})}$ 
for all $v\in \mH^{2}(\Omega\setminus\Gamma^{\delta})$. From Proposition \ref{propoEstimResolvantes}, we deduce the following result.
\begin{corollary}\label{coroErrorEtsimate}
Assume that $\kappa_{\sigma}\neq -1$ and that Assumption \ref{assumption1} holds. Then for any $\eps\in(0;1)$, there exist constants $C_{\eps},\delta_0>0$ independent of $\delta$ such that 
\begin{equation}\label{error estimate result L2}
\sup_{f\in\mL^{2}(\Omega)\setminus\{0\}}\frac{\Vert (\mA^{0})^{-1} f- (\mA^{\delta})^{-1}f\Vert_{\Omega }}{
\Vert f\Vert_{\Om}}\leq C_{\eps}\,\delta^{3/2-\eps},\qquad\forall \delta\in(0;\delta_0].
\end{equation}
\end{corollary}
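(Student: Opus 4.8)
The plan is to deduce Corollary \ref{coroErrorEtsimate} directly from Proposition \ref{propoEstimResolvantes} by exploiting the inclusion between weighted norms stated just before the corollary. First I would record the elementary observation that the weight $(r+\delta)$ is bounded above by a constant (namely $\mrm{diam}(\Om)+1$) on $\Om$, hence $1\le C/(r+\delta)$ on $\Om$ for a constant $C>0$ independent of $\delta$. Applying this pointwise bound $2-\beta\ge 0$ times (valid since $\beta<3/2<2$) yields, for any $v\in\mH^2(\Om\setminus\Gamma^\delta)$,
\[
\Vert v\Vert_{\Om}\le C^{2-\beta}\,\Vert v\Vert_{\mV^0_{\beta-2,\delta}(\Om)}\le C^{2-\beta}\,\Vert v\Vert_{\mV^2_{\beta,\delta}(\Om\setminus\Gamma^\delta)},
\]
the last step because the $\mV^2_{\beta,\delta}$-norm controls the $\mV^0_{\beta-2,\delta}$-norm of the zeroth-order term by definition of \eqref{weighted space bounded}.

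Next I would apply this with $v=(\mA^0)^{-1}f-(\mA^\delta)^{-1}f$, which indeed lies in $\mH^2(\Om\setminus\Gamma^\delta)$ by the regularity assertions in Propositions \ref{propoLaplaceBounded} and \ref{proposition stability estimate}. Combining with the estimate \eqref{error estimate result} of Proposition \ref{propoEstimResolvantes}, one gets
\[
\frac{\Vert (\mA^0)^{-1}f-(\mA^\delta)^{-1}f\Vert_{\Om}}{\Vert f\Vert_\Om}\le C^{2-\beta}\,C_\beta\,\delta^\beta,\qquad\forall\delta\in(0;\delta_0],
\]
for every fixed $\beta\in(1/2;3/2)$. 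Finally, given $\eps\in(0;1)$ one simply chooses $\beta=3/2-\eps\in(1/2;3/2)$ and sets $C_\eps:=C^{1/2+\eps}C_{3/2-\eps}$ to obtain \eqref{error estimate result L2}; the constant $\delta_0$ may be taken as the one furnished by Proposition \ref{propoEstimResolvantes} for this value of $\beta$, and it does not depend on $f$.

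There is essentially no obstacle here: the statement is a soft corollary, and the only point requiring a line of care is checking that the difference of resolvents is genuinely $\mH^2$ on each subdomain (so that the norm embedding applies) and that all constants entering the chain are $\delta$-independent — both of which are already guaranteed by the propositions one is allowed to invoke. I would keep the proof to a few lines, essentially the display chain above together with the sentence fixing $\beta=3/2-\eps$.
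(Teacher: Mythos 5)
Your argument is exactly the one the paper gives: the pointwise bound $1\le C/(r+\delta)$ raised to the power $2-\beta>0$ yields $\Vert v\Vert_{\Om}\le C^{2-\beta}\Vert v\Vert_{\mV^0_{\beta-2,\delta}(\Om)}\le C^{2-\beta}\Vert v\Vert_{\mV^2_{\beta,\delta}(\Om\setminus\Gamma^\delta)}$, and then Proposition~\ref{propoEstimResolvantes} with $\beta=3/2-\eps$ finishes the proof. The only nitpick is the phrasing ``applying this pointwise bound $2-\beta\ge0$ times'' — since $2-\beta$ is not an integer, you mean raising the inequality to the power $2-\beta$, but the intent is clear and the estimate is correct.
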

\begin{remark}
Notice that Proposition \ref{propoEstimResolvantes} and Corollary \ref{coroErrorEtsimate} prove that the function $v$ defined in (\ref{source term limit}) is a good approximation of $u^{\delta}$, the unique solution to Problem (\ref{PbAprioriEstimate}). Indeed, from (\ref{error estimate result}) with $\beta=1$ and from (\ref{error estimate result L2}), we can write, for all $\eps\in(0;1)$ and for $\delta$ small enough,
\[
\|u^{\delta}-v\|_{\mH^1_0(\Om)} \le C\,\delta\|f\|_{\Om}\qquad\mbox{ and }\qquad\|u^{\delta}-v\|_{\Om} \le C_{\eps}\,\delta^{3/2-\eps}\|f\|_{\Om}.
\]
\end{remark}
\noindent This ends the asymptotic analysis of the source term problem. In the sequel, we turn back to the initial spectral problem. To make the transition, observe that the result of Corollary \ref{coroErrorEtsimate} can be rephrased as $(\mA^{\delta})^{-1} = (\mA^{0})^{-1} + O(\delta^{3/2-\eps})$ considering $(\mA^{\delta})^{-1},(\mA^{0})^{-1}$ as operators mapping $\mL^{2}(\Omega)$ to $\mL^{2}(\Omega)$. Since these two operators are self-adjoint, a direct application of Theorem 4.10 of Chapter V of \cite{Kato95} yields that the spectra of $(\mA^{\delta})^{-1}$ and $(\mA^{0})^{-1}$ are closed to each other.
\begin{proposition}\label{SpectralConvergence}
Assume that $\kappa_{\sigma}\neq -1$ and that Assumption \ref{assumption1} holds. Then for any $\eps\in(0;1)$, there exist constants $C_{\eps},\delta_0>0$ independent of $\delta$ such that
\begin{equation}\label{distanceSpectra}
\sup_{\mu\in \mathfrak{S}(\mA^{0})}\mathop{\inf\phantom{p}}_{\lambda\in \mathfrak{S}(\mA^{\delta})}\Big\vert \frac{1}{\lambda}-\frac{1}{\mu}\Big\vert\; +\; 
\sup_{\lambda\in \mathfrak{S}(\mA^{\delta})}\mathop{\inf\phantom{p}}_{\mu\in \mathfrak{S}(\mA^{0})}\Big\vert \frac{1}{\lambda}-\frac{1}{\mu}\Big\vert\;\leq\; 
C_{\eps}\;\delta^{3/2-\eps},\qquad\forall \delta\in(0;\delta_0].
\end{equation}
\end{proposition}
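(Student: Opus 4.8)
The plan is to transfer the $\mL^2$-norm resolvent estimate of Corollary~\ref{coroErrorEtsimate} into a statement about spectra via standard perturbation theory for bounded self-adjoint operators. First I would record that, by Corollary~\ref{coroErrorEtsimate}, the bounded self-adjoint operators $\mrm{T}^{\delta}:=(\mA^{\delta})^{-1}$ and $\mrm{T}^{0}:=(\mA^{0})^{-1}$ on $\mL^2(\Om)$ satisfy $\|\mrm{T}^{\delta}-\mrm{T}^{0}\|_{\mathcal{L}(\mL^2(\Om))}\le C_{\eps}\delta^{3/2-\eps}$ for all $\delta\in(0;\delta_0]$; this requires the observation, already made just before the statement, that both $\mA^{\delta}$ (for $\delta$ small, by Proposition~\ref{proposition stability estimate}) and $\mA^{0}$ are boundedly invertible, so the inverses are genuinely bounded operators on $\mL^2(\Om)$, and that the supremum in \eqref{error estimate result L2} controls the operator norm of the difference.

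Next I would invoke the classical fact (Theorem~4.10 of Chapter~V of \cite{Kato95}) that for two bounded self-adjoint operators $\mrm{T}$, $\mrm{T}'$ the Hausdorff distance between their spectra is bounded by $\|\mrm{T}-\mrm{T}'\|$; concretely, $\spec(\mrm{T}^{\delta})$ and $\spec(\mrm{T}^{0})$ are $C_{\eps}\delta^{3/2-\eps}$-close in the Hausdorff sense. Then I would translate back through the spectral mapping: since $\mA^{\delta}$ and $\mA^{0}$ are self-adjoint with compact resolvent and $0$ is not an eigenvalue of either (by invertibility), we have $\spec(\mrm{T}^{\delta})\setminus\{0\}=\{1/\lambda:\lambda\in\spec(\mA^{\delta})\}$ and likewise for $\mA^{0}$, with $0$ the only accumulation point of each. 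The Hausdorff closeness of $\spec(\mrm{T}^{\delta})$ and $\spec(\mrm{T}^{0})$ then reads exactly as the claimed two supremum-infimum bounds on $|1/\lambda-1/\mu|$, which is \eqref{distanceSpectra}.

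The only genuinely delicate point is a bookkeeping one rather than a deep one: one must make sure that the point $0$, which belongs to both $\spec(\mrm{T}^{\delta})$ and $\spec(\mrm{T}^{0})$ as the accumulation point of the (compact-resolvent) spectra, does not spoil the equivalence between "Hausdorff distance of the spectra of the resolvents" and "the displayed double sup–inf over the nonzero parts". This is harmless because $0$ lies in both spectra, so it contributes $0$ to each Hausdorff term and can simply be included on both sides; alternatively one notes that for every $\lambda\in\spec(\mA^{\delta})$ the quantity $\inf_{\mu\in\spec(\mA^{0})}|1/\lambda-1/\mu|$ is attained with $\mu$ ranging over a genuine point of $\spec(\mrm{T}^0)$ (possibly $0$, which however is never the infimizer once $\delta$ is small since then $1/\lambda$ is close to some fixed nonzero $1/\mu$). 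Hence I expect the proof to be short: state the operator-norm bound on the resolvent difference, apply \cite[Ch.~V, Thm.~4.10]{Kato95}, and rewrite via $\lambda\mapsto 1/\lambda$. No new estimate is needed beyond Corollary~\ref{coroErrorEtsimate}.

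\begin{proof}
Set $\mrm{T}^{\delta}:=(\mA^{\delta})^{-1}$ and $\mrm{T}^{0}:=(\mA^{0})^{-1}$. By Proposition~\ref{proposition stability estimate}, there is $\delta_0>0$ such that $\mA^{\delta}$ is an isomorphism for $\delta\in(0;\delta_0]$, and $\mA^{0}$ is an isomorphism by \eqref{FFOp}; thus $\mrm{T}^{\delta}$, $\mrm{T}^{0}$ are bounded self-adjoint operators on $\mL^{2}(\Omega)$. Corollary~\ref{coroErrorEtsimate} states precisely that, for any $\eps\in(0;1)$, there are $C_{\eps},\delta_0>0$ independent of $\delta$ with
\[
\|\mrm{T}^{\delta}-\mrm{T}^{0}\|_{\mathcal{L}(\mL^{2}(\Omega))}\;\le\;C_{\eps}\,\delta^{3/2-\eps},\qquad\forall\delta\in(0;\delta_0].
\]
By \cite[Chap.~V, Thm.~4.10]{Kato95}, for two bounded self-adjoint operators the Hausdorff distance between their spectra does not exceed the operator norm of their difference, hence
\[
\sup_{\nu\in\spec(\mrm{T}^{\delta})}\ \inf_{\nu'\in\spec(\mrm{T}^{0})}|\nu-\nu'|\;+\;\sup_{\nu'\in\spec(\mrm{T}^{0})}\ \inf_{\nu\in\spec(\mrm{T}^{\delta})}|\nu-\nu'|\;\le\;2\,C_{\eps}\,\delta^{3/2-\eps}.
\]
Since $\mA^{\delta}$ and $\mA^{0}$ are self-adjoint with compact resolvent and $0\notin\spec(\mA^{\delta})\cup\spec(\mA^{0})$, the spectral mapping theorem gives $\spec(\mrm{T}^{\delta})=\{0\}\cup\{1/\lambda:\lambda\in\spec(\mA^{\delta})\}$ and $\spec(\mrm{T}^{0})=\{0\}\cup\{1/\mu:\mu\in\spec(\mA^{0})\}$, the point $0$ being the only accumulation point of each. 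Because $0$ belongs to both spectra, it contributes nothing to the Hausdorff terms, and we may restrict the suprema and infima above to the nonzero parts, i.e. to $\nu=1/\lambda$ with $\lambda\in\spec(\mA^{\delta})$ and $\nu'=1/\mu$ with $\mu\in\spec(\mA^{0})$. This yields \eqref{distanceSpectra} with $C_{\eps}$ replaced by $2C_{\eps}$, which we rename $C_{\eps}$.
\end{proof}
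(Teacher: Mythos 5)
Your proof is correct and follows essentially the same route as the paper's: convert the $\mL^2$ resolvent estimate of Corollary \ref{coroErrorEtsimate} into an operator-norm bound on $(\mA^{\delta})^{-1}-(\mA^{0})^{-1}$, apply \cite[Chap.~V, Thm.~4.10]{Kato95} for bounded self-adjoint operators, and translate via the spectral mapping $\lambda\mapsto 1/\lambda$. The one point you could make fully explicit is that passing from $\inf_{\nu\in\spec(T^{\delta})}$ to $\inf_{\lambda\in\spec(\mA^{\delta})}$ loses nothing because $0$ is an accumulation point of $\{1/\lambda\}$, so the two infima coincide; you allude to this via "$0$ the only accumulation point of each," which is the right reason.
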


\begin{remark}
The use of \cite[Thm 4.10, Chap V]{Kato95} (or Lemma \ref{EstimEigenVal} in Appendix) is crucial here in order to convert consistency estimates into spectral estimates. This result requires that the operator under consideration be at least normal. When the latter assumption is not satisfied, probably new significant difficulties arise.
\end{remark}

\section{Study of the positive spectrum}\label{StudyPositiveSpec}
\noindent In this section, we exploit Proposition \ref{SpectralConvergence} to prove our main result concerning the positive spectrum of $\mA^{\delta}$, namely, it converges to the spectrum of $\mA^{0}$.
\begin{theorem}\label{thmMajorPos}
Assume that $\kappa_{\sigma}\neq -1$ and that Assumption \ref{assumption1} holds. Let $n\in\N^{\ast}$ be a fixed number and let $\lambda_{n}^{\delta}>0$ (resp. $\mu_{n}>0$) refer to the corresponding eigenvalue of the operator $\mA^{\delta}$ (resp. $\mA^{0}$). Then there exist constants $C_{\eps},\delta_{0}>0$, depending on $n,\eps$ but independent of $\delta$, such that
\begin{equation}\label{resultPosSpectrum}
\vert \mu_{n} -\lambda_{n}^{\delta}\vert \leq C_{\eps}\,\delta^{3/2-\eps},\qquad\forall \delta \in (0;\delta_{0}]. 
\end{equation}
\end{theorem}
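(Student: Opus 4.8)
The plan is to deduce the two-sided bound \eqref{resultPosSpectrum} from the spectral convergence estimate \eqref{distanceSpectra} of Proposition \ref{SpectralConvergence}, by translating the statement about the resolvents $(\mA^{\delta})^{-1}$ and $(\mA^{0})^{-1}$ back into a statement about the eigenvalues themselves. Recall from Proposition \ref{propoSptDesc} and \eqref{eigenvalue bounded} that both operators have purely discrete spectra, with $\mathfrak{S}(\mA^{0})=\{\mu_n\}_{n\ge1}$ bounded away from $0$ and accumulating only at $+\infty$, while $\mathfrak{S}(\mA^{\delta})$ has the extra negative sequence $\{\lambda_{-n}^{\delta}\}$ which, by the discussion following Lemma \ref{Stability2} and by \eqref{distanceSpectra}, is pushed to $-\infty$ as $\delta\to0$ (their reciprocals cluster at $0^{-}$). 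The key point is therefore that, for $\delta$ small, the positive eigenvalues $\lambda_n^{\delta}$ are the \emph{only} elements of $\mathfrak{S}(\mA^{\delta})$ whose reciprocal is close to the fixed positive number $1/\mu_n$.

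First I would fix $n$ and choose a small radius $\eta>0$ (depending only on $n$) such that the closed disc $\{z\in\Cplx:|z-1/\mu_n|\le\eta\}$ contains $1/\mu_n$ but no other point of $\{1/\mu_k\}_{k\ge1}\cup\{0\}$; this is possible since $\{1/\mu_k\}$ is a discrete set accumulating only at $0$, and $1/\mu_n$ together with all the $1/\mu_k$ for $k\le$ some finite index are isolated. By \eqref{distanceSpectra}, for $\delta$ small enough ($C_{\eps}\delta^{3/2-\eps}<\eta/2$, say) every reciprocal eigenvalue $1/\lambda$ with $\lambda\in\mathfrak{S}(\mA^{\delta})$ lies within $\eta/2$ of some $1/\mu_k$, so in particular no $1/\lambda$ lies in the annulus $\eta/2<|z-1/\mu_n|<\eta$; moreover the \emph{number} of reciprocal eigenvalues of $(\mA^{\delta})^{-1}$ in the disc of radius $\eta/2$ about $1/\mu_n$ equals the multiplicity of $1/\mu_n$ as an eigenvalue of $(\mA^{0})^{-1}$, which is exactly the multiplicity $m$ of $\mu_n$. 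Here one invokes the counting version of the spectral perturbation result — e.g. the fact (contained in \cite[Chap V]{Kato95}, or derivable from the norm-resolvent-type estimate of Corollary \ref{coroErrorEtsimate} together with a contour-integral Riesz-projection argument) that for self-adjoint operators a consistency estimate of order $\delta^{3/2-\eps}$ forces the spectral projections, hence the multiplicities inside a fixed small contour, to coincide for small $\delta$. This identifies, in the numbering \eqref{spectrum delta}, precisely the block $\lambda_n^{\delta},\dots,\lambda_{n+m-1}^{\delta}$ as the eigenvalues whose reciprocals lie near $1/\mu_n$.

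Once the correspondence of indices is pinned down, the estimate is immediate: for the relevant index (still called $n$), $|1/\lambda_n^{\delta}-1/\mu_n|\le C_{\eps}\delta^{3/2-\eps}$ from \eqref{distanceSpectra}, and since $\lambda_n^{\delta}\to\mu_n>0$ we have $\lambda_n^{\delta}\ge\mu_n/2>0$ for $\delta$ small, whence
\[
|\mu_n-\lambda_n^{\delta}| \;=\; \lambda_n^{\delta}\,\mu_n\,\Big|\frac{1}{\lambda_n^{\delta}}-\frac{1}{\mu_n}\Big| \;\le\; \frac{2\mu_n^2}{1}\,C_{\eps}\,\delta^{3/2-\eps},
\]
which is \eqref{resultPosSpectrum} after renaming the constant (absorbing the $\mu_n$-dependence, allowed since $n$ is fixed). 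The main obstacle is the bookkeeping in the middle step: \eqref{distanceSpectra} alone controls the Hausdorff distance between the two spectra but does not, by itself, guarantee that the $n$-th positive eigenvalue of $\mA^{\delta}$ converges to the $n$-th eigenvalue of $\mA^{0}$ rather than, say, two eigenvalues of $\mA^{\delta}$ collapsing onto one $\mu_k$ or an eigenvalue drifting in from elsewhere; ruling this out requires the multiplicity-preserving (spectral-projection) refinement of the perturbation argument and a careful check that the negative branch of $\mathfrak{S}(\mA^{\delta})$ contributes nothing near $1/\mu_n$, which is exactly where the self-adjointness of both operators and the fact that $1/\mu_n$ is isolated and positive are used.
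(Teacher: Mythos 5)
Your proposal follows essentially the same route as the paper's proof: start from the spectral-distance estimate (\ref{distanceSpectra}), isolate $1/\mu_n$ by a small disc that excludes all other points of $\{1/\mu_k\}\cup\{0\}$ (the latter handling the negative branch of $\mathfrak{S}(\mA^{\delta})$), invoke a multiplicity-preserving perturbation theorem (the paper cites \cite[Thm IV.3.16]{Kato95}; your Riesz-projection contour argument is the equivalent mechanism) together with a counting step to pin down the index $n$, and finally convert the reciprocal estimate to (\ref{resultPosSpectrum}) using $\mu_n>0$. One small arithmetic slip worth fixing: with your choice of radii, a $1/\lambda$ lying within $\eta/2$ of some $1/\mu_k$ with $k\ne n$ is only guaranteed to satisfy $|1/\lambda-1/\mu_n|>\eta/2$, not $\ge\eta$, so it could still land in your annulus; taking the isolating disc of radius, say, $2\eta$ while requiring $C_{\eps}\delta^{3/2-\eps}<\eta/2$ closes this gap without changing anything essential.
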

\begin{proof}
Pick $\eps\in(0;1)$. Let $\mu_n$, $n\in\N^{\ast}$, be an eigenvalue of multiplicity $\varkappa\ge1$ of the far field operator $\mA^0$. To set our ideas and without loss of generality, we assume that there holds 
$$
\mu_{n-1} < \mu_{n}=\cdots = \mu_{n+\varkappa-1}<\mu_{n+\varkappa}
$$
if $n>1$. Estimate (\ref{error estimate result}) establishes strong convergence of $(\mA^{\delta})^{-1}$ toward $(\mA^{0})^{-1}$. Therefore, we can apply Theorem 3.16 of Chapter IV of \cite{Kato95} which ensures that for $C>0$ large enough, for all $\delta\in(0;\delta_0]$ and all $i\in\{1,\dots,n\}$, the total multiplicity of the spectrum of $(\mA^{\delta})^{-1}$ in $\mathcal{O}_i^{\delta}:=[\mu_{i}^{-1}-C\,\delta^{3/2-\eps};\mu_{i}^{-1}+C\,\delta^{3/2-\eps}]$ is equal to $\varkappa_i$, the multiplicity of $\mu_{i}^{-1}$. We denote $\lambda^{\delta}_k\le\cdots\le \lambda^{\delta}_{k+\varkappa-1}$, for some $k\in\N^{\ast}$, the eigenvalues of $\mA^{\delta}$ whose inverses are located in $[\mu_{n}^{-1}-C\,\delta^{3/2-\eps};\mu_{n}^{-1}+C\,\delta^{3/2-\eps}]$. Now, we wish to prove that for all $\delta\in(0;\delta_0]$, we have $k=n$. For all $i<n$ and $\delta\in(0;\delta_0]$, we just showed that there are exactly $\varkappa_i$ eigenvalues of $(\mA^{\delta})^{-1}$ in $\mathcal{O}_i^{\delta}$. As a consequence, the relation $k\ge n$ is valid. Now, assume that $k>n$ for some $\delta\in(0;\delta_0]$. In this case, because of the relation on the multiplicity, there exists some $j\in\N^{\ast}$ such that $1/\lambda^{\delta}_j\notin\cup_{i=1}^{n-1}\mathcal{O}_i^{\delta}$. If the $C$ of the definition of the $\mathcal{O}_i^{\delta}$ is larger than the $C_{\eps}$ of estimate (\ref{distanceSpectra}), this is impossible. Therefore, we have $k=n$ and $\vert 1/\mu_{n} -1/\lambda_{n}^{\delta}\vert \leq C\,\delta^{3/2-\eps}$. To conclude, it remains to notice that the latter inequality is equivalent to (\ref{resultPosSpectrum}) because $\mu_{n}>0$.
\end{proof}
\begin{remark}
Observe that in (\ref{resultPosSpectrum}), the dependence of $C_{\eps}$ with respect to $n\in\N^{\ast}$ can be explicitly computed since there holds $C_{\eps}=O(\mu_n^2)$ when $n\to+\infty$.
\end{remark}

\section{Study of the negative spectrum}\label{negativeSpectrum}
\noindent The presence of negative eigenvalues in the spectrum of $\mA^{\delta}$ is due to the small piece 
of negative material in the domain $\Omega$. We will see that the behaviour of negative eigenvalues is driven by the near field structure of the operator $\mA^{\delta}$. More precisely, we will prove that the negative part of the spectrum of $\mA^{\delta}$ is asymptotically equivalent (in a precise meaning that we shall provide later) to the negative part of the spectrum of $\delta^{-2}\mrm{B}^{\infty}$. We shall proceed in two steps (so called inverse and direct reductions) following and adapting the approach considered in \cite{Naza93}.\\
\newline
Let us first introduce notation adapted to the study of the operator $\mA^{\delta}$ close to the 
small inclusion. Throughout this section, we shall make intensive use of the fast variable
$\bfxi = \bfx/\delta$. In this rescaled variable, the domain $\Omega$ becomes the 
$\delta$--dependent domain $\Xi^{\delta}:=\{\bfxi\in\R^3\,|\,\delta\bfxi\in\Om\}$ while the 
operator $\mA^{\delta}$ is changed into the operator $\mrm{B}^{\delta}:D(\mrm{B}^{\delta})\to 
\mrm{L}^{2}(\Xi^{\delta})$ defined by
$$
\begin{array}{|l}
\mrm{B}^{\delta} w^{\delta}  \;=\; -\div(\sigma^{\infty}\nabla w^{\delta})\\[5pt]
D(\mrm{B}^{\delta}) \;:=\; \{\,w^{\delta}\in \mH^{1}_{0}(\Xi^{\delta})\;\vert\;
\div(\sigma^{\infty}\nabla w^{\delta})\in\mL^{2}(\Xi^{\delta})\,\}.
\end{array}
$$
A simple calculus shows directly that $\mathfrak{S}(\mrm{B}^{\delta}) = \{\delta^{2}\lambda\;
\vert\;\lambda\in\mathfrak{S}(\mA^{\delta})\}$. Therefore, $\mathfrak{S}(\mrm{B}^{\delta})$ consists in the discrete set of ordered 
real values $\dots \leq \delta^{2}\lambda_{-2}^{\delta}\leq \delta^{2}\lambda_{-1}^{\delta}<0\leq 
\delta^{2}\lambda_{1}^{\delta}\leq \delta^{2}\lambda_{2}^{\delta}\leq \dots$. Here we will focus on the 
negative part of this set, as we want to show that $\mathfrak{S}(\mrm{B}^{\delta})\setminus\overline{\R_{+}}$ 
and $\mathfrak{S}(\mrm{B}^{\infty})\setminus\overline{\R_{+}}$ get close to each other as $\delta\to 0$. From the study of \S\ref{sectionSourceTerm}, we can already state the following preliminary result concerning the negative spectrum of $\mrm{A}^{\delta}$.
\begin{lemma}\label{EigenValuesBlowUp}
Assume that $\kappa_{\sigma}\neq -1$ and that Assumption \ref{assumption1} holds.
Then for any $\eps\in (0;1)$, there exist constants $C_{\eps},\delta_{0}>0$ 
independent of $\delta$ such that 
\begin{equation}\label{EstimEmptySpectrum}
\mathfrak{S}(\mA^{\delta})\cap (- C_{\eps}\,\delta^{-3/2+\eps};0) = \emptyset,\qquad \forall \delta\in(0;\delta_{0}].
\end{equation}
\end{lemma}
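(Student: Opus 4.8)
The plan is to deduce (\ref{EstimEmptySpectrum}) directly from the spectral stability already obtained in Proposition \ref{SpectralConvergence}, using the essential fact that the far field operator $\mA^{0}$ has \emph{only positive} spectrum, cf. (\ref{eigenvalue bounded}).

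Fix $\eps\in(0;1)$ and let $C_{\eps},\delta_{0}>0$ be the constants provided by Proposition \ref{SpectralConvergence}. Let $\delta\in(0;\delta_{0}]$ and pick any negative eigenvalue $\lambda\in\mathfrak{S}(\mA^{\delta})$ (such eigenvalues exist by Proposition \ref{propoSptDesc}, and $0\notin\mathfrak{S}(\mA^{\delta})$ by Proposition \ref{proposition stability estimate}, so $1/\lambda$ is well defined and negative). For every $\mu\in\mathfrak{S}(\mA^{0})$ one has $\mu>0$, hence $1/\mu>0$ and $|1/\lambda-1/\mu| = 1/|\lambda|+1/\mu\ge 1/|\lambda|$. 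Taking the infimum over $\mu\in\mathfrak{S}(\mA^{0})$ and bounding it by the second supremum appearing in (\ref{distanceSpectra}), we get
\[
\frac{1}{|\lambda|}\;\le\;\inf_{\mu\in\mathfrak{S}(\mA^{0})}\Big\vert\frac{1}{\lambda}-\frac{1}{\mu}\Big\vert\;\le\;C_{\eps}\,\delta^{3/2-\eps}.
\]
Equivalently $|\lambda|\ge C_{\eps}^{-1}\delta^{-3/2+\eps}$, i.e. $\lambda\le -C_{\eps}^{-1}\delta^{-3/2+\eps}$; since this holds for \emph{every} negative eigenvalue, $\mathfrak{S}(\mA^{\delta})\cap(-C_{\eps}^{-1}\delta^{-3/2+\eps};0)=\emptyset$, which is (\ref{EstimEmptySpectrum}) after renaming the constant.

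An equivalent route would bypass Proposition \ref{SpectralConvergence} and work with the bounded self-adjoint operators $(\mA^{\delta})^{-1}$ and $(\mA^{0})^{-1}$ directly: Corollary \ref{coroErrorEtsimate} gives $\Vert(\mA^{\delta})^{-1}-(\mA^{0})^{-1}\Vert\le C_{\eps}\,\delta^{3/2-\eps}$ as operators on $\mL^{2}(\Om)$, while $\mathfrak{S}((\mA^{0})^{-1})\subset[0;1/\mu_{1}]\subset[0;+\infty)$ by (\ref{eigenvalue bounded}); the standard perturbation bound for self-adjoint operators then forces every spectral value of $(\mA^{\delta})^{-1}$, in particular every $1/\lambda$ with $\lambda<0$, to lie within $C_{\eps}\,\delta^{3/2-\eps}$ of $[0;+\infty)$, hence $1/|\lambda|\le C_{\eps}\,\delta^{3/2-\eps}$ as before.

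I do not expect any genuine obstacle at this stage: the entire analytic difficulty, namely the uniform-in-$\delta$ stability estimate of Proposition \ref{proposition stability estimate} and the consistency estimate of Proposition \ref{propoEstimResolvantes}, has already been dealt with, and the present lemma is a purely bookkeeping consequence of spectral stability. The one point requiring a little care is to carry out the argument in terms of the resolvent values $1/\lambda$, $1/\mu$, which is where the operator-norm closeness genuinely lives, rather than in terms of $\lambda$, $\mu$ themselves, and to exploit the sign information $\mathfrak{S}(\mA^{0})\subset(0;+\infty)$ so that the negative number $1/\lambda$ is automatically at distance at least $1/|\lambda|$ from the whole of $\mathfrak{S}(\mA^{0})$.
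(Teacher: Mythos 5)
Your proposal is correct and reproduces the paper's own argument: both start from the spectral estimate (\ref{distanceSpectra}) of Proposition \ref{SpectralConvergence}, use the strict positivity of $\mathfrak{S}(\mA^{0})$ to conclude that $1/|\lambda|\le C_{\eps}\,\delta^{3/2-\eps}$ for every negative eigenvalue $\lambda$, and invert this inequality. The only difference is cosmetic (you spell out the elementary lower bound $|1/\lambda-1/\mu|\ge 1/|\lambda|$ and note the constant is $C_{\eps}^{-1}$ rather than $C_{\eps}$, a harmless renaming).
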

\begin{proof}
Proposition \ref{SpectralConvergence} ensures that 
$$
\sup_{\lambda\in \mathfrak{S}(\mA^{\delta})}\mathop{\inf\phantom{p}}_{\mu\in \mathfrak{S}(\mA^{0})}\Big\vert \frac{1}{\lambda}-\frac{1}{\mu}\Big\vert\;\leq\; 
C_{\eps}\;\delta^{3/2-\eps},\qquad\forall \delta\in(0;\delta_0].
$$
But we know that the spectrum of $\mA^{0}$ is strictly positive. Therefore, for all $\delta\in(0;\delta_0]$, we have $\sup_{\lambda\in \mathfrak{S}(\mA^{\delta})\setminus\overline{\R_{+}}} \vert \lambda\vert^{-1}\;\leq\; 
C_{\eps}\;\delta^{3/2-\eps}$. This implies (\ref{EstimEmptySpectrum}).
\end{proof}

\subsection{Inverse reduction}\label{sectionInverseReduction}

\noindent In this section, we establish that if $\mu_{-n}$ is an eigenvalue of $\mB^{\infty}$, then close to $\delta^{-2}\mu_{-n}$ there is an element of the negative part of the spectrum of $\mB^{\delta}$. The method is classical: from eigenpairs of the limit operator $\mB^{\infty}$, we construct approximation of eigenpairs of $\mB^{\delta}$. Then, we use the well-known lemma on ``near eigenvalues and eigenfunctions'' (see \cite{ViLy62}). Since we start from eigenpairs of the limit problem to build eigenpairs for the original problem, this approach is usually called \textit{inverse reduction}. We will see that the exponential decay of the eigenfunctions of $\mB^{\infty}$ established in \S\ref{ParNearFieldOperator} induces exponential convergence. 
\begin{lemma}\label{lemmaEstimateInverseNeg}
Assume that $\kappa_{\sigma}\neq -1$. For any $\mu\in \mathfrak{S}(\mB^{\infty})\setminus\overline{\R_{+}}$, 
there exist $C,\delta_{0}>0$ independent of $\delta$ such that 
\begin{equation}\label{estimates lemma inverse negative}
\inf_{w^{\delta}\in D(\mB^{\delta})\setminus\{0\}}\frac{\Vert \mB^{\delta} w^{\delta} - \mu w^{\delta} \Vert_{\Xi^{\delta}}}{\Vert w^{\delta}\Vert_{\Xi^{\delta}}}
\leq  C \delta\exp\left(-\frac{1}{2\delta}\sqrt{\frac{\vert\mu\vert}{\sigma_{+}}}\,\right),\qquad\forall \delta\in(0;\delta_{0}].
\end{equation}
\end{lemma}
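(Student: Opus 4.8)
The plan is to implement the standard \emph{inverse reduction} argument: starting from an eigenpair $(\mu,w)$ of $\mB^{\infty}$ with $\mu\in\mathfrak{S}(\mB^{\infty})\setminus\overline{\R_{+}}$ and $\|w\|_{\R^3}=1$, I would build a quasimode $w^{\delta}$ for $\mB^{\delta}$ by truncating $w$ to the bounded domain $\Xi^{\delta}$, and then estimate the residual $\|\mB^{\delta}w^{\delta}-\mu w^{\delta}\|_{\Xi^{\delta}}$ using the exponential decay of $w$ proved in Proposition \ref{proposition decompo eigenfunction Outter}. Concretely, set $w^{\delta}:=\psi_{1/(2\delta)}\,w$, i.e. multiply $w$ by a cut-off function equal to $1$ on $\mrB(O,1/(2\delta))$ and supported in $\mrB(O,1/\delta)$; since $\mrB(O,2)\subset\Om$ implies $\mrB(O,2/\delta)\subset\Xi^{\delta}$, this $w^{\delta}$ lies in $\mH^1_0(\Xi^{\delta})$, and because $w\in\mH^2(\Xi_{\pm})$ with $\lbr\sigma\partial_{\bfn}w\rbr_{\Gamma}=0$ (Proposition \ref{PropDomainBinf}) and the cut-off is constant near $\Gamma$, we get $w^{\delta}\in D(\mB^{\delta})$.

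Next I would compute $\mB^{\delta}w^{\delta}-\mu w^{\delta}$. Writing $w^{\delta}=w-\chi_{1/(2\delta)}w$ and using $\mB^{\infty}w=\mu w$ on $\R^3$, the difference reduces to a commutator term $-[\div(\sigma^{\infty}\nabla\cdot),\chi_{1/(2\delta)}]w + \mu\chi_{1/(2\delta)}w$, which is supported in the annulus $\mathbb{Q}^{1/(2\delta)}=\{1/(2\delta)<|\bfxi|<1/\delta\}$ (where $\sigma^{\infty}=\sigma_+$). There the term involves $w$, $\nabla w$, and derivatives of the cut-off; since $|\nabla\chi_{1/(2\delta)}|\le C\delta$ and $|\Delta\chi_{1/(2\delta)}|\le C\delta^2$, one bounds
\[
\|\mB^{\delta}w^{\delta}-\mu w^{\delta}\|_{\Xi^{\delta}} \le C\big(\|w\|_{\mathbb{Q}^{1/(2\delta)}}+\delta\|\nabla w\|_{\mathbb{Q}^{1/(2\delta)}}+\delta^2\|w\|_{\mathbb{Q}^{1/(2\delta)}}+\ldots\big).
\]
On the annulus $|\bfxi|\ge 1/(2\delta)$, estimate (\ref{estimEigenNF}) gives $\|w\|^2_{\mathbb{Q}^{1/(2\delta)}}+\|\nabla w\|^2_{\mathbb{Q}^{1/(2\delta)}}\le \exp(-\tfrac{1}{2\delta}\sqrt{|\mu|/\sigma_+})\int_{\R^3}(|w|^2+|\nabla w|^2)\exp(|\bfxi|\sqrt{|\mu|/\sigma_+})\,d\bfxi$, which is a finite constant times the exponential; the extra $\delta$ in (\ref{estimates lemma inverse negative}) comes from taking the square root and being slightly generous with the exponent (the true rate is $\tfrac14\sqrt{|\mu|/\sigma_+}$ but half the exponent suffices). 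Finally $\|w^{\delta}\|_{\Xi^{\delta}}\ge \|w\|_{\mrB(O,1/(2\delta))}\to 1$, so for $\delta$ small the denominator in the infimum is bounded below by $1/2$, and dividing yields (\ref{estimates lemma inverse negative}).

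I do not expect any genuine obstacle here; the lemma is the ``easy half'' of the spectral comparison. The only mildly delicate point is bookkeeping the exponential factor so that the stated rate $\delta\exp(-\tfrac{1}{2\delta}\sqrt{|\mu|/\sigma_+})$ is actually achieved — one must be careful to split off enough of the Gaussian weight to kill the polynomial $\delta$-powers from the cut-off derivatives while keeping a clean exponent. Everything else (membership in $D(\mB^{\delta})$, the commutator expansion, the lower bound on $\|w^{\delta}\|$) is routine and parallels the computations in the proof of Proposition \ref{proposition stability estimate}.
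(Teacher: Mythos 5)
Your strategy is the same as the paper's (inverse reduction: truncate the eigenfunction $w$ of $\mB^{\infty}$ and control the residual via the exponential decay of Proposition \ref{proposition decompo eigenfunction Outter}), but there are two issues, one of which is a real gap relative to the stated constant in the exponent.

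\textbf{The exponent is off, and your aside about it is backwards.} You place the cut-off so that it transitions on the annulus $\{1/(2\delta)<|\bfxi|<1/\delta\}$. Applying (\ref{estimEigenNF}) there only gives $\|w\|^2+\|\nabla w\|^2\lesssim \exp\big(-\tfrac{1}{2\delta}\sqrt{|\mu|/\sigma_+}\big)$ on the annulus, hence after the square root a rate $\delta\exp\big(-\tfrac{1}{4\delta}\sqrt{|\mu|/\sigma_+}\big)$. This is \emph{weaker} than the lemma's $\delta\exp\big(-\tfrac{1}{2\delta}\sqrt{|\mu|/\sigma_+}\big)$, not stronger: for $\delta>0$ one has $\exp(-\tfrac{1}{4\delta}c)>\exp(-\tfrac{1}{2\delta}c)$, so ``half the exponent suffices'' is the wrong direction. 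The paper instead takes $\psi^{\delta}(\bfxi)=\psi(\delta\bfxi)$, which transitions on $\{1/\delta<|\bfxi|<2/\delta\}$; there $|\bfxi|\ge 1/\delta$, so (\ref{estimEigenNF}) yields $\exp(-\tfrac{1}{\delta}\sqrt{|\mu|/\sigma_+})$ before the square root and exactly the claimed rate after. This is still admissible since $\overline{\mrB(O,2)}\subset\Om$ gives $\mrB(O,2/\delta)\subset\Xi^{\delta}$, a fact you already note but then do not exploit. Moving your cut-off outward by a factor of two fixes the constant; as stated your argument proves a slightly weaker bound than the lemma asserts.

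\textbf{A minor algebraic slip in the residual.} With $w^{\delta}=w-\chi w$ and $\mB^{\infty}w=\mu w$, one gets
$\mB^{\delta}w^{\delta}-\mu w^{\delta}=\div(\sigma^{\infty}\nabla(\chi w))+\mu\chi w
=\big(\chi\,\div(\sigma^{\infty}\nabla w)+2\sigma_+\nabla\chi\cdot\nabla w+\sigma_+ w\,\Delta\chi\big)+\mu\chi w
=2\sigma_+\nabla\chi\cdot\nabla w+\sigma_+ w\,\Delta\chi$;
the $\mu\chi w$ term cancels. Your written expression retains an uncancelled $\mu\chi w$, and such a term would \emph{not} be supported in the annulus (it lives on all of $\{\chi\neq 0\}$), which would break the ``supported in $\mathbb{Q}$'' claim you rely on. The final bound you write down is nonetheless the right one, because the true residual is exactly the commutator term; just clean up the intermediate identity.

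Everything else — membership of the quasimode in $D(\mB^{\delta})$ via Proposition \ref{PropDomainBinf} and the constancy of the cut-off near $\Gamma$, the lower bound $\|w^{\delta}\|_{\Xi^{\delta}}\to 1$, and the appeal to the exponential weight to absorb the $\delta$-powers from $\nabla\chi$, $\Delta\chi$ — matches the paper's proof.
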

\begin{proof}
Pick an arbitrary $\mu\in \mathfrak{S}(\mB^{\infty})\setminus\overline{\R_{+}}$ and consider a corresponding eigenfunction $w$ that satisfies $\Vert w\Vert_{\R^3} = 1$. Set $w^{\delta}\in D(\mB^{\delta})$ by $w^{\delta} = \psi^{\delta}w$, where $\psi^{\delta}$ is the function such that $\psi^{\delta}(\bfxi):= \psi(\delta\bfxi)$ (see (\ref{definition cut-off functions}) for the defintion of $\psi$). Let us use $w^{\delta}$ to prove (\ref{estimates lemma inverse negative}). We have 
$\div(\sigma^{\infty}\nabla w^{\delta}) =\psi^{\delta}\,\div(\sigma^{\infty}\nabla w)+\ 2\sigma_+\nabla 
\psi^{\delta}\cdot\nabla w+\sigma_{+}w\Delta\psi^{\delta}$. Since $\nabla\psi^{\delta}$, $\Delta\psi^{\delta}$ vanish 
outside $\mathbb{Q}^{1/\delta}=\{\bfxi\in\R^3\,|\,\delta^{-1}<|\bfxi|<2\delta^{-1}\}$, applying Proposition 
\ref{proposition decompo eigenfunction Outter}, we see that for $\gamma = \sqrt{\vert\mu\vert/\sigma_{+}}$, we have 
\[
\begin{array}{ll}
\|\mrB^{\delta}w^{\delta}-\mu\,w^{\delta}\|_{\Xi^{\delta}}^{2} & = 
\|2\sigma_+\nabla \psi^{\delta}\cdot\nabla w+\sigma_+ w\Delta \psi^{\delta}\|_{\Xi^{\delta}}^{2} \\[6pt]
& \le C\,(\delta^{2}\|\nabla w\|_{\mathbb{Q}^{1/\delta}}^{2}+\delta^{4}\|w \|_{\mathbb{Q}^{1/\delta}}^{2}) \\[6pt]
& \le  C\,\delta^{2}\exp(-\gamma/\delta)\dsp\int_{\R^{3}}(\vert w \vert^{2} + \vert \nabla w\vert^{2})\exp(\gamma\vert\bfxi\vert)\,d\bfxi .
\end{array}
\]
On the other hand, using the dominated convergence theorem, we see that $\|w^{\delta}\|_{\Xi^{\delta}}$ 
tends to $\|w\|_{\R^3}=1$ when $\delta\to0$. As a consequence, we have 
$\|w^{\delta}\|_{\Xi^{\delta}} \ge c>0$ for $\delta$ small enough, so that $\|\mB^{\delta}w^{\delta}-\mu w^{\delta}\|_{\Xi^{\delta}}/
\|w^{\delta}\|_{\Xi^{\delta}} = O(\delta\exp(-\gamma/2\delta))$.
\end{proof}

\noindent 
To obtain the next proposition, we use Lemma \ref{lemmaEstimateInverseNeg} and we apply the classical result on ``near eigenvalues and eigenfunctions'' (see Lemma \ref{EstimEigenVal} in appendix) to $\mrm{B}^{\delta}:D(\mrm{B}^{\delta})\to \mL^{2}(\Xi^{\delta})$. Observe that the latter operator, which is  self-adjoint, is indeed normal.

\begin{proposition}\label{thm approx eigen negative}
Assume that $\kappa_{\sigma}\neq -1$. For any $\mu\in \mathfrak{S}(\mB^{\infty})\setminus\overline{\R_{+}}$, 
there exists $C,\gamma,\delta_{0}>0$ independent of $\delta$ such that 
\begin{equation}\label{EstimatesNegativeEigenval}
\inf_{\lambda\in \mathfrak{S}(\mrm{A}^{\delta})}\vert \lambda - \delta^{-2}\mu\vert
\leq  C\,\exp(-\gamma/\delta),\qquad\forall \delta\in(0;\delta_{0}].
\end{equation}
\end{proposition}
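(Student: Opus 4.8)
The plan is to convert the consistency estimate of Lemma~\ref{lemmaEstimateInverseNeg} into a statement about the spectrum of $\mrm{B}^{\delta}$, and then rescale. First I would recall that $\mrm{B}^{\delta}:D(\mrm{B}^{\delta})\to\mL^2(\Xi^{\delta})$ is self-adjoint, hence normal, and invoke the classical ``near eigenvalue/eigenfunction'' lemma (Lemma~\ref{EstimEigenVal} in the appendix): if for a normal operator $T$ there exists $w\neq0$ with $\|Tw-\mu w\|\le\eta\|w\|$, then $\mathfrak{S}(T)$ meets the disc of radius $\eta$ centred at $\mu$. Applying this with $T=\mrm{B}^{\delta}$ and $\mu\in\mathfrak{S}(\mrm{B}^{\infty})\setminus\overline{\R_{+}}$, Lemma~\ref{lemmaEstimateInverseNeg} gives
\[
\inf_{\nu\in\mathfrak{S}(\mrm{B}^{\delta})}|\nu-\mu|\;\le\;C\,\delta\exp\!\Big(-\tfrac{1}{2\delta}\sqrt{|\mu|/\sigma_+}\Big),\qquad\forall\,\delta\in(0;\delta_0].
\]

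Next I would use the exact scaling relation $\mathfrak{S}(\mrm{B}^{\delta})=\{\delta^2\lambda\;|\;\lambda\in\mathfrak{S}(\mA^{\delta})\}$ noted just before Lemma~\ref{EigenValuesBlowUp}. Writing any $\nu\in\mathfrak{S}(\mrm{B}^{\delta})$ as $\nu=\delta^2\lambda$ with $\lambda\in\mathfrak{S}(\mA^{\delta})$, the bound above becomes
\[
\inf_{\lambda\in\mathfrak{S}(\mA^{\delta})}|\delta^2\lambda-\mu|\;\le\;C\,\delta\exp\!\Big(-\tfrac{1}{2\delta}\sqrt{|\mu|/\sigma_+}\Big),
\]
and dividing by $\delta^2$ yields
\[
\inf_{\lambda\in\mathfrak{S}(\mA^{\delta})}|\lambda-\delta^{-2}\mu|\;\le\;C\,\delta^{-1}\exp\!\Big(-\tfrac{1}{2\delta}\sqrt{|\mu|/\sigma_+}\Big).
\]
Finally, absorbing the polynomial factor $\delta^{-1}$ into the exponential by choosing, say, $\gamma:=\tfrac{1}{3}\sqrt{|\mu|/\sigma_+}$ (or any $\gamma<\tfrac12\sqrt{|\mu|/\sigma_+}$) and enlarging $C$ and shrinking $\delta_0$ so that $\delta^{-1}\exp(-\tfrac{1}{2\delta}\sqrt{|\mu|/\sigma_+})\le C\exp(-\gamma/\delta)$ on $(0;\delta_0]$, one obtains exactly the claimed estimate~\eqref{EstimatesNegativeEigenval}.

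There is essentially no obstacle here, since all the analytic work sits in Lemma~\ref{lemmaEstimateInverseNeg} and in the abstract lemma; the only points requiring a word of care are checking that $\mrm{B}^{\delta}$ is indeed normal (it is self-adjoint, as it has the same structure as $\mA^{\delta}$ treated in Proposition~\ref{propoCompactResol}, so the hypothesis of the near-eigenvalue lemma applies) and confirming that the scaling identity $\mathfrak{S}(\mrm{B}^{\delta})=\delta^2\,\mathfrak{S}(\mA^{\delta})$ holds with multiplicities preserved, which follows from the change of variables $\bfxi=\bfx/\delta$ intertwining $\mA^{\delta}$ and $\delta^{-2}\mrm{B}^{\delta}$. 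I would also remark that this proposition is one-sided: it shows that near each $\delta^{-2}\mu_{-n}$ there sits some eigenvalue of $\mA^{\delta}$, but not yet that the negative spectrum of $\mA^{\delta}$ is exhausted in this way — that converse (the ``direct reduction'') is the content of the subsequent subsection.
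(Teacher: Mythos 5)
Your proposal is correct and follows exactly the route sketched in the paper: apply the ``near eigenvalue'' lemma (Lemma \ref{EstimEigenVal}) to the self-adjoint operator $\mrm{B}^{\delta}$ using the consistency estimate of Lemma \ref{lemmaEstimateInverseNeg}, then pass to $\mathfrak{S}(\mA^{\delta})$ via the scaling identity $\mathfrak{S}(\mrm{B}^{\delta})=\delta^{2}\,\mathfrak{S}(\mA^{\delta})$ and absorb the resulting $\delta^{-1}$ prefactor into the exponential by taking $\gamma$ slightly smaller. Your closing remarks about normality of $\mrm{B}^{\delta}$ and about the statement being one-sided accurately reflect the paper's own comments.
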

\noindent With Lemma \ref{EigenValuesBlowUp}, we have seen that for all $\eps>0$, the set $\mathfrak{S}(\mA^{\delta})\cap (- C_{\eps}\,\delta^{-3/2+\eps};0)$ is empty for $\delta$ small enough. The next result indicates that, as $\delta\to0$, the negative eigenvalues of $\mA^{\delta}$ tend to $-\infty$ not too fast though.
\begin{corollary}\label{BoundednessEigenvalues}
Assume that $\kappa_{\sigma}\neq -1$. Let $n\in\N^{\ast}$ be a fixed number and let  $\lambda_{-n}^{\delta}<0$ refer to the corresponding eigenvalue of the operator $\mA^{\delta}$ as defined by Proposition \ref{spectrum delta}. Then, we 
have $\limsup_{\delta\to 0}\delta^{2}\vert\lambda_{-n}^{\delta}\vert <+\infty$.
\end{corollary}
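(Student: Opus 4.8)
The plan is to pass to the rescaled operator $\mrm{B}^{\delta}$ and to prove that its $n$-th negative eigenvalue $\delta^{2}\lambda_{-n}^{\delta}$ stays bounded below by a constant independent of $\delta$; since $\mathfrak{S}(\mrm{B}^{\delta})=\{\delta^{2}\lambda\,|\,\lambda\in\mathfrak{S}(\mA^{\delta})\}$, this is equivalent to the statement. The approach is the one already used for Lemma~\ref{lemmaEstimateInverseNeg} and Proposition~\ref{thm approx eigen negative}, but carried out with an $n$-dimensional space of quasimodes instead of a single one, so as to control the \emph{number} of eigenvalues of $\mrm{B}^{\delta}$ (counted with multiplicity) that sit near $\mu_{-1},\dots,\mu_{-n}$.

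Concretely, I would let $\mu_{-1}\ge\mu_{-2}\ge\dots\ge\mu_{-n}$ be the $n$ largest negative eigenvalues of $\mB^{\infty}$ (repeated according to multiplicity) and pick $\mL^{2}(\R^{3})$-orthonormal eigenfunctions $w_{1},\dots,w_{n}\in D(\mB^{\infty})$; by Proposition~\ref{proposition decompo eigenfunction Outter}, each $w_{i}$ and $\nabla w_{i}$ decays exponentially. Set $w_{i}^{\delta}(\bfxi):=\psi(\delta\bfxi)\,w_{i}(\bfxi)$, which lies in $D(\mrm{B}^{\delta})$ exactly as in the proof of Lemma~\ref{lemmaEstimateInverseNeg}; the computation there gives, for some $C,\gamma>0$ independent of $\delta$ and of $i\in\{1,\dots,n\}$,
\[
\|\mrm{B}^{\delta}w_{i}^{\delta}-\mu_{-i}\,w_{i}^{\delta}\|_{\Xi^{\delta}}\le C\,\delta\exp(-\gamma/\delta),\qquad\forall\,\delta\in(0;\delta_{0}].
\]
By dominated convergence $(w_{i}^{\delta},w_{j}^{\delta})_{\Xi^{\delta}}\to\delta_{ij}$, so for $\delta$ small the $w_{i}^{\delta}$ are independent and span an $n$-dimensional $E^{\delta}\subset D(\mrm{B}^{\delta})$ on which the $\mL^{2}$-norm is uniformly equivalent to the norm of the coefficient vector. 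Fix $\eta:=\tfrac12|\mu_{-1}|>0$ and let $Q^{\delta}$ be the spectral projection of the self-adjoint operator $\mrm{B}^{\delta}$ onto $[\mu_{-n}-\eta\,;\,\mu_{-1}+\eta]\subset(-\infty;0)$. Since every $\mu_{-i}$ lies in this interval at distance at least $\eta$ from its complement, the spectral theorem gives $\|(\mrm{Id}-Q^{\delta})w_{i}^{\delta}\|_{\Xi^{\delta}}\le\eta^{-1}\|\mrm{B}^{\delta}w_{i}^{\delta}-\mu_{-i}w_{i}^{\delta}\|_{\Xi^{\delta}}$, hence $\|(\mrm{Id}-Q^{\delta})w\|_{\Xi^{\delta}}\le\theta_{\delta}\|w\|_{\Xi^{\delta}}$ for all $w\in E^{\delta}$ with $\theta_{\delta}\to0$; so for $\delta$ small $Q^{\delta}$ is injective on $E^{\delta}$, and therefore $\mrm{B}^{\delta}$ has at least $n$ eigenvalues, counted with multiplicity, in $[\mu_{-n}-\eta\,;\,\mu_{-1}+\eta]$. (This is just the subspace version of Lemma~\ref{EstimEigenVal}.)

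It then remains to convert this into a bound on $\delta^{2}\lambda_{-n}^{\delta}$: by the description of $\mathfrak{S}(\mrm{B}^{\delta})$ recalled above, the negative eigenvalues of $\mrm{B}^{\delta}$ listed in decreasing order are $\delta^{2}\lambda_{-1}^{\delta}\ge\delta^{2}\lambda_{-2}^{\delta}\ge\dots$, and the previous step shows that at least $n$ of them are $\ge\mu_{-n}-\eta$; since they are ordered, the first $n$ are, i.e.\ $\delta^{2}\lambda_{-n}^{\delta}\ge\mu_{-n}-\eta$, so $\delta^{2}|\lambda_{-n}^{\delta}|\le|\mu_{-n}|+\eta$ for all $\delta\in(0;\delta_{0}]$, whence $\limsup_{\delta\to0}\delta^{2}|\lambda_{-n}^{\delta}|\le|\mu_{-n}|+\eta<+\infty$. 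The only genuinely delicate point is the quasimode/multiplicity step — checking that the near-orthonormal family $(w_{i}^{\delta})$ really forces $n$ eigenvalues of $\mrm{B}^{\delta}$ into the fixed negative window; the rest is bookkeeping. Note that, just as for Proposition~\ref{thm approx eigen negative}, this argument does not use Assumption~\ref{assumption1}, consistently with the hypotheses of the corollary.
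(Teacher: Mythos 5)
Your proof is correct, and it takes a genuinely different route from the paper. The paper proves the corollary by contradiction: assuming $\delta_{k}^{2}\lambda_{-n}^{\delta_{k}}\to-\infty$ along some sequence, it notes that then at most $n-1$ of the labelled sequences $(\delta_{k}^{2}\lambda_{-m}^{\delta_{k}})_{k}$ can stay bounded, and then invokes the already-established inverse reduction (Proposition~\ref{thm approx eigen negative}) applied to $n$ \emph{distinct} eigenvalues of $\mB^{\infty}$ to produce, for $\delta$ small, $n$ pairwise disjoint exponentially thin intervals around the $\mu^{\ell_{m}}$ each containing a rescaled negative eigenvalue; this forces at least $n$ of the rescaled eigenvalues to stay bounded, contradicting the assumption. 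Your argument is direct rather than by contradiction: you build an $n$-dimensional space of quasimodes from the $n$ largest negative eigenvalues of $\mB^{\infty}$ (repeated with multiplicity, with orthonormal eigenfunctions), verify near-orthonormality by dominated convergence, and use the spectral projector of $\mrm{B}^{\delta}$ onto a fixed negative window to show it is injective on that space, giving $\ge n$ eigenvalues (with multiplicity) of $\mrm{B}^{\delta}$ in the window — i.e.\ you re-derive a multiplicity-aware version of the near-eigenvalue lemma instead of circumventing the multiplicity issue as the paper does by choosing distinct $\mu^{\ell_{m}}$. Both approaches crucially rely on the exponential localization of $\mB^{\infty}$-eigenfunctions (Proposition~\ref{proposition decompo eigenfunction Outter}) and the cut-off quasimode estimate of Lemma~\ref{lemmaEstimateInverseNeg}, and both correctly avoid Assumption~\ref{assumption1}. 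The paper's route is shorter since it reuses Proposition~\ref{thm approx eigen negative}; yours is more self-contained and in fact yields an explicit quantitative lower bound $\delta^{2}\lambda_{-n}^{\delta}\ge\mu_{-n}-\eta$ for $\delta$ small, rather than just the qualitative $\limsup$ statement.
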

\begin{proof}
Assume that there exist $n\in\N^{\ast}$ and a sequence $\delta_{k}\to 0$ such that 
$\lim_{k\to+\infty}\delta_{k}^{2}\lambda_{-n}^{\delta_{k}}= -\infty$. Since we have $\lambda_{-m}\le\lambda_{-n}$ for $m\ge n$, we conclude 
that $\lim_{k\to+\infty}\delta_{k}^{2}\lambda_{-m}^{\delta_{k}}= -\infty$ for all $m\ge n$. 
So only the sequences $(\delta_{k}^{2}\lambda_{-m}^{\delta_{k}})_{k\geq 0}$ with $m\in\{1,\dots,n-1\}$ may possibly remain bounded.\\
\newline
Now, consider $0> \mu^{\ell_1}>\dots > \mu^{\ell_n}$ $n$ distinct 
elements of $\mathfrak{S}(\mrm{B}^{\infty})\setminus\overline{\R_{+}}$. According to Proposition \ref{thm approx eigen negative}, there exist constants $C,\gamma,\delta_{0}>0$
independent of $\delta$ such that each interval $[\mu^{\ell_m}-C\,\delta^2\,\exp(-\gamma/\delta);\mu^{\ell_m}+C\,\delta^2\,\exp(-\gamma/\delta)]$, $m=1\dots n$, contains at least one element 
of the form $\delta^{2}\lambda^{\delta}_{j}$ for all $\delta\in(0;\delta_{0}]$. This imposes that we have $\limsup_{\delta\to 0}\delta^{2}\vert\lambda^{\delta}_{j}\vert<+\infty$
for at least $n$ distinct eigenvalues $\lambda_{j}^{\delta}\in\mathfrak{S}(\mrm{A}^{\delta})$. This is in contradiction with what precedes. Therefore, there holds $\limsup_{\delta\to 0}\delta^{2}\vert\lambda_{-n}^{\delta}\vert <+\infty$ for all $n\in\N^{\ast}$.
\end{proof}

\subsection{Direct reduction}\label{sectionDirectReduction}

\noindent From Proposition \ref{thm approx eigen negative}, we know that for all $n\in\N^{\ast}$, if $\mu_{-n}$ is an eigenvalue of $\mB^{\infty}$, then close to $\mu_{-n}$ there is a negative eigenvalue of $\mB^{\delta}$. In this section, we prove the converse assertion. We use the same technique as in the previous section: from eigenpairs of $\mB^{\delta}$, we build approximations of eigenpairs of the limit operator $\mB^{\infty}$. Then, we conclude thanks to the lemma on ``near eigenvalues and eigenfunctions''. This process is called the \textit{direct reduction} because we start from eigenpairs of the original problem to construct eigenpairs of the limit problem. As is often the case, the \textit{direct reduction} will be slightly more complicated than the \textit{inverse reduction}. We start by a preliminary lemma where we show a localization effect for the eigenfunctions of $\mB^{\delta}$ associated with the negative eigenvalues.

\begin{lemma}\label{LemmaestimLocalBounded}
Assume that $\kappa_{\sigma}\neq -1$ and that Assumption \ref{assumption1} holds. Let $n\in\N^{\ast}$ be a fixed number and let $w_{-n}^{\delta}$, such that $\Vert w_{-n}^{\delta}\Vert_{\Xi^{\delta}} = 1$, refer to an eigenfunction of $\mB^{\delta}$ corresponding to the negative eigenvalue $\delta^{2}\lambda_{-n}^{\delta}$. Then, we have
\begin{equation}\label{estimLocalBounded}
\limsup_{\delta\to 0}\int_{\Xi^{\delta}}(\delta^{2}\vert\lambda_{-n}^{\delta}\vert\,\vert w_{-n}^{\delta}(\bfxi)\vert^{2} +
\vert \nabla w_{-n}^{\delta}(\bfxi)\vert^{2})\exp\left(\vert\bfxi\vert\sqrt{\frac{\delta^{2}\vert\lambda_{-n}^{\delta}\vert}{\sigma_{+}}}\,\right) d\bfxi<+\infty.
\end{equation}
\end{lemma}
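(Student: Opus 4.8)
The plan is to mimic the weighted-estimate argument used in the proof of Proposition \ref{proposition decompo eigenfunction Outter}, but now applied to the rescaled operator $\mrm{B}^{\delta}$ on $\Xi^{\delta}$, paying attention to the fact that $w_{-n}^{\delta}$ vanishes on $\partial\Xi^{\delta}$ and that the exponent in the weight is $\gamma_{\delta}:=\sqrt{\delta^2|\lambda_{-n}^{\delta}|/\sigma_+}$, which by Corollary \ref{BoundednessEigenvalues} stays bounded as $\delta\to0$. First I would introduce, for $T\ge 2$, the truncated weight $\mathscr{W}^{T}_{\gamma_{\delta}}$ defined exactly as in the proof of Proposition \ref{proposition decompo eigenfunction Outter} (equal to $\exp(\gamma_{\delta})$ for $|\bfxi|\le1$, $\exp(\gamma_{\delta}|\bfxi|)$ for $1\le|\bfxi|\le T$, and $\exp(\gamma_{\delta}T)$ for $|\bfxi|\ge T$); it is bounded and Lipschitz, satisfies $|\nabla\mathscr{W}^{T}_{\gamma_{\delta}}|\le\gamma_{\delta}\mathscr{W}^{T}_{\gamma_{\delta}}$, and — crucially — $(\mathscr{W}^{T}_{\gamma_{\delta}})^2 w_{-n}^{\delta}$ still lies in $\mH^1_0(\Xi^{\delta})$ because the weight is bounded and does not spoil the boundary condition.

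The core computation is then a line-by-line repeat of \eqref{estimExpoDecay1}–\eqref{estimExpoDecay3} with $\R^3$ replaced by $\Xi^{\delta}$ and $\mu$ replaced by $\delta^2\lambda_{-n}^{\delta}$: testing the identity $(\sigma^{\infty}\nabla w_{-n}^{\delta},\nabla w')_{\Xi^{\delta}}=\delta^2\lambda_{-n}^{\delta}(w_{-n}^{\delta},w')_{\Xi^{\delta}}$ against $w'=(\mathscr{W}^{T}_{\gamma_{\delta}})^2 w_{-n}^{\delta}$, expanding the gradient, using that $\mathscr{W}^{T}_{\gamma_{\delta}}$ is constant on $\Xi_{-}$ (where $\delta^{-1}\Xi_-^{\delta}=\Xi_-\subset\overline{\mrm{B}(O,1)}$), adding $\sigma_+\|\nabla(\mathscr{W}^{T}_{\gamma_{\delta}}w_{-n}^{\delta})\|^2_{\Xi_-}$ to both sides, and invoking the triangle inequality $\|\nabla(\mathscr{W}^{T}_{\gamma_{\delta}}w)\|^2\ge\frac12\|\mathscr{W}^{T}_{\gamma_{\delta}}\nabla w\|^2-\|w\nabla\mathscr{W}^{T}_{\gamma_{\delta}}\|^2$ together with $|\nabla\mathscr{W}^{T}_{\gamma_{\delta}}|\le\gamma_{\delta}\mathscr{W}^{T}_{\gamma_{\delta}}$. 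With the choice $\gamma_{\delta}=\tfrac12\sqrt{\delta^2|\lambda_{-n}^{\delta}|/\sigma_+}$ the zeroth-order term keeps a favourable sign, and one lands at
\[
\delta^2|\lambda_{-n}^{\delta}|\,\|\mathscr{W}^{T}_{\gamma_{\delta}}w_{-n}^{\delta}\|^2_{\Xi^{\delta}}+\sigma_+\|\mathscr{W}^{T}_{\gamma_{\delta}}\nabla w_{-n}^{\delta}\|^2_{\Xi^{\delta}}\le 2\exp\!\big(\sqrt{\delta^2|\lambda_{-n}^{\delta}|/\sigma_+}\big)(\sigma_++|\sigma_-|)\,\|\nabla w_{-n}^{\delta}\|^2_{\Xi_-}.
\]
Letting $T\to+\infty$ (the right-hand side being $T$-independent) and using monotone convergence gives the analogue of \eqref{estimEigenNF} with an explicit constant on the right.

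The one genuinely new ingredient, and the step I expect to be the main obstacle, is controlling the right-hand side $\|\nabla w_{-n}^{\delta}\|_{\Xi_-}$ uniformly in $\delta$: in Proposition \ref{proposition decompo eigenfunction Outter} the eigenfunction was fixed, whereas here $w_{-n}^{\delta}$ varies and is only normalized in $\mL^2(\Xi^{\delta})$. To handle this I would use the (rescaled) variational identity $\sigma_+\|\nabla w_{-n}^{\delta}\|^2_{\Xi_+}-|\sigma_-|\|\nabla w_{-n}^{\delta}\|^2_{\Xi_-}=\delta^2\lambda_{-n}^{\delta}\|w_{-n}^{\delta}\|^2_{\Xi^{\delta}}=\delta^2\lambda_{-n}^{\delta}$ together with the $T$-shifted $\mH^2$-type a priori estimate near $\Xi_-$ coming from Proposition \ref{PropDomainBinf} / Proposition \ref{proposition stability estimate} (i.e. $\|w_{-n}^{\delta}\|_{\mH^2(\Xi_-)}+\|w_{-n}^{\delta}\|_{\mH^2(\Xi_+\cap\mrm{B}(O,2))}\le C(\|\div(\sigma^{\infty}\nabla w_{-n}^{\delta})\|+\|w_{-n}^{\delta}\|)$ on a fixed ball, since $\Xi^{\delta}\supset\mrm{B}(O,2)$ for $\delta$ small), which bounds $\|\nabla w_{-n}^{\delta}\|_{\Xi_-}$ by $C(\delta^2|\lambda_{-n}^{\delta}|+1)$; combined with $\limsup_{\delta\to0}\delta^2|\lambda_{-n}^{\delta}|<+\infty$ from Corollary \ref{BoundednessEigenvalues}, this yields a bound independent of $\delta$, and hence \eqref{estimLocalBounded}. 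An alternative, slightly cleaner route to the same end is to absorb $\|\nabla w_{-n}^{\delta}\|_{\Xi_-}$ directly: choose $\gamma_{\delta}$ a little smaller so that the term $\sigma_+\|\mathscr{W}^{T}_{\gamma_{\delta}}\nabla w_{-n}^{\delta}\|^2_{\Xi_-}$ on the left (which dominates $\sigma_+\exp(2\gamma_{\delta})\|\nabla w_{-n}^{\delta}\|^2_{\Xi_-}$) can swallow a fixed fraction of the right-hand side after the algebraic rearrangement, leaving only $\delta^2|\lambda_{-n}^{\delta}|\|w_{-n}^{\delta}\|^2_{\Xi^{\delta}}=\delta^2|\lambda_{-n}^{\delta}|$ there, which is bounded by Corollary \ref{BoundednessEigenvalues}; I would present whichever of the two is shorter once the constants are written out.
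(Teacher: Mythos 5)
Your main route is essentially the paper's proof: repeat the weighted-multiplier computation of Proposition \ref{proposition decompo eigenfunction Outter} on the bounded domain $\Xi^{\delta}$ with $\gamma_{\delta}=\tfrac12\sqrt{\delta^2|\lambda_{-n}^{\delta}|/\sigma_+}$, invoke Corollary \ref{BoundednessEigenvalues} to bound $\gamma_{\delta}$ uniformly, and then dominate the residual term $\|\nabla w_{-n}^{\delta}\|_{\Xi_-}$ by the local $\mH^2$ a priori estimate (\ref{estim1domain}) applied on a fixed ball (using $\div(\sigma^{\infty}\nabla w_{-n}^{\delta})=\delta^2\lambda_{-n}^{\delta}w_{-n}^{\delta}$ and $\|w_{-n}^{\delta}\|_{\Xi^{\delta}}=1$). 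That is exactly what the paper does; the $T$-truncation you carry along is harmless but unnecessary here since $\Xi^{\delta}$ is already bounded.

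Your alternative route (b) does not close, so drop it: after adding $\sigma_+\|\nabla(\mathscr{W}w)\|^2_{\Xi_-}$ to both sides of the analogue of (\ref{estimExpoDecay2}) and applying the triangle inequality, the $\|\nabla w\|^2_{\Xi_-}$ term appears on both sides multiplied by the \emph{same} factor $\exp(2\gamma_{\delta})$, with coefficient $\sigma_+/2$ on the left and $\sigma_++|\sigma_-|$ on the right, so no choice of $\gamma_{\delta}$ lets the left absorb the right. The energy identity $\sigma_+\|\nabla w_{-n}^{\delta}\|^2_{\Xi_+}-|\sigma_-|\|\nabla w_{-n}^{\delta}\|^2_{\Xi_-}=\delta^2\lambda_{-n}^{\delta}$ is likewise of no help, since with $\lambda_{-n}^{\delta}<0$ it only yields a \emph{lower} bound on $\|\nabla w_{-n}^{\delta}\|_{\Xi_-}$. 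The elliptic regularity estimate in route (a) is the indispensable ingredient.
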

\begin{proof}
To simplify the notation, let us drop the index ``$_{-n}$''. We adopt here the same approach as in Proposition \ref{proposition decompo eigenfunction Outter}. Let us introduce the weight function $\mathscr{W}^{\delta}$ such that $\mathscr{W}^{\delta}(\bfxi) := \exp(\gamma^{\delta})$ for $\vert \bfxi\vert\leq 1$ and 
$\mathscr{W}^{\delta}(\bfxi) := \exp(\gamma^{\delta}\vert \bfxi\vert)$ for $\vert \bfxi\vert\geq 1$, with 
$\gamma^{\delta} = (\vert \delta^2\lambda^{\delta}\vert/\sigma_{+})^{1/2}/2$. Since 
$-\div(\sigma^{\infty}\nabla w^{\delta}) = \delta^{2}\lambda^{\delta}w^{\delta}$ in $\Xi^{\delta}$, a computation
nearly identical to that of the proof of Proposition \ref{proposition decompo eigenfunction Outter} 
shows that 
\begin{equation}\label{estimLocalBounded2}
\delta^{2}\vert \lambda^{\delta}\vert\|\mathscr{W}^{\delta}w^{\delta}\|_{\Xi^{\delta}}^{2}+
\sigma_{+}\|\mathscr{W}^{\delta}\nabla w^{\delta}\|_{\Xi^{\delta}}^{2} \le 
2\exp(2\gamma^{\delta})\,(\sigma_{+}+\vert\sigma_{-}\vert)\,\|\nabla 
w^{\delta}\|^2_{\Xi_{-}}.
\end{equation}
From Corollary \ref{BoundednessEigenvalues}, we know that the map $\delta\mapsto \delta^{2}\vert \lambda^{\delta}\vert$ (and so $\delta\mapsto \gamma^{\delta}$) is bounded as $\delta\to 0$. In addition, applying (\ref{estim1domain}) yields a 
constant  $C>0$ independent of $\delta,\lambda^{\delta}$, such that 
$\Vert \nabla w^{\delta}\Vert_{\Xi_{-}}\leq C(1+\delta^{2}\vert \lambda^{\delta}\vert)\Vert w^{\delta}\Vert_{\Xi_{-}}$. Since $\Vert w^{\delta}\Vert_{\Xi_{-}} \leq \Vert w^{\delta}\Vert_{\Xi^{\delta}} =  1$, we deduce that the right-hand side of the estimate (\ref{estimLocalBounded2}) remains bounded uniformly as $\delta\to 0$. This ends the proof.
\end{proof}

\noindent Note that if $w_{-n}^{\delta}$ is an eigenfunction of $\mrB^{\delta}$ with $\Vert w_{-n}^{\delta}\Vert_{\Xi^{\delta}} = 1$, then the function $v^{\delta}$ such that $v^{\delta}(\bfx) = \delta^{3/2}w_{-n}^{\delta}(\bfx/\delta)$ is an eigenfunction of $\mA^{\delta}$ with
$\Vert v^{\delta}\Vert_{\Xi^{\delta}} = 1$. Therefore, the previous lemma shows that eigenfunctions of $\mA^{\delta}$ associated with negative eigenvalues get more and more localized around the small negative inclusion as $\delta\to 0$. Now, we can construct quasi-eigenfunctions for $\mB^{\infty}$ from eigenfunctions of the operator $\mB^{\delta}$.

\begin{lemma}\label{lemma estimations inverse negative}
Assume that $\kappa_{\sigma}\neq -1$ and that Assumption \ref{assumption1} holds. Let $n\in\N^{\ast}$ be a fixed number and let $\delta^{2}\lambda_{-n}^{\delta}<0$ refer to the corresponding eigenvalue of the operator $\mB^{\delta}$. Then there exist constants $C,\gamma,\delta_{0}>0$, depending on $n$ but independent of $\delta$, such that 
\begin{equation}\label{estimates lemma direct negative}
\inf_{w\in D(\mrm{B}^{\infty})\setminus\{0\}}
\frac{\|\mB^{\infty} w - \delta^{2}\lambda_{-n}^{\delta}w\|_{\R^3}}{\Vert w\Vert_{\R^{3}}} \le C\,\delta\exp(-\gamma/\delta),\qquad\forall \delta\in(0;\delta_{0}].
\end{equation}
\end{lemma}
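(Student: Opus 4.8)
\noindent The plan is to carry out the \textit{direct reduction} by mirroring the \textit{inverse reduction} of Lemma~\ref{lemmaEstimateInverseNeg}, i.e.\ to manufacture a quasi--eigenfunction of $\mB^{\infty}$ out of an eigenfunction of $\mB^{\delta}$ by truncation, the truncation error being controlled through the exponential localisation established in Lemma~\ref{LemmaestimLocalBounded}. Drop the subscript $_{-n}$, write $\mu^{\delta}:=\delta^{2}\lambda_{-n}^{\delta}<0$, and let $w^{\delta}:=w_{-n}^{\delta}\in D(\mB^{\delta})$ be a corresponding eigenfunction with $\|w^{\delta}\|_{\Xi^{\delta}}=1$, so that $-\div(\sigma^{\infty}\nabla w^{\delta})=\mu^{\delta}w^{\delta}$ in $\Xi^{\delta}$ and $w^{\delta}=0$ on $\partial\Xi^{\delta}$. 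Since $\overline{\mrB(O,2)}\subset\Om$, we have $\mrB(O,2/\delta)\subset\Xi^{\delta}$; hence, extending by zero outside $\Xi^{\delta}$, the function $W^{\delta}:=\psi^{\delta}w^{\delta}$ (with $\psi^{\delta}$ as in the proof of Lemma~\ref{lemmaEstimateInverseNeg}) belongs to $\mH^{1}(\R^{3})$, and since $\psi^{\delta}\equiv 1$ near $\Xi_{-}$ one has $W^{\delta}=w^{\delta}$ there, so $\lbr\sigma\partial_{\bfn}W^{\delta}\rbr_{\Gamma}=0$ and $W^{\delta}\in D(\mB^{\infty})$. I would use $W^{\delta}$ as a competitor in the infimum in (\ref{estimates lemma direct negative}).

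For the residual, a direct computation gives $\mB^{\infty}W^{\delta}-\mu^{\delta}W^{\delta}=-\big[\div(\sigma^{\infty}\nabla\cdot),\psi^{\delta}\big]w^{\delta}=-2\sigma_{+}\nabla\psi^{\delta}\cdot\nabla w^{\delta}-\sigma_{+}w^{\delta}\Delta\psi^{\delta}$, which is supported in the annulus $\mathbb{Q}^{1/\delta}$ where $\sigma^{\infty}=\sigma_{+}$, $|\nabla\psi^{\delta}|\le C\delta$ and $|\Delta\psi^{\delta}|\le C\delta^{2}$; hence $\|\mB^{\infty}W^{\delta}-\mu^{\delta}W^{\delta}\|_{\R^{3}}^{2}\le C(\delta^{2}\|\nabla w^{\delta}\|^{2}_{\mathbb{Q}^{1/\delta}}+\delta^{4}\|w^{\delta}\|^{2}_{\mathbb{Q}^{1/\delta}})$. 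On $\mathbb{Q}^{1/\delta}$ one has $|\bfxi|\ge1/\delta$, so $\exp(|\bfxi|\sqrt{|\mu^{\delta}|/\sigma_{+}})\ge\exp(\delta^{-1}\sqrt{|\mu^{\delta}|/\sigma_{+}})$; inserting this into Lemma~\ref{LemmaestimLocalBounded}, and using $|\lambda_{-n}^{\delta}|\ge1$ for $\delta$ small (Lemma~\ref{EigenValuesBlowUp}) together with the upper bound on $|\mu^{\delta}|$ from Corollary~\ref{BoundednessEigenvalues}, yields $\delta^{2}\|\nabla w^{\delta}\|^{2}_{\mathbb{Q}^{1/\delta}}+\delta^{4}\|w^{\delta}\|^{2}_{\mathbb{Q}^{1/\delta}}\le C\delta^{2}\exp(-\delta^{-1}\sqrt{|\mu^{\delta}|/\sigma_{+}})$, so that $\|\mB^{\infty}W^{\delta}-\mu^{\delta}W^{\delta}\|_{\R^{3}}\le C\delta\exp(-\tfrac12\delta^{-1}\sqrt{|\mu^{\delta}|/\sigma_{+}})$. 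For the normalisation, $1-\psi^{\delta}$ is supported in $\{|\bfxi|\ge1/\delta\}$, so the same use of Lemma~\ref{LemmaestimLocalBounded} gives $\|(1-\psi^{\delta})w^{\delta}\|^{2}_{\Xi^{\delta}}\le C|\mu^{\delta}|^{-1}\exp(-\delta^{-1}\sqrt{|\mu^{\delta}|/\sigma_{+}})$, which tends to $0$ as $\delta\to0$ since $|\lambda_{-n}^{\delta}|\to+\infty$ while $|\mu^{\delta}|^{-1}$ grows only algebraically in $1/\delta$ (again by Lemma~\ref{EigenValuesBlowUp}); hence $\|W^{\delta}\|_{\R^{3}}\ge\|w^{\delta}\|_{\Xi^{\delta}}-\|(1-\psi^{\delta})w^{\delta}\|_{\Xi^{\delta}}\ge\tfrac12$ for $\delta$ small. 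Combining these two facts, the ratio in (\ref{estimates lemma direct negative}) is bounded by $2C\delta\exp(-\tfrac12\delta^{-1}\sqrt{|\mu^{\delta}|/\sigma_{+}})$.

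It remains to make the exponent independent of $\delta$: since $\delta^{-1}\sqrt{|\mu^{\delta}|/\sigma_{+}}=\sqrt{|\lambda_{-n}^{\delta}|/\sigma_{+}}$, we need a uniform lower bound $|\mu^{\delta}|=\delta^{2}|\lambda_{-n}^{\delta}|\ge c_{n}>0$ for $\delta$ small, after which (\ref{estimates lemma direct negative}) holds with $\gamma=\tfrac12\sqrt{c_{n}/\sigma_{+}}$. I expect this two--sided control of $\delta^{2}|\lambda_{-n}^{\delta}|$ to be the main obstacle: the upper bound is Corollary~\ref{BoundednessEigenvalues}, but one must also exclude negative eigenvalues of $\mB^{\delta}$ drifting to $0$. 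The natural route is a compactness argument on the rescaled eigenfunctions $w_{-1}^{\delta},\dots,w_{-n}^{\delta}$: by the regularity estimate (\ref{estim1domain}) and Corollary~\ref{BoundednessEigenvalues} they are bounded in $\mH^{2}(\Xi_{\pm})$ and, up to extraction, converge locally; were $\mu^{\delta}\to0$, the limit would be harmonic in $\Xi_{\pm}$ with $\lbr\sigma\partial_{\bfn}\cdot\rbr_{\Gamma}=0$ and finite Dirichlet energy over $\R^{3}$, hence (after subtracting a constant) an element of $\ker\mathcal{B}_{\beta}$, which is trivial by Assumption~\ref{assumption1}, and tracking the $\mL^{2}$--mass on expanding balls then contradicts $\|w_{-n}^{\delta}\|_{\Xi^{\delta}}=1$.
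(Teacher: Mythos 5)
Your construction of the quasi--eigenfunction $W^{\delta}=\psi^{\delta}w^{\delta}$ and the residual computation $\mB^{\infty}W^{\delta}-\mu^{\delta}W^{\delta}=-[\div(\sigma^{\infty}\nabla\cdot),\psi^{\delta}]w^{\delta}$ supported in $\mathbb{Q}^{1/\delta}$ match the paper exactly, and you correctly identify that the whole lemma hinges on a uniform lower bound $\delta^{2}|\lambda_{-n}^{\delta}|\ge c_{n}>0$: without it, the exponent $\delta^{-1}\sqrt{|\mu^{\delta}|/\sigma_{+}}$ in Lemma~\ref{LemmaestimLocalBounded} is not $\gtrsim 1/\delta$. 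The gap is in how you propose to get this lower bound. Your compactness argument is circular at the decisive step: you invoke ``tracking the $\mathrm{L}^{2}$--mass on expanding balls'' to ensure the weak limit is nontrivial, but if $\mu^{\delta}\to 0$ then the decay rate in Lemma~\ref{LemmaestimLocalBounded} degenerates, so there is no a priori mechanism preventing the unit $\mathrm{L}^{2}$--mass from spreading out and the local limit from being zero (or a constant, which is also killed by $\ker\mathcal{B}_{\beta}=\{0\}$). Ruling out that escape is exactly the hard part, and it is not supplied.

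The paper closes this by a bootstrap rather than compactness, and it is worth noting the mechanism because it is cheaper than what you attempt. First, using only that $\|w^{\delta}\|_{\Xi^{\delta}}=1$ and $\limsup_{\delta\to0}\|\nabla w^{\delta}\|_{\Xi^{\delta}}<+\infty$ (a consequence of Lemma~\ref{LemmaestimLocalBounded} and Corollary~\ref{BoundednessEigenvalues}, with no exponential gain needed), one gets the crude bound $\|\mB^{\infty}w-\delta^{2}\lambda_{-n}^{\delta}w\|_{\R^{3}}=O(\delta)$. Applying Lemma~\ref{EstimEigenVal} gives
\begin{equation*}
\inf_{\mu\in\mathfrak{S}(\mB^{\infty})}\bigl|\mu-\delta^{2}\lambda_{-n}^{\delta}\bigr|=O(\delta).
\end{equation*}
Now combine this with Lemma~\ref{EigenValuesBlowUp}, which gives $\delta^{2}|\lambda_{-n}^{\delta}|\ge C_{\eps}\delta^{1/2+\eps}\gg\delta$. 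If $\delta^{2}\lambda_{-n}^{\delta}$ lay in $(\mu_{-1}/2,0)$, the nearest point of $\mathfrak{S}(\mB^{\infty})=\{\mu_{-k}\}_{k\ge1}\cup[0,\infty)$ would be $0$, giving $\mathrm{dist}(\delta^{2}\lambda_{-n}^{\delta},\mathfrak{S}(\mB^{\infty}))=\delta^{2}|\lambda_{-n}^{\delta}|\ge C_{\eps}\delta^{1/2+\eps}$, contradicting the $O(\delta)$ estimate for small $\delta$. Hence $\delta^{2}|\lambda_{-n}^{\delta}|\ge|\mu_{-1}|/2$ for $\delta\le\delta_{0}$, which is the uniform lower bound (\ref{EstimDistSpec3}). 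Only then is the exponential localization re--inserted into the residual estimate to upgrade $O(\delta)$ to $O(\delta\exp(-\gamma/\delta))$ with $\gamma=\sqrt{|\mu_{-1}|/(2\sigma_{+})}$. So: your quasi--eigenfunction and residual estimate are fine, but you should replace the sketched compactness argument by this two--pass use of the near--eigenvalue lemma; as written, the proposal has a genuine hole at the point you yourself flagged.
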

\begin{proof}
Once again, we omit the index ``$_{-n}$''. Let us consider an eigenfunction $w^{\delta}$ of $\mrB^{\delta}$ associated with the eigenvalue $\delta^{2}\lambda^{\delta}$. We assume that there holds $\Vert w^{\delta}\Vert_{\Xi^{\delta}} = 1$. Set $w := \psi^{\delta}\,w^{\delta}$ where $\psi^{\delta}$ is such that $\psi^{\delta}(\bfxi):=\psi(\delta\bfxi)$ ($\psi$ is the cut-off function defined in (\ref{definition cut-off functions})). Let us show first that  
\begin{equation}\label{LimitNorm}
\lim_{\delta\to 0}\Vert w\Vert_{\R^{3}} = 1.
\end{equation}
Introducing $\chi^{\delta} = 1-\psi^{\delta}$, we can write $\|w\|_{\R^3} = \|\psi^{\delta}w^{\delta}\|_{\R^3} \ge \| w^{\delta}\|_{\Xi^{\delta}} - \| \chi^{\delta} w^{\delta}\|_{\Xi^{\delta}}$. According to Lemma \ref{EigenValuesBlowUp}, there exists a constant 
$C>0$ independent of $\delta$ such that $\vert \lambda^{\delta}\vert\geq C\,\delta^{-1}$ 
so that $\delta^{2}\vert \lambda^{\delta}\vert\geq C\,\delta$ for $\delta$ small enough.
In addition, observe that $\vert\bfxi\vert\geq 1/\delta$ for $\bfxi\in\mrm{supp}(\chi^{\delta})$. 
As a consequence, applying Lemma \ref{LemmaestimLocalBounded} above, we can write
$$
\Vert \chi^{\delta} w^{\delta}\Vert_{\Xi^{\delta}}^{2}\leq 
\frac{\exp\Big(-\sqrt{\vert\lambda^{\delta}\vert/\sigma_{+}}\, \Big)}{\delta^{2}
\vert\lambda^{\delta}\vert}\;\int_{\Xi^{\delta}}\delta^{2}\vert\lambda^{\delta}\vert\,\vert w^{\delta}(\bfxi)\vert^{2}
\exp\Big(\vert\bfxi\vert\sqrt{\frac{\delta^{2}\vert\lambda^{\delta}\vert}{\sigma_{+}}} \Big)\,d\bfxi\leq 
C_{1}\delta^{-1}\exp(-C_{2}\delta^{-1/2}).
$$
This implies $\lim_{\delta\to 0}\Vert \chi^{\delta} w^{\delta}\Vert_{\Xi^{\delta}} = 0$ and establishes (\ref{LimitNorm}). Now, we estimate $\|\mB^{\infty} w-\delta^{2}\lambda w\|_{\R^3}$. We compute
$\div(\sigma^{\infty}\nabla w) =\psi^{\delta}\,\div(\sigma^{\infty}\nabla w^{\delta})+ 
2\sigma_+\nabla \psi^{\delta}\cdot\nabla w^{\delta}+\sigma_+w^{\delta}\Delta\psi^{\delta}$.
Observing that $|\nabla\psi^{\delta}|\le C\,\delta$, $|\Delta\psi^{\delta}|\le C\,\delta^{2}$ are non null 
only on $\mathbb{Q}^{1/\delta}=\{\bfxi\in\R^3\,|\,\delta^{-1}<|\bfxi|<2\delta^{-2}\}$ and recalling that 
$\div(\sigma^{\infty}\nabla w^{\delta}) = \delta^{2}\lambda^{\delta}w^{\delta}$, we conclude that there exists a 
constant $C>0$ independent of $\delta$ such that 
\begin{equation}\label{EstimDistSpec2}
\begin{array}{lcl}
\|\mB^{\infty} w-\delta^{2}\lambda^{\delta}w\|_{\R^3} & = & 
\|2\sigma_+\nabla \psi^{\delta}\cdot\nabla w^{\delta}+\sigma_+w^{\delta}\Delta\psi^{\delta}\|_{\Xi^{\delta}}\\[0.4em]
& \le & C\,(\delta\|\nabla w^{\delta}\|_{\mathbb{Q}^{1/\delta}}+\delta^{2}\|w^{\delta}\|_{\mathbb{Q}^{1/\delta}})
\end{array}
\end{equation}
Using Lemma \ref{LemmaestimLocalBounded} and working as in the beginning of this proof, we can show that $\limsup_{\delta\to 0}\Vert w^{\delta}\Vert_{\Xi^{\delta}} + 
\Vert \nabla w^{\delta}\Vert_{\Xi^{\delta}}<+\infty$. This remark, combined with 
the above inequality shows that $\|\mB^{\infty} w-\delta^{2}\lambda^{\delta}w\|_{\R^3} 
= O(\delta)$. Since $\mrm{B}^{\infty}$ is self-adjoint, we can apply the lemma on ``near eigenvalues and eigenfunctions'' (see Lemma \ref{EstimEigenVal} in appendix) to obtain
\begin{equation}\label{EstimDistSpec}
\inf_{\mu\in \mathfrak{S}(\mrm{B}^{\infty})}\vert\mu - \delta^{2}\lambda^{\delta}\vert = O(\delta)
\end{equation}
We deduce in particular that there exists $\delta_{0}>0$ independent of $\delta$ such that
\begin{equation}\label{EstimDistSpec3}
\delta^{2}\vert \lambda^{\delta}\vert\geq \vert \mu_{-1}\vert/2,\qquad\forall \delta\in(0;\delta_{0}].
\end{equation}
Now, let us come back to (\ref{EstimDistSpec2}). Since $\delta^{-1}\leq \vert\bfxi\vert\leq 2\delta^{-1}$ on $\mathbb{Q}^{1/\delta}$,
making use of (\ref{EstimDistSpec3}) combined with Lemma \ref{LemmaestimLocalBounded}, we deduce that there exists 
a constant $C>0$ independent of $\delta$ such that 
$\|\mB^{\infty} w^{\delta}-\delta^{2}\lambda_{-n}^{\delta}w^{\delta}\|_{\R^3}\leq C\delta\exp(-\delta^{-1}\sqrt{\vert\mu_{-1}\vert/(2\sigma_{+})})$. 
This, together with (\ref{LimitNorm}) leads to the conclusion of the proof.
\end{proof}

\noindent 
Apply one last time Lemma \ref{EstimEigenVal} in appendix to obtain the 
\begin{proposition}\label{propoSecondeEstimNeg}
Assume that $\kappa_{\sigma}\neq -1$ and that Assumption \ref{assumption1} holds. Let $n\in\N^{\ast}$ be a fixed number and let  $\lambda_{-n}^{\delta}<0$ refer to the corresponding eigenvalue of the operator $\mA^{\delta}$. Then there exist constants $C,\gamma,\delta_{0}>0$, depending on $n$ but independent of $\delta$, such that 
$$
\inf_{\mu\in \mathfrak{S}(\mrm{B}^{\infty})}\vert \lambda_{-n}^{\delta} - \delta^{-2}\mu\vert\leq C\,\exp(-\gamma/\delta)
,\qquad\forall \delta\in(0;\delta_{0}].
$$
\end{proposition}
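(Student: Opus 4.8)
The plan is to feed the quasi-mode estimate of Lemma~\ref{lemma estimations inverse negative} into the abstract ``near eigenvalues and eigenfunctions'' principle (Lemma~\ref{EstimEigenVal} in the appendix), exactly as in the derivation of Proposition~\ref{thm approx eigen negative} but with the roles of $\mB^{\delta}$ and $\mB^{\infty}$ now interchanged. First I would recall that $\mB^{\infty}$ is self-adjoint, hence normal, so that Lemma~\ref{EstimEigenVal} is available for it: for every real $\lambda$,
\[
\inf_{\mu\in\mathfrak{S}(\mB^{\infty})}|\lambda-\mu|\;\le\;\inf_{w\in D(\mB^{\infty})\setminus\{0\}}\frac{\|\mB^{\infty}w-\lambda w\|_{\R^3}}{\|w\|_{\R^3}}.
\]

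Then I would apply this with $\lambda=\delta^{2}\lambda_{-n}^{\delta}$. Lemma~\ref{lemma estimations inverse negative} provides constants $C,\gamma,\delta_{0}>0$, depending on $n$ but not on $\delta$, such that the right-hand side above is bounded by $C\,\delta\exp(-\gamma/\delta)$ for all $\delta\in(0;\delta_{0}]$; hence $\inf_{\mu\in\mathfrak{S}(\mB^{\infty})}|\delta^{2}\lambda_{-n}^{\delta}-\mu|\le C\,\delta\exp(-\gamma/\delta)$. Dividing by $\delta^{2}$ gives $\inf_{\mu\in\mathfrak{S}(\mB^{\infty})}|\lambda_{-n}^{\delta}-\delta^{-2}\mu|\le C\,\delta^{-1}\exp(-\gamma/\delta)$, and it only remains to absorb the algebraic prefactor into the exponential: for $\delta$ small enough $\delta^{-1}\le\exp(\gamma/(2\delta))$, so that $\delta^{-1}\exp(-\gamma/\delta)\le\exp(-\gamma/(2\delta))$. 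Renaming $\gamma/2$ as $\gamma$ (and shrinking $\delta_{0}$ if needed) yields precisely the announced bound.

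I do not expect a genuine obstacle in this last step: all the analytic substance --- the exponential localization of the eigenfunctions of $\mB^{\delta}$ associated with negative eigenvalues (Lemma~\ref{LemmaestimLocalBounded}), together with the construction and estimation of the truncated quasi-mode $\psi^{\delta}w_{-n}^{\delta}$ for $\mB^{\infty}$ --- is already packaged inside Lemma~\ref{lemma estimations inverse negative}. The two points requiring (mild) attention are that $\mB^{\infty}$ must be normal for the near-eigenvalue lemma to be usable in this direction, which is guaranteed by its self-adjointness established in \S\ref{ParNearFieldOperator}, and the harmless bookkeeping of converting $\delta^{-1}\exp(-\gamma/\delta)$ back into a pure decaying exponential.
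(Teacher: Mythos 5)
Your proposal is correct and follows the paper's argument exactly: the paper likewise obtains Proposition~\ref{propoSecondeEstimNeg} by feeding the quasi-mode bound of Lemma~\ref{lemma estimations inverse negative} into Lemma~\ref{EstimEigenVal} applied to the self-adjoint operator $\mB^{\infty}$, then rescaling by $\delta^{-2}$. The absorption of the algebraic factor $\delta^{-1}$ into the exponential is indeed routine and requires no further justification.
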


\subsection{Conclusion}
\noindent In the next theorem, we state the main result of the paper concerning the negative spectrum of the original operator $\mA^{\delta}$.
\begin{theorem}\label{NegSpectConv}
Assume that $\kappa_{\sigma}\neq -1$ and that Assumption \ref{assumption1} holds. Let $n\in\N^{\ast}$ be a fixed number and let  $\lambda_{-n}^{\delta}<0$ (resp. $\mu_{-n}<0$) refer to the corresponding eigenvalue of the operator $\mA^{\delta}$ (resp. $\mB^{\infty}$). Then there exist constants 
$C,\gamma,\delta_{0}>0$, depending on $n$ but independent of $\delta$, such that 
$$
\vert \lambda_{-n}^{\delta} - \delta^{-2}\mu_{-n}\vert\leq C\exp(-\gamma/\delta)
,\qquad\forall \delta\in(0;\delta_{0}].
$$
\end{theorem}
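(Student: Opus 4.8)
The plan is to pass to the fast variable $\bfxi=\bfx/\delta$ and prove the equivalent statement $|\delta^{2}\lambda_{-n}^{\delta}-\mu_{-n}|\le C\exp(-\gamma/\delta)$, which, since $\delta^{2}\le 1$ and $\mathfrak{S}(\mB^{\delta})=\{\delta^{2}\lambda\,|\,\lambda\in\mathfrak{S}(\mA^{\delta})\}$, yields the theorem. The two one-sided comparisons are already in hand: Proposition \ref{thm approx eigen negative} places an element of $\mathfrak{S}(\mB^{\delta})$ within $C\exp(-\gamma/\delta)$ of every negative eigenvalue of $\mB^{\infty}$, and Proposition \ref{propoSecondeEstimNeg} places an element of $\mathfrak{S}(\mB^{\infty})$ within $C\exp(-\gamma/\delta)$ of every negative eigenvalue of $\mB^{\delta}$. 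What is missing is the bookkeeping of multiplicities. Write $\mu^{(1)}>\mu^{(2)}>\cdots$ for the \emph{distinct} negative eigenvalues of $\mB^{\infty}$, let $m_{j}:=\dim\ker(\mB^{\infty}-\mu^{(j)})<+\infty$ be their multiplicities, and set $I_{j}^{\delta}:=[\mu^{(j)}-C\exp(-\gamma/\delta);\mu^{(j)}+C\exp(-\gamma/\delta)]$. The goal is to show that for $\delta$ small, and for any fixed $K$, $\mB^{\delta}$ has exactly $m_{j}$ eigenvalues (counted with multiplicity) in each $I_{j}^{\delta}$, $j\le K$, and no other negative eigenvalue in $[\mu^{(K)}-c;0)$ for a suitable small fixed $c>0$.

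As a preliminary step I would record a localization property of $\mathfrak{S}(\mB^{\delta})\setminus\overline{\R_{+}}$, essentially already contained in (\ref{EstimDistSpec3}). Fix $K$ and $c>0$ smaller than $|\mu^{(K)}|$ and than the gaps $\mu^{(j)}-\mu^{(j+1)}$, $j\le K$. Corollary \ref{BoundednessEigenvalues} keeps the negative eigenvalues of $\mB^{\delta}$ bounded, Lemma \ref{EigenValuesBlowUp} keeps them below $-C_{\eps}\delta^{1/2+\eps}$, hence bounded away from $0$ and from the continuous spectrum $[0;+\infty)$ of $\mB^{\infty}$, and Proposition \ref{propoSecondeEstimNeg} then forces every negative eigenvalue of $\mB^{\delta}$ lying in $[\mu^{(K)}-c;0)$ to be $C\exp(-\gamma/\delta)$-close to one of $\mu^{(1)},\dots,\mu^{(K)}$, i.e.\ to lie in $\bigcup_{j\le K}I_{j}^{\delta}$. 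In particular $\mathfrak{S}(\mB^{\delta})\cap\big([\mu^{(K)}-c;0)\setminus\bigcup_{j\le K}I_{j}^{\delta}\big)=\emptyset$.

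The core of the proof is to upgrade the inverse and direct reductions of \S\ref{sectionInverseReduction}--\S\ref{sectionDirectReduction} into multiplicity statements, using the classical multiplicity-counting version of the lemma on near eigenvalues and eigenfunctions (see \cite{ViLy62,BiSo87}): a family of $m$ quasimodes with error $\eps$ near $\mu$ whose Gram matrix is close to the identity forces at least $m$ eigenvalues, counted with multiplicity, in an $O(m\eps)$-neighbourhood of $\mu$. For the lower bound I would take an orthonormal basis $w_{j,1},\dots,w_{j,m_{j}}$ of $\ker(\mB^{\infty}-\mu^{(j)})$ and set $w_{j,k}^{\delta}:=\psi^{\delta}w_{j,k}$ as in Lemma \ref{lemmaEstimateInverseNeg}; the exponential decay of Proposition \ref{proposition decompo eigenfunction Outter} makes these quasimodes of $\mB^{\delta}$ with error $C\exp(-\gamma/\delta)$ and Gram matrix $\mrm{Id}+O(\exp(-\gamma/\delta))$ in $\mL^{2}(\Xi^{\delta})$, so $\mB^{\delta}$ has at least $m_{j}$ eigenvalues in $I_{j}^{\delta}$. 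For the upper bound I would argue by contradiction: if for some sequence $\delta\to 0$ the operator $\mB^{\delta}$ had at least $m_{j}+1$ eigenvalues in $I_{j}^{\delta}$, I pick corresponding orthonormal eigenfunctions; since these eigenvalues are $\approx\mu^{(j)}$, hence $\delta^{2}|\lambda|\ge|\mu^{(j)}|/2$ is bounded below, Lemma \ref{LemmaestimLocalBounded} gives them an exponential localization, so their truncations by $\psi^{\delta}$ form an almost-orthonormal family of $m_{j}+1$ quasimodes of $\mB^{\infty}$ with error $O(\delta)$ and quasimode values converging to $\mu^{(j)}$; the multiplicity lemma then yields $\dim\ker(\mB^{\infty}-\mu^{(j)})\ge m_{j}+1$, a contradiction.

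Combining the three ingredients, for every $K$ the $M_{K}:=m_{1}+\dots+m_{K}$ negative eigenvalues of $\mB^{\delta}$ closest to $0$ are, for $\delta$ small, exactly those lying in $\bigcup_{j\le K}I_{j}^{\delta}$, with exactly $m_{j}$ of them in $I_{j}^{\delta}$. Since the enumeration $0>\mu_{-1}\ge\mu_{-2}\ge\cdots$ of (\ref{eigenvalue unbounded}) lists $\mu^{(1)}$ $m_{1}$ times, then $\mu^{(2)}$ $m_{2}$ times, and so on, while $\delta^{2}\lambda_{-1}^{\delta}\ge\delta^{2}\lambda_{-2}^{\delta}\ge\cdots$ (the negative part of $\mathfrak{S}(\mB^{\delta})$, ordered as in (\ref{spectrum delta})) has the same block structure, matching the two orderings gives $|\delta^{2}\lambda_{-m}^{\delta}-\mu_{-m}|\le C\exp(-\gamma/\delta)$ for all $m\le M_{K}$; letting $K\to\infty$ covers $m=n$, and undoing the scaling yields the theorem. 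The main obstacle is the upper-bound (direct) half of the multiplicity count: one must guarantee that the cut-off eigenfunctions of $\mB^{\delta}$ stay almost orthonormal after truncation, and this is precisely where the exponential localization of Lemma \ref{LemmaestimLocalBounded}, together with the lower bound $\delta^{2}|\lambda_{-n}^{\delta}|\ge|\mu^{(j)}|/2$ extracted from Lemma \ref{EigenValuesBlowUp}, Corollary \ref{BoundednessEigenvalues} and Proposition \ref{propoSecondeEstimNeg}, is indispensable.
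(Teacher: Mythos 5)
Your proof is correct and goes by the same fundamental mechanism as the paper's: scale to $\mB^{\delta}$, combine the inverse and direct reductions (Propositions~\ref{thm approx eigen negative} and~\ref{propoSecondeEstimNeg}) with the boundedness of Corollary~\ref{BoundednessEigenvalues} and the localization of Lemma~\ref{LemmaestimLocalBounded}, and then do multiplicity bookkeeping to match the two enumerations. The difference lies in how the multiplicity count is organized. The paper fixes $n$, supposes $\delta^{2}\lambda_{-n}^{\delta}$ accumulates at some $\mu_{\star}\ne\mu_{-n}$ along a subsequence, reduces to the smallest such $n$, and splits into two cases: $\mu_{\star}>\mu_{-n}$ is handled by the Gohberg--Krein lemma producing a quasimode of $\mB^{\infty}$ orthogonal to the eigenspace of $\mu_{\star}$ (a multiplicity upper bound in disguise), while $\mu_{\star}<\mu_{-n}$ is dispatched by observing that no element of $\mathfrak{S}(\mB^{\delta})$ can then lie near $\mu_{-n}$, contradicting Proposition~\ref{thm approx eigen negative}. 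You instead prove directly, via the multiplicity-counting version of the near-eigenvalue lemma, that for every distinct negative eigenvalue $\mu^{(j)}$ of $\mB^{\infty}$ the operator $\mB^{\delta}$ has \emph{exactly} $m_j$ eigenvalues in the exponentially thin interval $I_j^{\delta}$ and none elsewhere on a fixed window, and then read off the ordered correspondence. Your organization is arguably tighter: the paper's Step~2 invokes only the existence statement of Proposition~\ref{thm approx eigen negative}, which by itself does not exclude the scenario where $\mu_{-n}$ has multiplicity $m\geq 2$, $\delta^{2}\lambda_{-p}^{\delta}\to\mu_{-p}=\mu_{-n}$ for $p<n$ supplies the ``nearby eigenvalue'', and $\delta^{2}\lambda_{-n}^{\delta}$ still slips down to a lower $\mu_{\star}$; your explicit lower-bound count of $m_j$ quasimodes, built from an orthonormal basis of $\ker(\mB^{\infty}-\mu^{(j)})$ and made near-orthonormal by the exponential decay of Proposition~\ref{proposition decompo eigenfunction Outter}, closes this off cleanly. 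Conversely your upper bound by contradiction is, in substance, the same argument as the paper's Step~1 carried out with the multiplicity lemma instead of the Gohberg--Krein theorem. The one thing to make explicit if you were to write this out in full: in the upper-bound step the exponential localization estimate of Lemma~\ref{LemmaestimLocalBounded} must be applied uniformly to all eigenfunctions whose eigenvalues lie in $I_j^{\delta}$, not just to the one of fixed index $-n$; as you note, this is legitimate because $\delta^{2}|\lambda|$ is then trapped near $|\mu^{(j)}|$, hence bounded above and below, and inspection of the proof of Lemma~\ref{LemmaestimLocalBounded} shows the bound depends only on such two-sided control, not on the index.
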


\begin{proof} Pick some arbitrary $n\in\N^{\ast}$. Corollary \ref{BoundednessEigenvalues} ensures that the map $\delta\mapsto\delta^2\lambda_{-n}^{\delta}$ remains bounded on $(0;\delta_0]$ for some $\delta_0>0$. From Proposition \ref{propoSecondeEstimNeg}, we deduce that $\delta\mapsto\delta^2\lambda_{-n}^{\delta}$ is valued in 
$$
\dsp\bigcup_{i=1}^{I_{n}}\ \Big[\mu_{-i}-C\delta^{2}\exp(-\gamma/\delta)\,;\,\mu_{-i}+C\delta^{2}\exp(-\gamma/\delta)\Big],
$$
for some $I_{n}<+\infty$. Therefore, to prove the result of this theorem, it is sufficient to show that for $\delta$ small enough, $\delta\mapsto\delta^2\lambda_{-n}^{\delta}$ does not meet the interval $[\mu_{-i}-C\delta^{2}\exp(-\gamma/\delta);\mu_{-i}+C\delta^{2}\exp(-\gamma/\delta)]$ for $i\ne n$. Let us proceed by contradiction assuming that there exists a subsequence $(\delta_{k})$ such that $\lim_{k\to+\infty}\delta_{k}=0$ and $\lim_{k\to+\infty}\delta_{k}^{2}\lambda_{-n}^{\delta_{k}} =\mu_{\star}\in\mathfrak{S}(\mB^{\infty})\setminus\overline{\R_{+}}$, with $\mu_{\star}\neq \mu_{-n}$. Choosing $n$ closer to $1$ if necessary, one may assume that there holds  $\lim_{\delta\to 0}\delta^{2}\lambda_{-p}^{\delta} = \mu_{-p}$ for all $p=1,2,\dots ,n-1$ when $n\ge2$. In other words, we consider the smallest $n\in\N^{\ast}$ such that the set $\{\delta^2\lambda_{-n}^{\delta}\}_{\delta\in(0;\delta_0]}$ has at least two distinct points of accumulation.\\
\newline
\noindent\textbf{Notation.}
In the remainder of this proof, we will always consider values of $\delta\in 
\{\delta_{k}\}_{k\geq 0}$. To simplify notations, we will drop the subscript ``$k$'' in ``$\delta_{k}$''. This should not bring confusion. On the other hand, in the sequel, the sequence  $(w^{\delta}_{j})_{j\in\mathbb{Z}}$ will refer to an orthonormal Hilbert basis of $\mL^2(\Xi^{\delta})$, where $w^{\delta}_{j}$ is an eigenfunction of $\mrB^{\delta}$ corresponding to $\delta^{2}\lambda_{j}^{\delta}$.\\ 
\newline
\noindent \textbf{Step 1.}
Assume first that $\mu_{\star}>\mu_{-n}$. Let $\mathscr{E}^{\infty}$ refer to the eigenspace of $\mrB^{\infty}$ associated with the eigenvalue  $\mu_{\star}$. This is a finite dimensional space because $\mu_{\star}$ does not belong to the essential spectrum of $\mrB^{\infty}$. We introduce $\mathbb{P}^{\infty}: \mrm{L}^{2}(\R^{3})\to \mrm{L}^{2}(\R^{3})$ the continuous linear map such that $\mrm{Id} - \mathbb{P}^{\infty}$ is the spectral projector of $\mrB^{\infty}$ onto $\mathscr{E}^{\infty}$. On the other hand, we denote $J := \{j\in\N^{\ast}\,
\vert\,\mu_{-j} = \mu_{\star}\}\cup\{n\}$ and we set $\mathscr{F}^{\delta} := \mrm{span}\{\psi^{\delta}w_{j}^{\delta}\,\vert\, j\in J  \}\subset D(\mrm{B}^{\infty})$. Here and in the sequel, $\psi^{\delta}$ and $\chi^{\delta} = 1 - \psi^{\delta}$ are the cut-off functions defined in the proof of Lemma \ref{lemma estimations inverse negative}. Since $\mu_{\star}> \mu_{-n}$, we know that for all $\delta\in(0;\delta_0]$, there holds $\mrm{card}(J)= \mrm{dim}\,\mathscr{E}^{\infty}+1$, and that $\delta^{2}\lambda_{-j}^{\delta}\to\mu_{\star}$ for all $j\in J$ (here, we use the assumption that $n\in\N^{\ast}$ is the smallest index such that $\{\delta^2\lambda_{-n}^{\delta}\}_{\delta\in(0;\delta_0]}$ has at least two distinct points of accumulation). Moreover, remembering that $(w^{\delta}_{-j})_{j\in\mathbb{Z}}$ is an orthonormal family and observing that $\lim_{\delta\to 0}\Vert \chi^{\delta} w^{\delta}_{-j}\Vert_{\R^{3}} = 0$ (use the same arguments as in the proof of Lemma \ref{lemma estimations inverse negative}), one can show that  $(\psi^{\delta}w^{\delta}_{-j})_{j\in J}$ is linearly independent for $\delta$ small enough. This implies $\mrm{dim}\,\mathscr{F}^{\delta} > \mrm{dim}\,\mathscr{E}^{\infty}$. 
As a consequence, according to \cite[Thm 1.1]{GoKr60}, there exists $v^{\delta}\in \mathscr{F}^{\delta}$ such that 
$\Vert v^{\delta}\Vert_{\R^{3}} = 1$ and $\mrm{dist}(v^{\delta},\mathscr{E}^{\infty} ):=
\inf\{\Vert v^{\delta} - w\Vert_{\R^{3}}\;\vert\; w\in \mathscr{E}^{\infty}\} = 1$,
which can be rewritten $\Vert \mathbb{P}^{\infty} v^{\delta}\Vert_{\R^{3}} = 1$, for $\delta>0$ small enough. Using again that $\lim_{\delta\to 0}\Vert \chi^{\delta} w^{\delta}_{-j}\Vert_{\R^{3}} = 0$, we see that in the decomposition 
$$
v^{\delta} = \sum_{j\in J}
\alpha_{j}^{\delta}\,\psi^{\delta}w_{-j}^{\delta},
$$ 
where $\alpha_{j}^{\delta}\in \mathbb{C}$, there holds $ \sum_{j\in J}\vert\alpha_{j}^{\delta}\vert^{2}\to1$ when $\delta\to0$. Denoting $C = (\,\mrm{dist}(\mu_{\star}, \mathfrak{S}(\mrB^{\infty})\setminus\{\mu_{\star}\})\,)^{-2}$, we can write
$$
\begin{array}{lcl}
1 = \Vert \mathbb{P}^{\infty} v^{\delta}\Vert_{\R^{3}}^{2} & \leq & C\,\Vert (\mrm{B}^{\infty}- \mu_{\star}\mrm{Id})\mathbb{P}^{\infty} v^{\delta}\Vert_{\R^{3}}^{2}\\[5pt]

& \leq & C\,\Vert (\mrm{B}^{\infty}- \mu_{\star}\mrm{Id}) v^{\delta}\Vert_{\R^{3}}^{2}\\[5pt]

& \leq & C'\, \sum_{j\in J}\vert \alpha_{j}^{\delta}\vert^{2}\, 
\Vert (\mrm{B}^{\infty} - \mu_{\star}\mrm{Id}) \psi^{\delta}w_{-j}^{\delta}\Vert_{\R^{3}}^{2}  \\[5pt]

& \leq & C'\, \sum_{j\in J}\vert \alpha_{j}^{\delta}\vert^{2}\,  \big(\,\Vert (\mrm{B}^{\infty} - \delta^{2}\lambda_{-j}^{\delta}\mrm{Id}) 
\psi^{\delta}w_{-j}^{\delta}\Vert_{\R^{3}}^{2}  + \vert \mu_{\star}-\delta^{2}\lambda_{-j}^{\delta}\vert^{2}\Vert\psi^{\delta}w_{-j}^{\delta}\Vert_{\R^{3}}^{2}\,\big).

\end{array}
$$
We know, by assumption, that $\lim_{\delta\to 0}\vert \mu_{\star}-\delta^{2}\lambda_{-j}^{\delta}\vert =  0$ for each $j\in J$. Moreover, we have shown in the proof of Lemma \ref{lemma estimations inverse negative} that 
$\lim_{\delta\to 0}\Vert (\mrm{B}^{\infty} - \delta^{2}\lambda_{-j}^{\delta}\mrm{Id}) \psi^{\delta}w_{-j}^{\delta}\Vert_{\R^{3}}=0$. 
As a consequence, the inequality written above leads to a contradiction, which concludes the first step of the proof.\\
\newline
\textbf{Step 2.} The only remaining possibility is that $\mu_{\star}<\mu_{-n}$. But recall that we have $\delta^{2}\lambda_{-j}^{\delta}\le \delta^{2}\lambda_{-n}^{\delta}$ for all $j\ge n$. On the other hand, there holds $\lim_{\delta\to 0}\delta^{2}\lambda_{-p}^{\delta} = \mu_{-p}$ for all $p=1,2,\dots ,n-1$ (at least, when $n\ge2$). Therefore, $\mu_{\star}<\mu_{-n}$ implies that for $\delta_0$ small enough, we have $\{\delta^2\lambda_{-n}^{\delta}\}_{\delta\in(0;\delta_0]}\cap [\mu_{-n}-\delta;\mu_{-n}+\delta]=\emptyset$. This is in contradiction with Proposition \ref{thm approx eigen negative}. We conclude that the case $\mu_{\star}<\mu_{-n}$ is not possible either, which finishes the proof. 
 
\end{proof}

\section{Numerical experiments in 2D}\label{Numerics}

\noindent \begin{wrapfigure}{r}{0.6\textwidth}
\centering\includegraphics{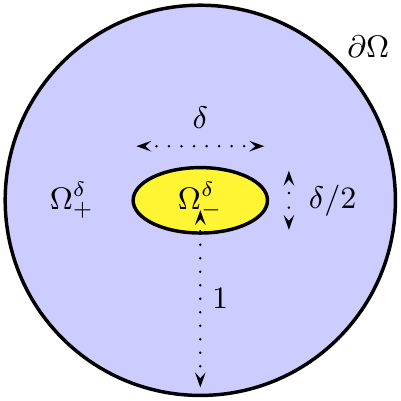}\vspace{-0.2cm}
\caption{Geometry of the domains.\label{geom part}}
\end{wrapfigure}
\noindent In this section, we approximate numerically the spectrum of Problem (\ref{ExPb}) set in a 2D domain. We consider a 2D configuration for computational 
reasons, the number of degrees of freedom being too large in 3D. Although the analysis is slightly more involved, we can prove that all the results we 
have established in 3D for the spectrum of $\mA^{\delta}$ also hold in 2D. We shall keep the same notations. First, we detail the parameters used for the 
numerical experiments. Let $\Om\subset\R^2$ be the disk of radius one centered at $O$. Define the ellipse $\Xi_-:=\{(x,y)\in\R^2\,|\,(x/a)^2+(y/b)^2<1\}$ 
with $a=1/2$ and $b=1/4$. For $\delta\in(0;1]$, set $\Om_{-}^{\delta}:=\delta\,\Xi_-$ and $\Om_{+}^{\delta}  := \Om\setminus\overline{\Om}\!\,_{-}^{\delta}$ (see Figure 
\ref{geom part}). Introduce the function $\sigma^{\delta}:\Omega\to \R$ such that $\sigma^{\delta} = \sigma_{\pm}$ in $\Omega_{\pm}^{\delta}$, 
where $\sigma_{+}=1$ and where $\sigma_{-}<0$ is a constant. We are interested in the 2D version of the eigenvalue problem (\ref{ExPb}) whose variational formulation writes
\begin{equation}\label{ExPb canon}
\begin{array}{|l}
\mbox{Find }(\lambda^{\delta},u^{\delta})\in\Cplx\times(\mH^{1}_{0}(\Om)\setminus\{0\})\mbox{ such that}\\[4pt]
(\sigma^{\delta}\nabla u^{\delta},\nabla v)_{\Om} = \lambda^{\delta}(u^{\delta},v)_{\Om},\qquad \forall v\in\mH^{1}_{0}(\Om).
\end{array}
\end{equation}

\noindent Let us explain how to choose $\sigma_{-}$ so that Assumption \ref{assumption1} holds. Using \cite[Prop 8]{KhPS07}, when the contrast 
$\kappa_{\sigma}=\sigma_{-}/\sigma_{+}<0$ verifies $\kappa_{\sigma}\ne-1$, we can prove that the near field operator $\mB^{\infty}$ defined in (\ref{NFOp}) 
is injective  if and only if \footnote{Notice that when $\Xi_-$ is a disk, \textit{i.e.} when $a=b$, the set $\mathscr{S}$ reduces to $\{-1\}$.} 
\[
\kappa_{\sigma}\notin \mathscr{S}:=\{\eta_k\}_{k=1}^{+\infty}\cup\{1/\eta_k\}_{k=1}^{+\infty},\qquad\mbox{where}\quad\eta_k:=\dsp\frac{|a-b|^k+|a+b|^k}{|a-b|^k-|a+b|^k}.
\]
It is straightforward to check that $(\eta_k)_{k}$ is a sequence of numbers smaller than $-1$, strictly increasing, which accumulates in $-1$. 
Moreover, for the selected geometry, we have $\eta_1=-a/b=-2$. In order to satisfy Assumption \ref{assumption1}, we will impose $\sigma_{-}=-2.5$ so 
that $\kappa_{\sigma}$ avoids the critical set $\mathscr{S}$. Now, we discretize Problem (\ref{ExPb canon}). Let us consider $\Om_h$, $\Om_{\pm, h}^{\delta}$ polygonal approximations of the domains $\Om$, $\Om_{\pm}^{\delta}$ satisfying $\overline{\Om_h}=\overline{\Om_{+, h}^{\delta}}\cup\overline{\Om_{-, h}^{\delta}}$. We denote $\sigma^{\delta}_h:\Om_h\to \R$ the function such that $\sigma^{\delta}_h = \sigma_{\pm}$ in $\Omega_{\pm,h}^{\delta}$. Introduce $(\mathcal{T}^{\delta}_h)_h$ a shape regular family of triangulations of $\overline{\Omega_h}$, made of triangles. Here, $h$ refer to the mesh size. Moreover, we assume that, for 
any triangle $\tau$, one has either $\tau\subset \overline{\Omega^{\delta}_{+,h}}$ or $\tau\subset \overline{\Omega^{\delta}_{-,h}}$. Define the family of finite 
element spaces
\[
\mV^{\delta}_h := \left\{v\in \mH_{0}^{1}(\Om^{\delta}_h)\mbox{ such that }v|_{\tau}\in \mathbb{P}_1(\tau)\mbox{ for all }\tau\in\mathcal{T}^{\delta}_h\right\},
\]
where $\mathbb{P}_1(\tau)$ is the space of polynomials of degree at most $1$ on the triangle $\tau$. Let us consider the problem
\begin{equation}\label{Pb h}
\begin{array}{|l}
\dsp{ \textrm{Find}\;(\lambda^{\delta}_h,u^{\delta}_h)\in\Cplx\times(\mV^{\delta}_h\setminus\{0\})\mbox{ such that}\quad}\\[6pt]
(\sigma_h^{\delta} \nabla u^{\delta}_h,\nabla v^{\delta}_h)_{\Om^{\delta}_h} = \lambda^{\delta}_h(u^{\delta}_h,v^{\delta}_h)_{\Om^{\delta}_h},\quad
\forall v^{\delta}_h\in\mV^{\delta}_h.
\end{array}
\end{equation}
For details concerning the discretization process, we refer the reader to \cite{BoCZ10,NiVe11,ChCiAc}. Nevertheless, we should emphasize that in these three papers, only the source term problem 
associated with (\ref{ExPb canon}) is considered. To this day, it seems that there exists no proof of convergence (as the mesh size $h$ tends to zero) of the standard $\mathbb{P}_1$ Lagrange finite element method to approximate the spectral Problem (\ref{ExPb canon}). It stems from the fact that the sign-changing of $\sigma^{\delta}$ prevents the use of usual approaches. However, from a practical point of view, for a given $\delta$, we observe no particular phenomena due to the presence of negative material and the eigenvalues look correctly approximated as $h\to0$ (see Table \ref{Table1}). For the computations, we use the \textit{FreeFem++}\footnote{\textit{FreeFem++}, \url{http://www.freefem.org/ff++/}.} software while we display the results with \textit{Matlab}\footnote{\textit{Matlab}, \url{http://www.mathworks.se/}.} and \textit{Paraview}\footnote{\textit{Paraview}, 
\url{http://www.paraview.org/}.}.
\begin{table}[!ht]
\centering
\renewcommand{\arraystretch}{1.5}\begin{tabular}{|c|c|c|}
\hline
mesh size $h$ & $\lambda_{h,+1}^{\delta}$ & $\lambda_{h,-1}^{\delta}$ \\
\hline
0.0856833 & 5.90764 & -3.33104 \\
\hline
0.041797  & 5.85259 & -4.47873 \\
\hline
0.0214048 & 5.8386 & -4.80557 \\
\hline
0.00932046 & 5.83554 & -4.87614 \\
\hline
0.0061639 & 5.83511 & -4.88572\\
\hline
\end{tabular}
\caption{First positive and negative eigenvalues for several meshes with $\delta = 0.5$. This suggests a convergence of eigenvalues in $O(h^{2})$ toward their limit, which is in good agreement with the rate of convergence 
predicted by the Babuska-Osborn theory \cite{Babuska1991641} for the case of $\mathbb{P}_1$ discretization of eigenvalue problems associated to compact self-adjoint operators.\label{Table1}}
\end{table}

\noindent In the next numerical experiments, we shall fix the mesh size $h$ so that the number of triangles inside the small inclusion $\Omega^{\delta}_{-,h}$ remains approximately constant. As a consequence, when $\delta$ goes to zero, the mesh contains more and more triangles. We make this choice so as to capture 
the localized eigenfunctions associated with the negative eigenvalues (see Figure \ref{first eigenfunctions}).\\
\newline 
In Figure \ref{PositiveSpectrum}, we display the positive  eigenvalues of smallest modulus of Problem (\ref{Pb h}) with respect to $-\mrm{log_{10}}\,\delta$. 
The dotted lines represent the approximation of the eigenvalues of smallest modulus of the limit operator $\mA^0$ defined in (\ref{FFOp}). In other words, 
these dotted lines correspond to the spectrum of the problem without the inclusion of negative material. We observe that $\spec(\mA^{\delta})$ seems to 
converge to $\spec(\mA^{0})$ when the inclusion shrinks. This is in accordance with the analog of Theorem \ref{thmMajorPos} in 2D.\\
\newline
In Figure \ref{NegativeSpectrum}, we present the behaviour of the negative eigenvalues of smallest modulus of Problem (\ref{Pb h}) with respect to 
$-\mrm{log_{10}}\,\delta$. The numerical experiment suggests that the negative eigenvalues of the operator $\mA^{\delta}$ tend to $-\infty$ when $\delta$ 
goes to zero. This was established in 3D in Lemma \ref{EigenValuesBlowUp}. Figure \ref{NegativeSpectrumRegression} confirms and clarifies this result: 
it indicates that the negative eigenvalues of Problem (\ref{Pb h}) behave like $\delta^{-2}\mu$, for some constant $\mu<0$, when $\delta\to0$. This is coherent 
with the 2D version of Theorem \ref{NegSpectConv}.\\
\newline
Eventually, on Figure \ref{first eigenfunctions}, we display the eigenfunctions associated with the negative and positive eigenvalues of smallest modulus for 
$\delta=0.5$ and for $\delta=0.05$. The eigenfunction associated with the smallest positive eigenvalue appears independent of the size of the small inclusion. 
On the contrary, we can observe the localization effect for the eigenfunction associated with the larger negative eigenvalue: when $\delta$ tends to zero, 
this eigenfunction is more and more concentrated around the inclusion of negative material.

\begin{figure}[!ht]
\centering
\includegraphics[scale=0.65]{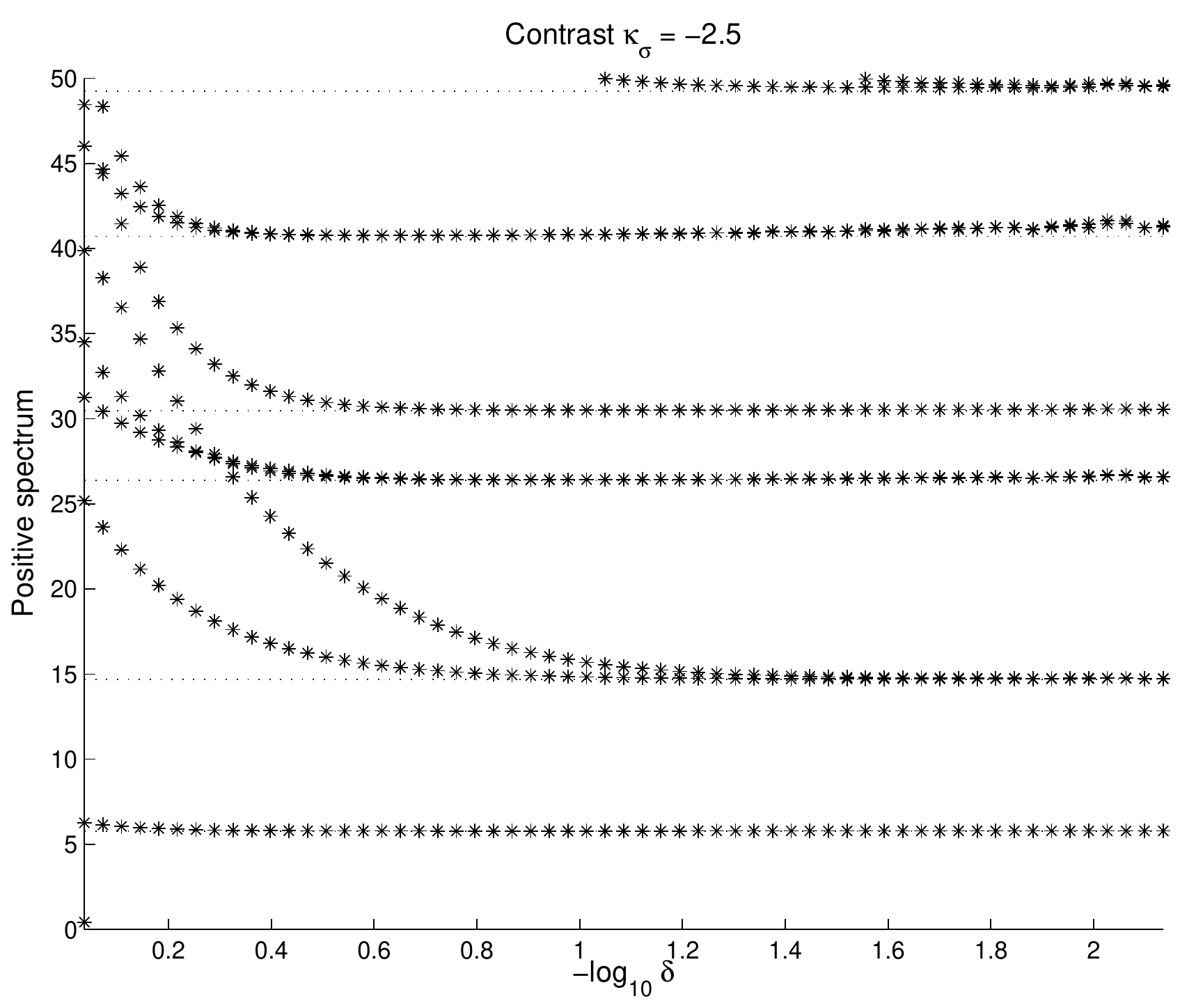}
\caption{For a given $\delta\in(0;1]$, we approximate the eigenvalues of smallest modulus of the operator $\mA^{\delta}$. 
Then, we make $\delta$ tend to zero. The figure represents the approximation of the positive spectrum of $\mA^{\delta}$ with 
respect to $-\mrm{log_{10}}\,\delta$. The dotted lines correspond to the approximation of the eigenvalues of smallest modulus of the limit operator $\mA^0$.}
\label{PositiveSpectrum}
\end{figure}

\begin{figure}[!ht]
\centering
\includegraphics[scale=0.65]{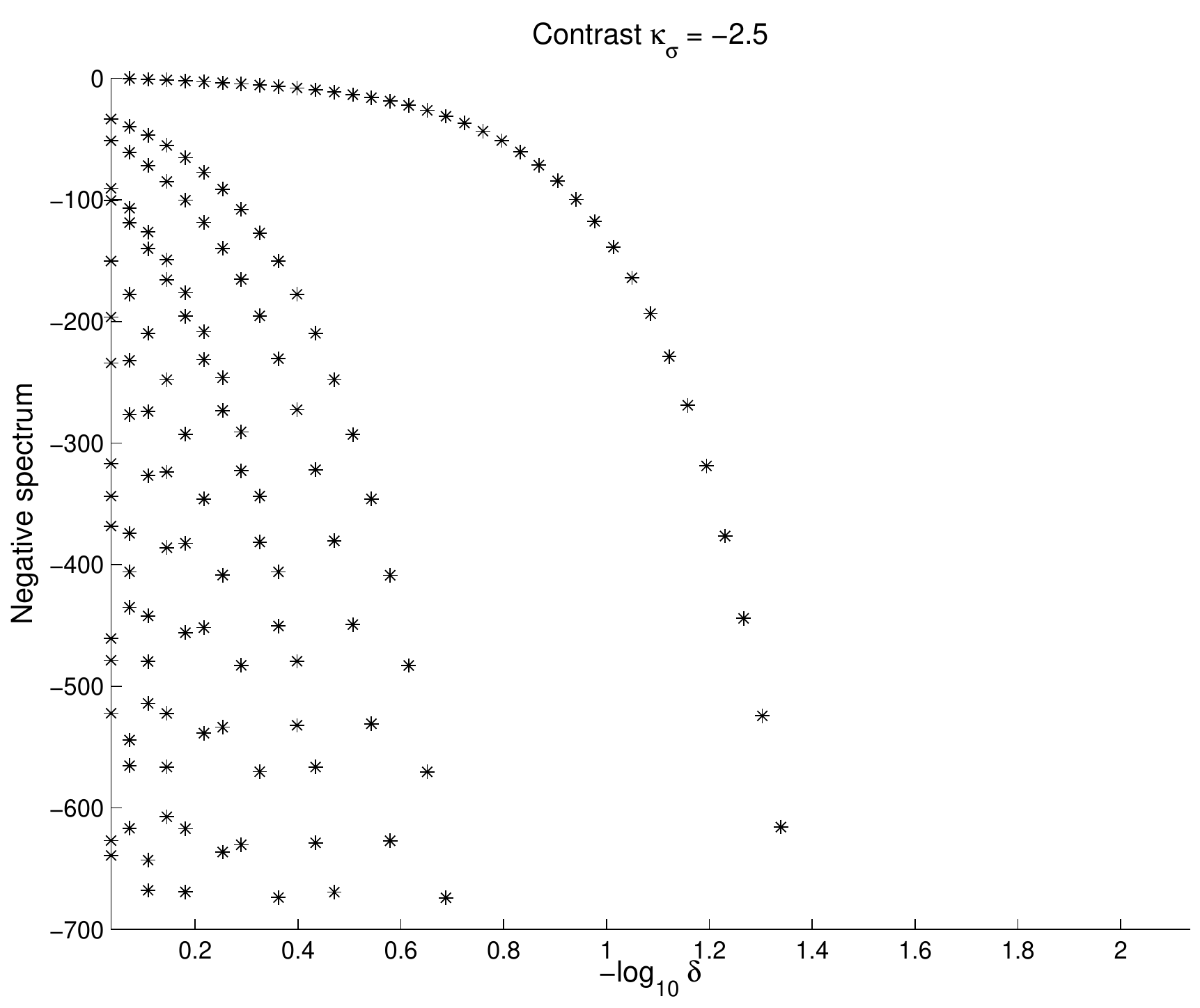}
\caption{For a given $\delta\in(0,1]$, we approximate the eigenvalues of smallest modulus of the operator $\mA^{\delta}$. 
Then, we make $\delta$ tend to zero. The figure represents the approximation of the negative spectrum of $\mA^{\delta}$ with respect to $-\mrm{log_{10}}\,\delta$.}
\label{NegativeSpectrum}
\end{figure}

\begin{figure}[!ht]
\centering
\includegraphics[scale=0.65]{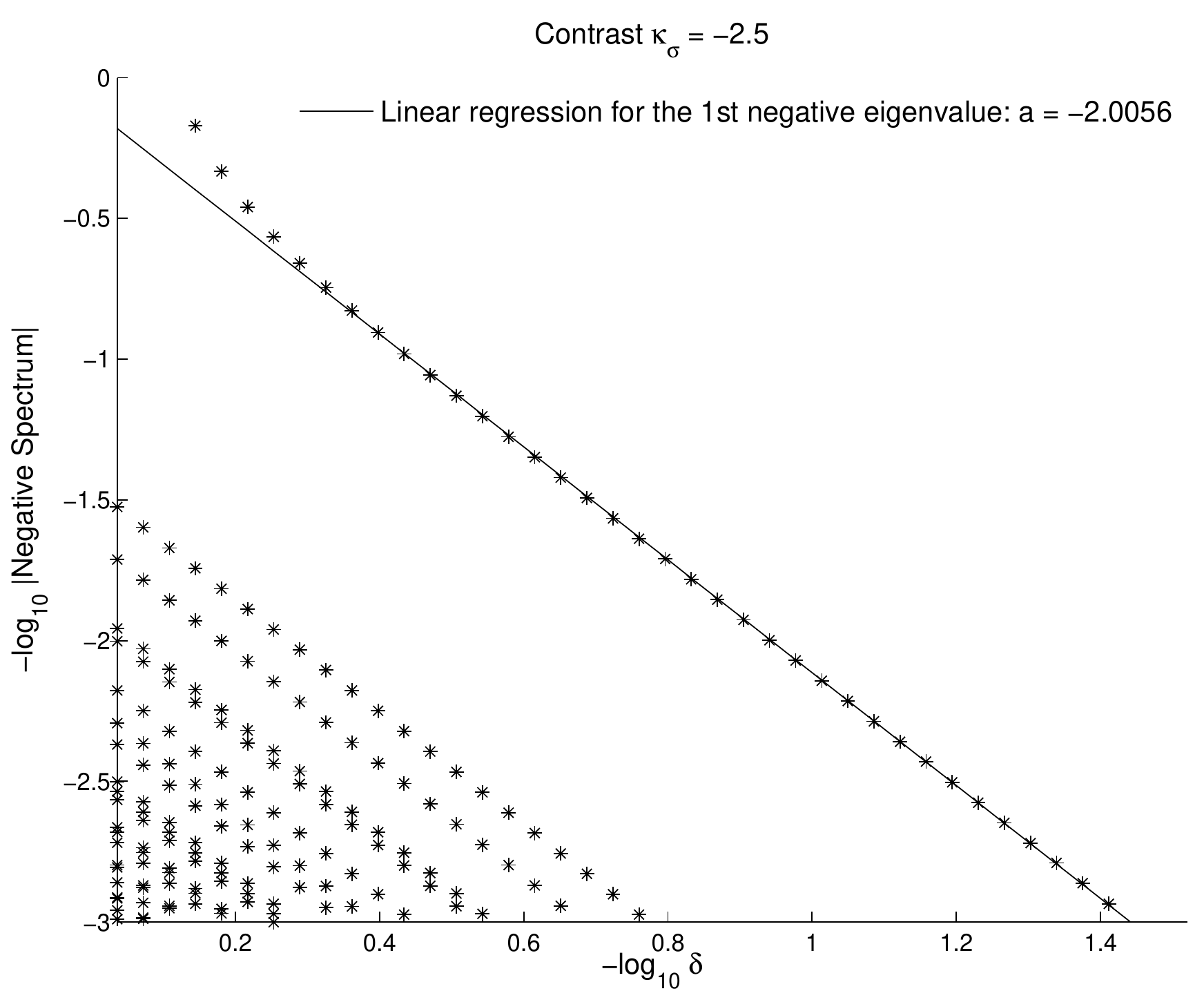}
\caption{The figure represents the approximation of the negative eigenvalues of smallest modulus of $\mA^{\delta}$ in logarithmic scale.}
\label{NegativeSpectrumRegression}
\end{figure}

\begin{figure}[!ht]
\centering
\begin{tabular}{cc}
\includegraphics[scale=0.06]{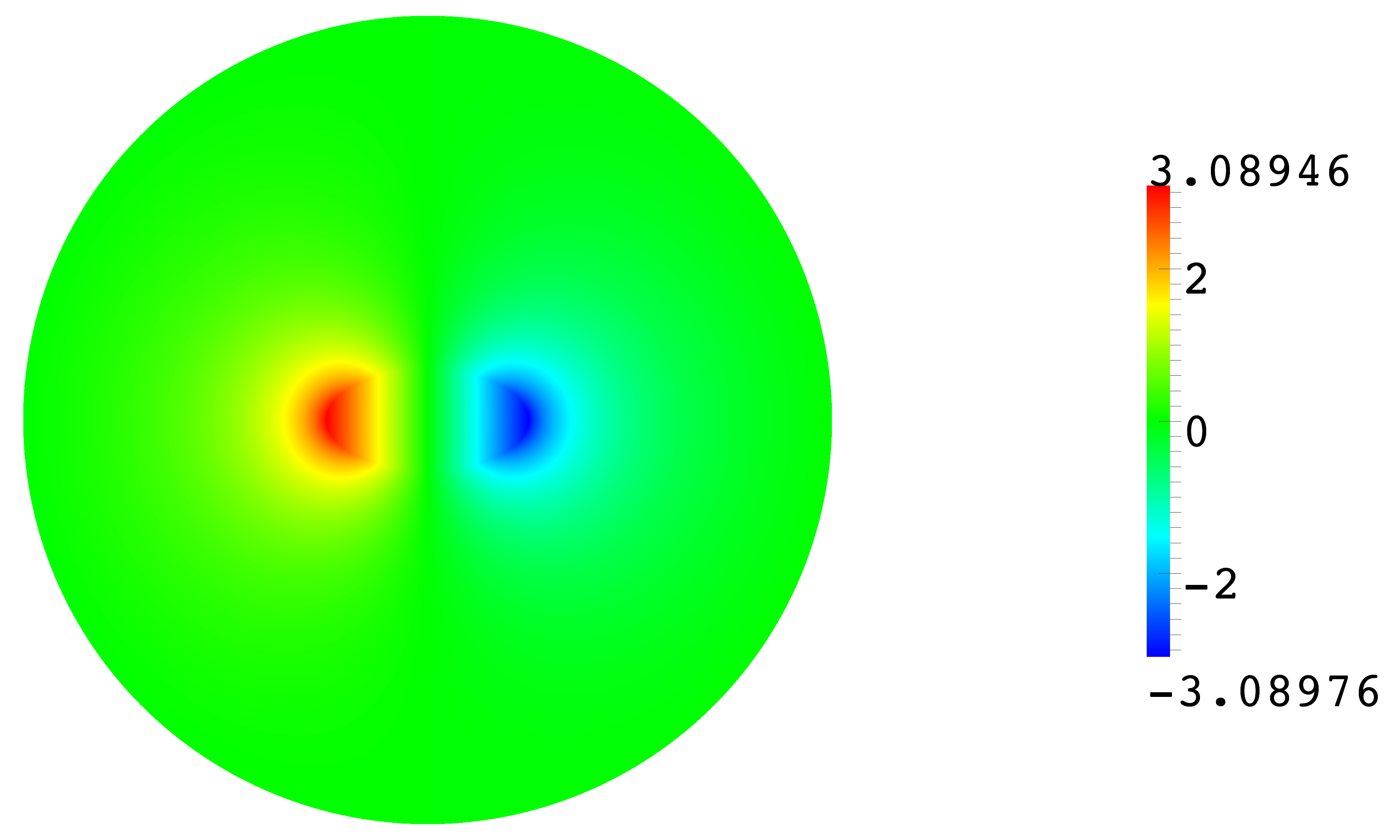}\quad&\quad
\includegraphics[scale=0.06]{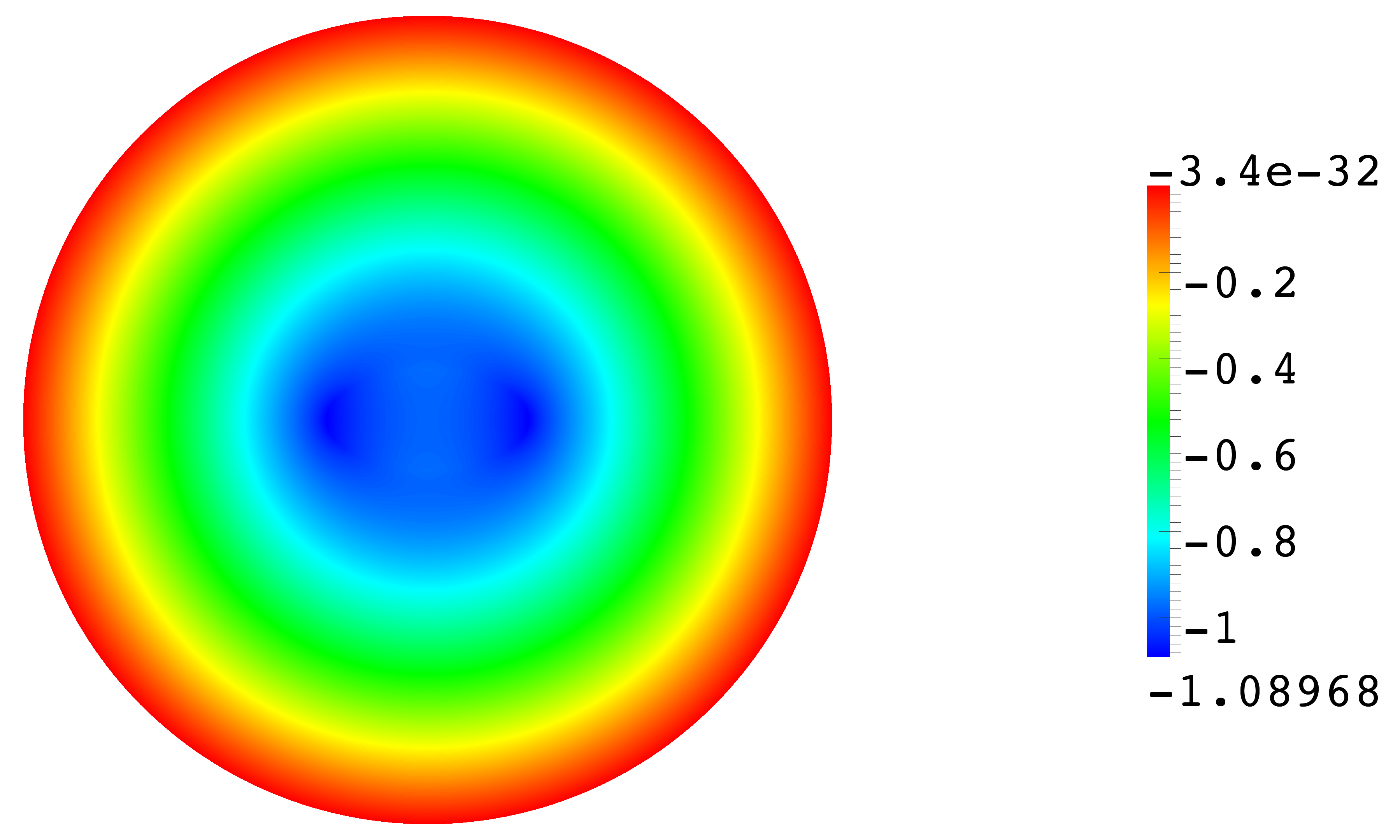}\\[6pt]
\includegraphics[scale=0.06]{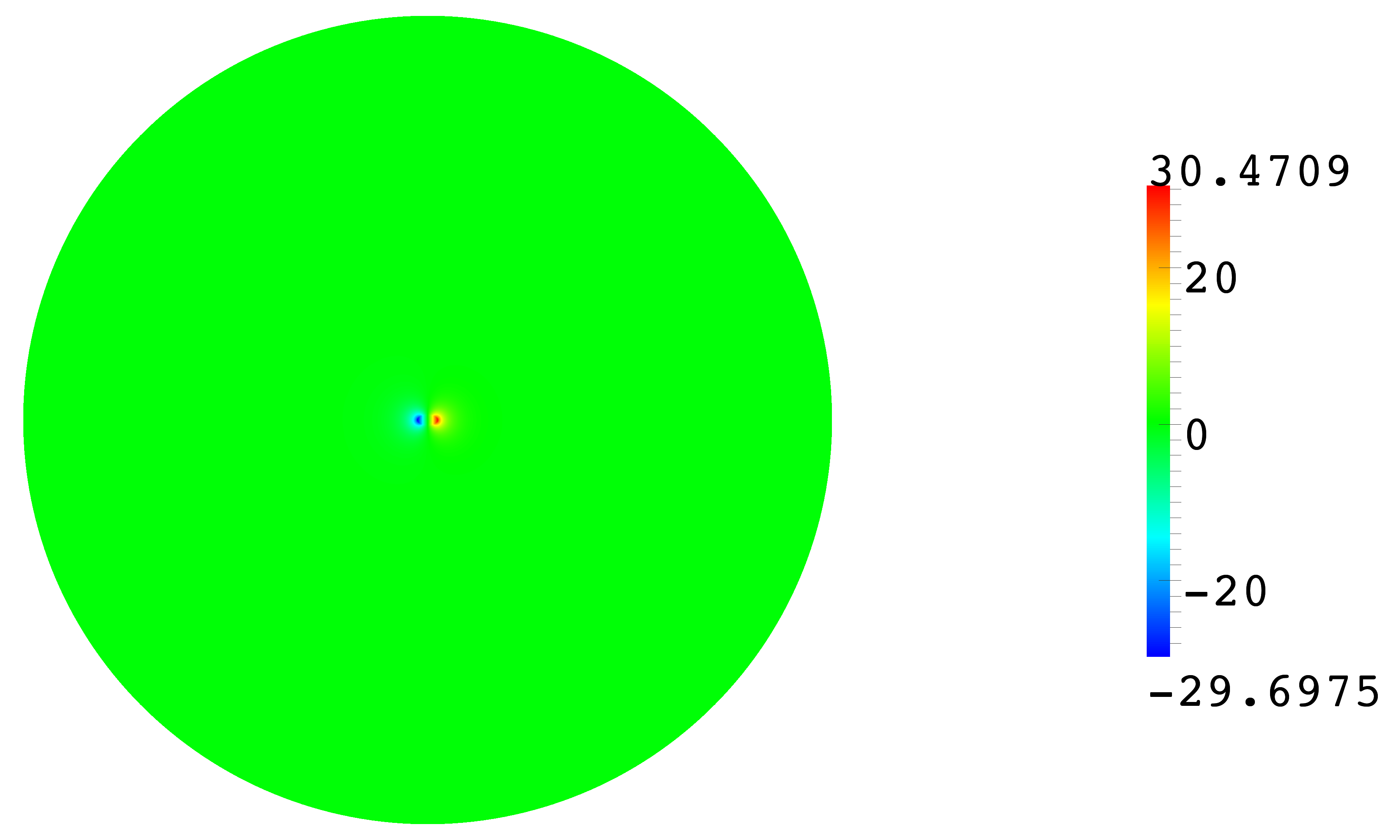}\quad&\quad
\includegraphics[scale=0.06]{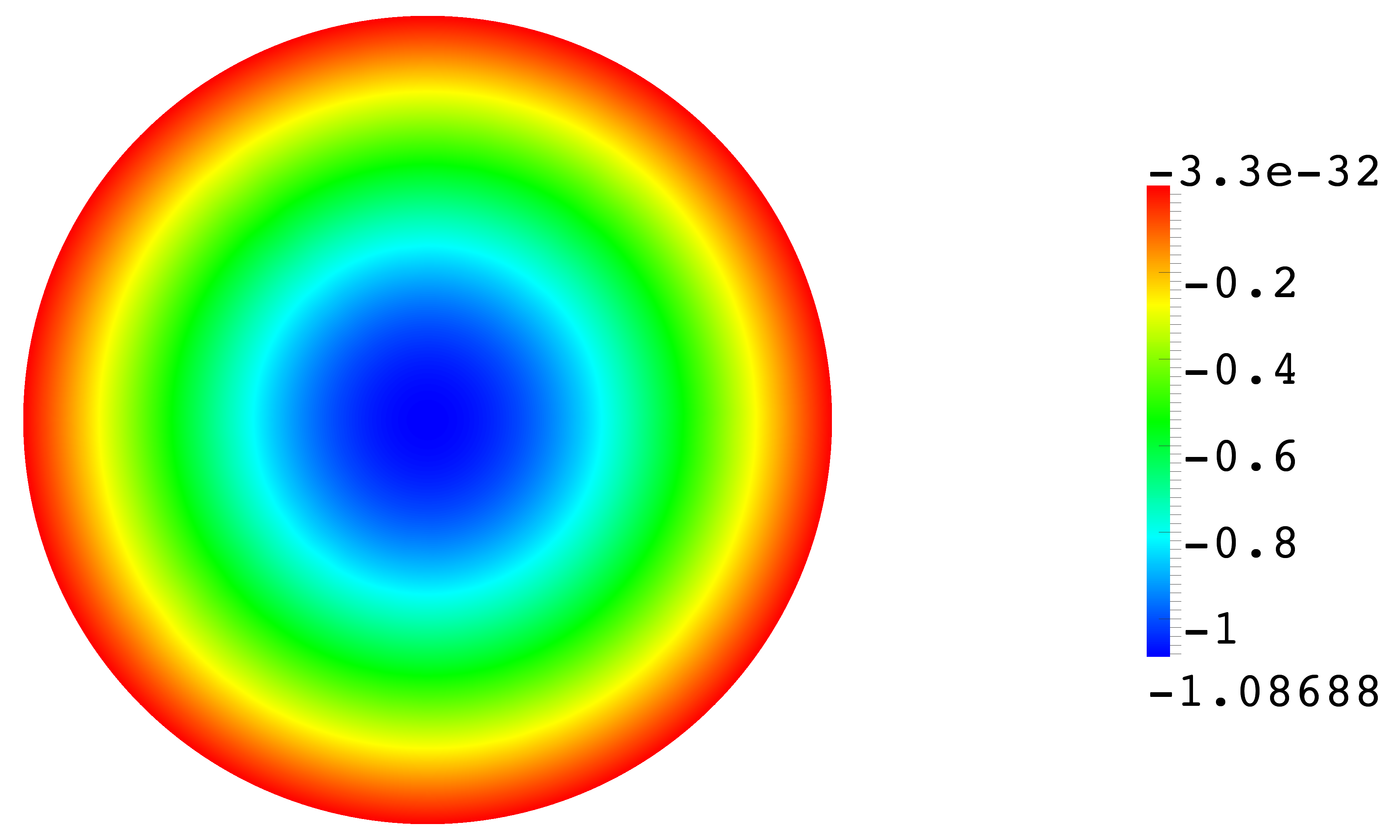}
\end{tabular}
\caption{On the left (resp. on the right), approximation of the eigenfunction associated with the negative (resp. positive) 
eigenvalue of smallest modulus. Above, $\delta=0.5$. Below, $\delta=0.05$. The contrast $\kappa_{\sigma}=\sigma_{-}/\sigma_{+}$ 
is chosen equal to $-2.5$. We observe clearly the localization effect for the eigenfunctions associated with the negative eigenvalues. 
\label{first eigenfunctions}}
\end{figure}

~
\newpage
~

\section*{Appendix} 
\noindent In this appendix, we briefly recall an elementary result of spectral theory that we used three times in this article in order to estimate the distance of a number to the spectrum of an operator.
We provide a proof for the sake of completeness.

\begin{lemma}\label{EstimEigenVal}
Let $\mrm{H}$, equipped with the inner product $(\cdot,\cdot)_{\mrm{H}}$ and the norm $\Vert \cdot\Vert_{\mrm{H}}$, be a Hilbert space. 
For any (\textit{a priori} unbounded) normal linear operator $\mrm{A}: D(\mrm{A})\subset \mrm{H}
\to\mrm{H}$ we have 
$$
\inf_{\lambda\in \mathfrak{S}(\mrm{A})}\vert \lambda - \mu\vert\leq \inf_{v\in D(\mA)\setminus\{0\}}
\frac{\Vert \mA v-\mu v \Vert_{\mrm{H}}}{\Vert v\Vert_{\mrm{H}}},\qquad\forall \mu\in \mathbb{C}.
$$
\end{lemma}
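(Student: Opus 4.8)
The plan is to argue by contraposition using the spectral theorem for normal operators. I would set $d := \inf_{\lambda\in\mathfrak{S}(\mrm{A})}|\lambda-\mu|$; if $d=0$ there is nothing to prove, so I assume $d>0$ and aim to establish the stronger pointwise bound $\|\mrm{A}v-\mu v\|_{\mrm{H}}\ge d\,\|v\|_{\mrm{H}}$ for every $v\in D(\mrm{A})$, which is clearly equivalent to the asserted inequality.

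First I would invoke the spectral theorem: since $\mrm{A}$ is normal, there is a projection-valued measure $E$ on the Borel subsets of $\mathbb{C}$, supported on $\mathfrak{S}(\mrm{A})$, with $\mrm{A}=\int_{\mathbb{C}}z\,dE(z)$ and $D(\mrm{A})=\{v\in\mrm{H}\,:\,\int_{\mathbb{C}}|z|^2\,d(E(z)v,v)_{\mrm{H}}<+\infty\}$. For $v\in D(\mrm{A})$ the functional calculus then gives
$$
\|\mrm{A}v-\mu v\|_{\mrm{H}}^2=\int_{\mathfrak{S}(\mrm{A})}|z-\mu|^2\,d(E(z)v,v)_{\mrm{H}}\ \ge\ d^{2}\int_{\mathfrak{S}(\mrm{A})}d(E(z)v,v)_{\mrm{H}}=d^{2}\,\|v\|_{\mrm{H}}^{2},
$$
where the inequality uses that $|z-\mu|\ge d$ on $\mathrm{supp}\,E\subset\mathfrak{S}(\mrm{A})$, and the last equality uses $E(\mathfrak{S}(\mrm{A}))=\mrm{Id}$. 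Dividing by $\|v\|_{\mrm{H}}^2$ and taking square roots yields the claim.

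I do not expect any real obstacle here; the only point requiring a little care is the correct formulation of the spectral theorem in the possibly unbounded normal case (and, implicitly, that $\mathfrak{S}(\mrm{A})\ne\emptyset$ for a normal operator on a nonzero space, so that $d$ is a genuine infimum). As an alternative avoiding the functional calculus, one may recall that for a normal operator and $\mu\notin\mathfrak{S}(\mrm{A})$ one has $\|(\mrm{A}-\mu\,\mrm{Id})^{-1}\|=1/\mathrm{dist}(\mu,\mathfrak{S}(\mrm{A}))$, whence $\|v\|_{\mrm{H}}=\|(\mrm{A}-\mu\,\mrm{Id})^{-1}(\mrm{A}-\mu\,\mrm{Id})v\|_{\mrm{H}}\le d^{-1}\|(\mrm{A}-\mu\,\mrm{Id})v\|_{\mrm{H}}$ for all $v\in D(\mrm{A})$, the same conclusion. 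Since in all three applications $\mrm{A}$ is actually self-adjoint, one could even work with the spectral measure supported on $\mathbb{R}$ and invoke only the self-adjoint spectral theorem.
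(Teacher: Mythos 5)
Your argument is correct and follows essentially the same route as the paper's own proof: both invoke the spectral theorem for normal operators to write $\Vert \mrm{A}v-\mu v\Vert_{\mrm{H}}^2=\int_{\mathfrak{S}(\mrm{A})}\vert\zeta-\mu\vert^2\,d\mrm{E}_{v,v}(\zeta)$ and then bound the integrand below by $\inf_{\lambda\in\mathfrak{S}(\mrm{A})}\vert\lambda-\mu\vert^2$. Your remarks on the $d=0$ case, the nonemptiness of the spectrum, and the alternative via the resolvent norm identity are sound but inessential supplements to the same core argument.
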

\begin{proof}
Since $\mA$ is normal then, according to the spectral theorem \cite[Thm 6.6.1]{BiSo87}, it admits a spectral 
decomposition $\mA = \int_{\mathfrak{S}(\mA)}\zeta d\mrm{E}(\zeta)$ where $\mrm{E}(\zeta)$ refers to a spectral 
measure on $\mrm{H}$. Let $d\mrm{E}_{v,v}$ refer to the measure associated with $\zeta\mapsto 
(\mrm{E}(\zeta)v,v)_{\mrm{H}}$. A spectral decomposition of  $\mA-\mu\mrm{Id}$ is given by
$\mA-\mu\mrm{Id} = \int_{\mathfrak{S}(\mA)}(\zeta - \mu) d\mrm{E}(\zeta)$. Moreover the formula 
$\Vert \mA v-\mu v\Vert_{\mrm{H}}^{2} = \int_{\mathfrak{S}(\mA)}\vert\zeta - \mu\vert^{2} d\mrm{E}(\zeta)$
holds for any $v\in D(\mA)$. As a consequence, we have  
$$
\Vert v\Vert_{\mrm{H}}^{2}\inf_{\lambda\in \mathfrak{S}(\mrm{A})}\vert \lambda - \mu\vert^{2} = 
\inf_{\lambda\in \mathfrak{S}(\mrm{A})}\vert \lambda - \mu\vert^{2}\int_{\mathfrak{S}(\mA)}d\mrm{E}_{v,v}(\zeta)\leq 
\int_{\mathfrak{S}(\mA)}\vert \zeta - \mu\vert^{2}d\mrm{E}_{v,v}(\zeta) = \Vert \mA v-\mu v\Vert_{\mrm{H}}^{2}.
$$
Since this holds for any $v\in D(\mA)$, we can divide by $\Vert v\Vert_{\mrm{H}}^{2}$ and take the $\inf$ in the 
right hand side of the estimate above, which yields the desired inequality.
\end{proof}

\section*{Acknowledgments} 
\noindent The research of the two first authors is supported by the ANR project METAMATH, grant 
ANR-11-MONU-016 of the French Agence Nationale de la Recherche. The work of the first 
author is also partially supported by the Academy of Finland (decision 140998) and by the FMJH through the grant ANR-10-CAMP-0151-02 in the ``Programme des Investissements d'Avenir''. The research of 
the third author is supported by the Russian Foundation for Basic Research, grant No. 12-01-00348.

\bibliography{Bibli}
\bibliographystyle{plain}

\end{document}